\renewcommand{\thesection}{\arabic{section}}	
\numberwithin{equation}{section}
\DeclareMathOperator{\Eig}{Eig}		
\DeclareMathOperator{\ev}{ev}			
\DeclareMathOperator{\ind}{ind}			
\DeclareMathOperator{\lip}{lip}			
\DeclareMathOperator{\rk}{rk}			
\newcommand{\I}{{\mathbb I}}
\newcommand{\N}{{\mathbb N}}
\newcommand{\R}{{\mathbb R}}
\newcommand{\Z}{{\mathbb Z}}
\newcommand{\sF}{{\mathscr F}}
\newcommand{\sG}{{\mathscr G}}
\newcommand{\sL}{{\mathscr L}}
\newcommand{\sM}{{\mathscr M}}
\newcommand{\sS}{{\mathscr S}}
\newcommand{\fB}{{\mathfrak B}}
\newcommand{\fH}{{\mathfrak H}}
\newcommand{\eC}{{\mathscr C}}
\newcommand{\eL}{{\mathscr L}}
\newcommand{\eW}{{\mathscr W}}
\newcommand{\vphi}{\varphi}
\newcommand{\eps}{\varepsilon}
\newcommand{\tm}{\times}
\renewcommand{\leq}{\leqslant}
\renewcommand{\top}[1]{\xrightarrow[#1]{p}}
\newcommand{\intoo}[1]{\left(#1\right)}				
\newcommand{\intcc}[1]{\left[#1\right]}				
\newcommand{\set}[1]{\left\{#1\right\}}				
\newcommand{\abs}[1]{\left|#1\right|}				
\newcommand{\norm} [1]{\left\|#1\right\|}			
\newcommand{\fall}{\quad\text{for all }}
\newcommand{\on}{\quad\text{on }}
\newcommand{\cref}[1]{Cor.~\ref{#1}}
\newcommand{\eref}[1]{Example~\ref{#1}}
\newcommand{\fref}[1]{Fig.~\ref{#1}}
\newcommand{\lref}[1]{Lemma~\ref{#1}}
\newcommand{\pref}[1]{Prop.~\ref{#1}}
\newcommand{\rref}[1]{Rem.~\ref{#1}}
\newcommand{\tref}[1]{Thm.~\ref{#1}}
\theoremstyle{plain}
\newtheorem{thm}{Theorem}[section]
\newtheorem{lem}[thm]{Lemma}
\newtheorem{prop}[thm]{Proposition}
\newtheorem*{hyp}{Hypothesis}
\theoremstyle{definition}
\newtheorem{exam}[thm]{Example}
\newtheorem{rem}[thm]{Remark}
\begin{document}
\title{Global Continuation of Homoclinic Solutions}
\author{Christian P\"otzsche}
\author{Robert Skiba}
\address{Institut f\"ur Mathematik, Alpen-Adria Universit\"at Klagenfurt\\
Universit\"atsstra{\ss}e 65--67, 9020 Klagenfurt, Austria\\
\texttt{christian.poetzsche@aau.at}}

\address{Wydzia\l{} Matematyki i Informatyki, Uniwersytet Miko\l{}aja Kopernika w Toruniu\\
ul.\ Chopina 12/18, 87-100 Toru{\'n}, Poland\\
\texttt{robo@mat.uni.torun.pl}}

\begin{abstract}
	When extending bifurcation theory of dynamical systems to nonautonomous problems, it is a central observation that hyperbolic equilibria persist as bounded entire solutions under small temporally varying perturbations. In this paper, we abandon the smallness assumption and aim to investigate the global structure of the entity of all such bounded entire solutions in the situation of nonautonomous difference equations. Our tools are global implicit function theorems based on an ambient degree theory for Fredholm operators due to Fitzpatrick, Pejsachowicz and Rabier. For this we yet have to restrict to so-called homoclinic solutions, whose limit is $0$ in both time directions.
\end{abstract}

\keywords{Nonautonomous dynamical system, topological degree, Fredholm operator, properness, exponential dichotomy}
\subjclass[2010]{Primary 37J45, secondary 47H11, 39A29, 34C23, 34C37}

\maketitle
\section{Introduction}
The classical local theory of (discrete) dynamical systems deals with the behavior of finite-dimensional autonomous difference equations
\begin{equation}
	x_{t+1}=g(x_t,\alpha)
	\label{eqn}
\end{equation}
near given reference solutions, which are typically fixed or periodic points. An elementary application of the implicit function theorem implies that such periodic solutions persist under variation of the parameter $\alpha$ in \eqref{eqn}, provided they are hyperbolic and $\alpha$ is independent of time. Hyperbolicity is a generic property and means that there are no Floquet multipliers of the linearization on the unit circle of the complex plane.

In real-world models, yet, the parameter $\alpha$ describes the influence of the environment on a system \eqref{eqn} and thus it is more realistic and even natural to allow fluctuations of $\alpha$ in $t$. This leads to nonautonomous equations
\begin{equation}
	x_{t+1}=g(x_t,\alpha_t)
	\label{eqn2}
\end{equation}
and requires an extension of the established textbook theory (cf.~\cite{kloeden:rasmussen:10}), since aperiodic time-variant problems typically do not possess equilibria or periodic solutions. Already on this basic level one is confronted with the question to find adequate substitutes for equilibria under temporal forcing?

An answer can be given when \eqref{eqn} possesses a hyperbolic fixed point $\phi^\ast$ at a reference parameter value $\alpha^\ast$. Here, $\phi^\ast$ persists as a continuous branch $\alpha\mapsto\phi(\alpha)$ of bounded entire solution to \eqref{eqn2} with $\phi(\alpha^\ast)=\phi^\ast$ (typically not fixed points), as long as the parameter sequence $\alpha_t$, $t\in\Z$, remains uniformly close to $\alpha^\ast$ (cf.~\cite{poetzsche:08b}). The proof of this persistence result is again based on the implicit function theorem, but now applied to an operator equation between suitable sequence spaces. The condition yielding invertibility of the derivative is precisely an exponential dichotomy, which therefore represents the correct nonautonomous hyperbolicity concept. For general time-dependencies, however, an exponential dichotomy is not generic anymore.

While this approach yields information in the vicinity of a parameter~$\alpha^\ast$, it is nonetheless interesting to achieve insight on the global structure of the solution branch $\phi(\alpha)$. For this two approaches are conceivable:
\begin{itemize}
	\item[(1)] One works with analytical results guaranteeing (unique) existence over the whole parameter range (cf., for instance, \cite{radulescu:89}), which are in the spirit of the Hadamard-Levy theorem on global invertibility.

	\item[(2)] One applies a global implicit function theorem obtained from topological tools like a mapping degree.
\end{itemize}
In comparison, approach~(2) works under significantly weaker and for this reason interesting situations, if a feasible topological degree theory is available. Inspired by the works of \cite{morris:05,evequoz:09} or \cite{pejsachowicz:skiba:12, pejsachowicz:skiba:13} we employ a Fredholm degree developed in \cite{fitz-pejs-rabier:92,pejs-rabier:98}. However, since it relies on mappings having a constant Fredholm index $0$, this theory unfortunately does not apply to general bounded perturbations $(\alpha_t)_{t\in\Z}$. The resulting global implicit function Thms.~\ref{thmkiel} and \ref{thmevequoz} only apply to nonlinear Fredholm mappings of index~$0$. For bounded perturbations this can be guaranteed only locally. Dealing with solutions decaying to $0$, however, allows the argument that the Fredholm index is invariant under compact perturbations. In conclusion, we rather have to restrict to parameter sequences which asymptotically vanish in both time directions. Hence, we look for so-called homoclinic solutions and their global structure under variation of $\alpha$.
\subsection{Results and structure}
We are interested in the global structure of branches $C$ of homoclinic solutions emanating from a hyperbolic fixed point, or more general, from a hyperbolic bounded entire solution $\phi^\ast$, when varying the parameter $\lambda$ not only near some reference parameter $\lambda^\ast$, but over its whole range. We illustrate this by means of nonautonomous finite-dimensional difference equations
\begin{equation}
	\tag{$\Delta_\lambda$}
	\boxed{x_{t+1}=f_t(x_t,\lambda)}
\end{equation}
and roughly establish the following:
\begin{itemize}
	\item For right-hand sides of \eqref{deq} defined on a proper subset of $\R^d\tm\R$ the branches run from boundary to boundary, unless $C\setminus\set{(\phi^\ast,\lambda^\ast)}$ is connected (alternatives (a) and (b) of \tref{thmglobdeq}).

	\item If the right-hand sides are globally defined on $\R^d\tm\R$, then $C\setminus\set{(\phi^\ast,\lambda^\ast)}$ is either connected, or consists of two disjoint and unbounded branches (alternatives (c) and (d) of \tref{thmglobdeq}).
\end{itemize}
This classification of solution branches in \tref{thmglobdeq} is based on abstract results taken from \cite{kielhoefer:12,evequoz:09}. Up to our knowledge we present their first application to discrete time dynamical systems. Thereto, \eqref{deq} is understood as a parameter-dependent equation in the space of sequences with two-sided limit~$0$. Its analysis is based on preparations given in Sect.~\ref{sec2} and \ref{sec3}. Yet concepts and notions from dynamical systems are ubiquitous: In Sect.~\ref{sec4} we illustrate that the required Fredholm properties are closely connected to exponential dichotomies over the entire time axis $\Z$, as well as both half axes. Furthermore, a sufficient condition for properness is formulated in terms of limit sets for the Bebutov flow. Our result significantly extends the properness criterion from \cite{pejsachowicz:skiba:12}. These assumptions are particularly easy to verify in case of asymptotically periodic equations (see Sect.~\ref{sec52}). We close with various examples illustrating the main result. For the convenience of the reader, we conclude the paper with three appendices on our abstract global continuation results, the Bebutov flow/hull construction and finally sufficient criteria for unique bounded solutions.

Concerning related work, the global behavior of bifurcating solution branches in $\ell^2$ was studied in \cite{pejsachowicz:skiba:12}. Moreover, global continuation of solutions to boundary value problems for nonautonomous ordinary differential equations on the nonnegative halfline was considered in the inspiring references \cite{evequoz:09,morris:05}.
\subsection{Notation and sequence spaces}
A discrete interval $\I$ is the intersection of a real interval with the integers and $\I':=\set{t\in\I:\,t+1\in\I}$. We set $\Z_0^+:=\set{t\in\Z:\,t\geq 0}$, $\Z_0^-:=\set{t\in\Z:\,t\leq 0}$ for the half axes.

For Banach spaces $X,Y$ we denote the space of linear bounded operators between $X$ and $Y$ by $L(X,Y)$, $GL(X,Y)$ are the invertible elements and $F_0(X,Y)\subseteq L(X,Y)$ the Fredholm operators with index $0$. We briefly write $L(X):=L(X,X)$ (similarly for the other spaces) and $I_X$ for the identity mapping on $X$. Furthermore, $N(T):=T^{-1}(\set{0})$ and $R(T):=TX$ are the \emph{kernel} resp.\ the \emph{range} of an operator $T\in L(X,Y)$.

The cartesian product $X\tm Y$ is equipped with the norm
$$
	\norm{(x,y)}_{X\tm Y}:=\max\set{\norm{x}_X,\norm{y}_Y}
$$
throughout, and we write $\abs{\cdot}$ for a fixed norm on $\R^d$. Given a subset $O\subseteq X$, $\overline{O}$ denotes its closure. When $Z$ is a metric space and $\fB$ stands for a family of subsets of $Z$, a continuous $f:X\to Z$ is called \emph{proper on $\fB$}, if the preimages $f^{-1}(B)$ are compact for every $B\in\fB$.

Let $\ell^\infty(\Omega)$ be the set of bounded sequences $\phi=(\phi_t)_{t\in\Z}$ with values in $\Omega$ and $\ell^\infty:=\ell^\infty(\R^d)$ the Banach space of bounded sequences in $\R^d$ with norm
$$
	\norm{\phi}:=\sup_{t\in\Z}\abs{\phi_t}.
$$
The set $\ell_0$ of sequences with two-sided limit $0$ is a closed subspace of $\ell^\infty$. Convexity of $\Omega$ carries over to $\ell_0(\Omega)$ and so does openness. A sequence $(\phi^n)_{n\in\N}$ in $\ell^\infty$ is said to \emph{converge pointwise} to $\phi\in\ell^\infty$, if
$$
	\lim_{n\to\infty}\phi_t^n=\phi_t\fall t\in\Z
$$
holds and we abbreviate $\phi^n\top{n\to\infty}\phi$ in this case.

We introduce two bounded linear operators, namely the \emph{left shift}
\begin{align*}
	\sS\in L(\ell_0),\;\; (\sS\phi)_t:=\phi_{t+1}\fall t\in\Z
\end{align*}
and the \emph{evaluation operator}
\begin{align*}
	\ev_t\in L(\ell_0,\R^d),\;\;\ev_t\phi:=\phi_t\fall t\in\Z.
\end{align*}
The iterates of $\sS$ are denoted by $\sS^l$, $l\in\Z_0^+$. Notice that the shift $\sS$ is invertible with $(\sS^{-1}\phi)_t=\phi_{t-1}$ and therefore $\sS^l$ makes sense for all powers $l\in\Z$.

Let us next prepare compactness criteria in $\ell_0$, which are used to verify properness of nonlinear operators. We say a sequence $(\phi^n)_{n\in\N}$ in $\ell_0$ vanishes \emph{shiftly at} $\infty$, if for any increasing sequence $(k_n)_{n\in\N}$ in $\N$ and any sequence $(s_n)_{n\in\N}$ in $\Z$ with $\lim\limits_{n\to\infty}\abs{s_n}=\infty$, $\sS^{s_n}\phi^{k_n}\top{n\to\infty}\psi\in \ell^{\infty}$ it follows that $\psi=0$.
\begin{rem}\label{remtycho}
	(1) Note that pointwise convergence in $\ell^\infty$ does not imply weak convergence or boundedness. In order to illustrate this, we choose $d=1$ and write $\phi=(\ldots,\phi_{-1},\hat\phi_0,\phi_1,\ldots)$, i.e.\ mark the index $0$ element $\phi_0$ of $\phi$ with a hat. For example, let us take a sequence
	$$
		\phi^n:=(\ldots,0,\underbrace{\hat 1,\ldots,1}_{n\text{ times}},n,0,\ldots)\in\ell_0\fall n\in\N
	$$
	with pointwise limit $(\ldots,0,\hat 1,1,\ldots)$. Nevertheless, $(\phi^n)_{n\in\N}$ is not weakly convergent and of course unbounded due to $\norm{\phi^n}=n$ for all $n\in\N$.

	(2) From the sequential Tychonoff theorem it follows that, if a sequence $(\phi^n)_{n\in\N}$ in $\ell^\infty$ is bounded, then there exists a subsequence $(\phi^{n_k})_{k\in\N}$ such that $\phi^{n_k}\top{k\to\infty}\phi\in\ell^\infty$ (see \cite[p.~119, Prop.~1.8.12]{Tao:2010}).
\end{rem}

This brings us to the desired compactness characterization in $\ell_0$:
\begin{lem}[compactness in $\ell_0$]\label{lemcomp}
	For bounded $B\subset\ell_0$ are equivalent:
	\begin{enumerate}
		\item $B$ is relatively compact

		\item there exists a $\beta\in\ell_0(\R)$ such that $\abs{\phi_t}\leq\beta_t$ for all $t\in\Z$ and $\phi\in B$

		\item for sequences $(\phi^n)_{n\in\N}$ in $B$ and $(s_n)_{n\in\N}$ in $\Z$ with $\lim\limits_{n\to\infty}\abs{s_n}=\infty$ satisfying $\sS^{s_n}\phi^n\top{n\to\infty}\psi\in \ell^{\infty}$ it follows that $\psi=0$.
	\end{enumerate}
\end{lem}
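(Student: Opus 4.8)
The plan is to prove the chain of implications $(a)\Rightarrow(b)\Rightarrow(a)$ and $(a)\Leftrightarrow(c)$, exploiting the fact that $\ell_0$ is a closed subspace of $\ell^\infty$ and that relative compactness in the complete space $\ell_0$ is equivalent to total boundedness together with sequential precompactness.

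For $(a)\Rightarrow(b)$: given a relatively compact $B$, I would define $\beta_t:=\sup_{\phi\in B}\abs{\phi_t}$ for each $t\in\Z$; finiteness of $\beta_t$ follows from boundedness of $B$, and $\abs{\phi_t}\leq\beta_t$ is immediate. The point to check is $\beta\in\ell_0(\R)$, i.e.\ $\beta_t\to 0$ as $\abs{t}\to\infty$. If this failed, there would be $\eps>0$, a sequence $(t_k)$ with $\abs{t_k}\to\infty$, and $\phi^k\in B$ with $\abs{\phi^k_{t_k}}>\eps$. By relative compactness pass to a subsequence with $\phi^k\to\phi$ in $\ell_0$ (norm convergence); then $\abs{\phi^k_{t_k}-\phi_{t_k}}\leq\norm{\phi^k-\phi}\to 0$, while $\phi_{t_k}\to 0$ since $\phi\in\ell_0$, contradicting $\abs{\phi^k_{t_k}}>\eps$. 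Conversely, for $(b)\Rightarrow(a)$: with the dominating sequence $\beta\in\ell_0(\R)$ at hand, I would verify total boundedness directly. Given $\eps>0$, choose $N$ with $\beta_t<\eps$ for $\abs{t}>N$; then every $\phi\in B$ is $\eps$-close in $\norm{\cdot}$ to its truncation supported on $\set{-N,\dots,N}$, and these truncations lie in the bounded finite-dimensional set $\prod_{\abs{t}\leq N}\intcc{-\beta_t,\beta_t}\subset(\R^d)^{2N+1}$, which is totally bounded; a finite $\eps$-net there pulls back to a finite $2\eps$-net for $B$ in $\ell_0$. Since $\ell_0$ is complete, total boundedness gives relative compactness.

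For the equivalence with (c): assume $(a)$ and take sequences $(\phi^n)$ in $B$, $(s_n)$ in $\Z$ with $\abs{s_n}\to\infty$ and $\sS^{s_n}\phi^n\top{n\to\infty}\psi$. By $(b)$ we have $\abs{(\sS^{s_n}\phi^n)_t}=\abs{\phi^n_{t+s_n}}\leq\beta_{t+s_n}$ for every fixed $t$, and $\beta_{t+s_n}\to 0$ as $n\to\infty$ because $\abs{t+s_n}\to\infty$ and $\beta\in\ell_0(\R)$; passing to the pointwise limit forces $\psi_t=0$ for all $t$, i.e.\ $\psi=0$, which is $(c)$. For $(c)\Rightarrow(a)$ I argue by contradiction using Remark~\ref{remtycho}(2): if $B$ is bounded but not relatively compact in $\ell_0$, then $B$ fails to be totally bounded, so there exist $\eps>0$ and a sequence $(\phi^n)$ in $B$ with $\norm{\phi^n-\phi^m}\geq\eps$ for $n\neq m$. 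Since $B$ is bounded, Remark~\ref{remtycho}(2) yields a subsequence converging pointwise to some $\phi\in\ell^\infty$; the pairwise $\eps$-separation rules out norm convergence, so — since $\ell_0$-convergence would follow from pointwise convergence plus uniform smallness of tails — the obstruction must be that the ``mass'' of $\phi^n-\phi$ escapes to $\pm\infty$. Concretely, one extracts indices $s_n$ with $\abs{s_n}\to\infty$ along which $\abs{\phi^{k_n}_{s_n}}$ (or a difference thereof) stays bounded below by a fixed positive constant, and then a further pointwise-convergent subsequence of $\sS^{s_n}\phi^{k_n}$ has a nonzero limit, contradicting $(c)$.

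The main obstacle is the implication $(c)\Rightarrow(a)$: one must convert the abstract failure of total boundedness into a genuine escape of mass to infinity that violates the ``shiftly vanishing'' condition. The delicate point is that failure of norm-Cauchyness of a pointwise-convergent, bounded sequence in $\ell_0$ does not by itself localize \emph{where} the discrepancy sits — it could in principle oscillate over a bounded window. I would handle this with a diagonal/exhaustion argument: fix an increasing exhaustion of $\Z$ by finite windows; if the discrepancies were eventually confined to a fixed window, pointwise convergence would already give norm convergence on that window and the small tails would close the estimate, contradicting $\eps$-separation; hence for each window the discrepancy eventually leaves it, and a diagonal choice produces the required shifts $s_n\to\pm\infty$ carrying a uniformly non-negligible portion of $\phi^{k_n}$, so that (after a Tychonoff extraction) $\sS^{s_n}\phi^{k_n}$ converges pointwise to a nonzero element — the desired contradiction with (c). Everywhere else the arguments are routine tail estimates and standard facts about total boundedness in complete metric spaces.
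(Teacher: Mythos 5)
Your chain $(a)\Leftrightarrow(b)$ and $(a)\Rightarrow(c)$ is correct and, if anything, more elementary than the paper's: the paper invokes the Hausdorff measure of noncompactness $\chi(B)=\lim_{n\to\infty}\sup_{\phi\in B}\sup_{n<|t|}|\phi_t|$ from Bana\'s--Martinon, whereas you prove $(a)\Leftrightarrow(b)$ by hand via total boundedness, and your $(a)\Rightarrow(c)$ proceeds through the dominating sequence $\beta$ from $(b)$ rather than, as the paper does, through norm-convergence of a subsequence combined with shift-invariance of the norm. All of that is sound.

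The gap sits precisely where you flag it, in $(c)\Rightarrow(a)$, and it is a real one. After extracting an $\eps$-separated sequence $(\phi^n)$ with pointwise limit $\phi\in\ell^\infty$, you build the escaping shifts $s_n$ out of the \emph{discrepancy} $\phi^n-\phi$: you choose $s_n$ where $|\phi^{k_n}_{s_n}-\phi_{s_n}|$ is bounded below and then claim a nonzero pointwise limit for $\sS^{s_n}\phi^{k_n}$. But the pointwise limit $\phi$ of a bounded sequence in $\ell_0$ need not lie in $\ell_0$, and $(c)$ says nothing about shifts of $\phi^n-\phi$. If $\phi\notin\ell_0$, having $|\phi^{k_n}_{s_n}-\phi_{s_n}|\geq\eps'$ is perfectly compatible with $\phi^{k_n}_{s_n}\to 0$ (the discrepancy can be carried entirely by $\phi$); e.g.\ for $\phi^n=c\,\chi_{\{-n,\dots,n\}}$ with $c\ne0$ one has $\phi\equiv c$, and picking $s_n$ just outside the window $\{-n,\dots,n\}$ gives $\phi^n_{s_n}=0$ even though $|\phi^n_{s_n}-\phi_{s_n}|=|c|$. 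Your exhaustion/diagonal heuristic handles the case $\phi\in\ell_0$ (there $\phi_{s_n}\to0$, so lower bounds on the discrepancy transfer to $\phi^{k_n}_{s_n}$), but the case $\phi\notin\ell_0$ needs a separate argument, for instance: pick $t_k$ with $|t_k|\to\infty$ and $|\phi_{t_k}|\geq\delta>0$, then use pointwise convergence $\phi^n_{t_k}\to\phi_{t_k}$ to choose $n_k\uparrow\infty$ with $|\phi^{n_k}_{t_k}|\geq\delta/2$, and only then apply Tychonoff. The paper sidesteps all of this by proving $(c)\Rightarrow(b)$ directly: negating $\lim_{|t|\to\infty}\sup_{\phi\in B}|\phi_t|=0$ immediately hands you $\phi^n\in B$ and $t_n$ with $|t_n|\to\infty$ and $|\phi^n_{t_n}|\geq\eps$, to which Tychonoff and $(c)$ apply with no pointwise limit $\phi$ ever entering the picture. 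That route is both shorter and avoids the $\ell^\infty$-versus-$\ell_0$ subtlety you are fighting.
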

\begin{proof}
	In \cite[Thm.~3]{banas:martinon:92} it is shown that the Hausdorff measure of noncompactness on $\ell_0$ is given by $\chi(B):=\lim_{n\to\infty}\sup_{\phi\in B}\sup_{n<|t|}\abs{\phi_t}$ and evidently $B\subset\ell_0$ is relatively compact, if and only if $\chi(B)=0$ holds.

	$(a)\Rightarrow(c)$: Let $(\phi^n)_{n\in\N}$ be a sequence in a relatively compact set $B\subset\ell_0$ and $(s_n)_{n\in\N}$, $\psi\in\ell^\infty$ be as in the above assertion. As $B$ is relatively compact, it follows that there exist $\phi\in\ell_0$ and a subsequence $(\phi^{n_k})_{k\in\N}$ such that
	\begin{equation}
		\label{lemcomp1}
		\lim_{k\to\infty}\norm{\sS^{s_{n_k}}\phi^{n_k}-\sS^{s_{n_k}}\phi}
		=
		\lim_{k\to\infty}\norm{\phi^{n_k}-\phi}
		=0\fall k\in\N,
	\end{equation}
	since the norm on $\ell_0$ is invariant under translations ($\sS$ is an isometry). As
	\begin{equation*}
		\sS^{s_{n_k}}\phi^{n_k}\top{k\to\infty}\psi
		\quad\text{ and }\quad
		\sS^{s_{n_k}}\phi\top{k\to\infty} 0,
	\end{equation*}
	it consequently results from \eqref{lemcomp1} that $\psi=0$.

	$(c)\Rightarrow(b)$: It suffices to show that
	$$
		\widetilde{\beta}_n:=\sup_{\psi^n\in B}\abs{\psi^n}\xrightarrow[n\to\infty]{}0.
	$$
	By contradiction, assume $(\widetilde{\beta}_n)_{n\in\N}$ does not converge to~$0$. Then there exist $\varepsilon>0$, a sequence $(\phi^n)_{n\in\N}$ in $B$ and a sequence of integers $(t_n)_{n\in\N}$ such that
	\begin{align*}
		\lim_{n\to\infty}|t_n|=\infty \text{ and } |\phi_{t_n}^n|\geqslant \eps\fall n\in\N.
	\end{align*}
	Now observe $\phi_{t_n}^n=\ev_0\sS^{t_n}\phi^n$. As $\sS^{t_n}\phi^n$ is bounded in the space $\ell^\infty$, we may assume w.l.o.g., in view of \rref{remtycho}(2), $\sS^{t_n}\phi^n\top{n\to\infty} \psi$ holds for some $\psi\in\ell^\infty$. Hence, it follows that $\psi=0$ and, in particular, $\lim_{n\to\infty}\ev_0\sS^{t_n}\phi^n=0$. This contradicts the fact that $\abs{\phi_{t_n}^n}\geqslant \eps$ for all $n\in\N$.

	$(b)\Rightarrow(a)$: Assume that there is $\beta\in\ell_0(\R)$ so that $\abs{\phi_t}\leq\beta_t$ for all $t\in\Z$ and $\phi\in B$. Then one infers $\chi(B)=0$ from
	\begin{equation*}
		\sup_{\phi\in B}\sup_{n<|t|}\abs{\phi_t}\leq \sup_{n<|t|}\beta_t\xrightarrow[n\to\infty]{}0
	\end{equation*}
	and the proof is complete.
\end{proof}
\section{Nonautonomous difference equations}
\label{sec2}
This paper addresses nonautonomous difference equations
\begin{equation}
	\tag{$\Delta_\lambda$}
	\boxed{x_{t+1}=f_t(x_t,\lambda),}
	\label{deq}
\end{equation}
whose right-hand side $f_t\colon\Omega\tm\Lambda\to\R^d$, $t\in\Z$, is defined on an open, convex neighborhood $\Omega\subseteq\R^d$ of $0$ and depends on a parameter $\lambda$. The \emph{general solution} to \eqref{deq} is given by
$$
	\vphi_\lambda(t;\tau,\xi)
	:=
	\begin{cases}
		\xi,&t=\tau,\\
		f_{t-1}(\cdot,\lambda)\circ\cdots\circ f_\tau(\cdot,\lambda)(\xi),&\tau<t,
	\end{cases}
$$
as long as the compositions stay in $\Omega$. An \emph{entire solution} to \eqref{deq} is a sequence $(\phi_t)_{t\in\Z}$ in $\Omega$ with $\phi_{t+1}\equiv f_t(\phi_t,\lambda)$ on $\Z$. For a fixed $\lambda^\ast\in\Lambda$ it is assumed throughout that there exists an entire solution $\phi^\ast$ to $(\Delta_{\lambda^\ast})$ satisfying
$$
	\lim\limits_{t\to\pm\infty}\phi_t^\ast=0.
$$
Such sequences are denoted as \emph{homoclinic} solutions with the trivial solution as immediate example.

In the following, we study the global structure of the set of homoclinic solutions to \eqref{deq} containing the pair $(\phi^\ast,\lambda^\ast)$ when $\lambda$ varies over the complete parameter space $\Lambda$. Our corresponding results based on functional analytical tools rely on two pillars, namely the Fredholmness and the properness of certain nonlinear operators, which we are going to prepare in the subsequent section. Throughout this requires to impose the standing
\begin{hyp}
	Let $\Lambda$ be an open interval, $\Omega\subseteq\R^d$ an open, convex neighborhood of $0$ and $\phi^\ast$ a homoclinic solution to $(\Delta_{\lambda^\ast})$ for some $\lambda^\ast\in\Lambda$. Assume that the continuous mappings $f_t\colon\Omega\tm\Lambda\to\R^d$, $t\in\Z$, satisfy:
	\begin{itemize}
		\item[$(H_0)$] For every compact $K\subset\R\tm\R^d$ one has
		$$
			\sup_{t\in\Z}\sup_{x\in K\cap(\Omega\tm\Lambda)}
			\abs{f_t(x,\lambda)}<\infty
		$$

		\item[$(H_1)$] for every $\eps>0$ and compact $K\subset\R\tm\R^d$ there exists a $\delta>0$ such that
		$$
			\max\set{\abs{x_2-x_1},\abs{\lambda_2-\lambda_1}}<\delta
			\quad\Rightarrow\quad
			\sup_{t\in\Z}\abs{f_t(x_2,\lambda_2)-f_t(x_1,\lambda_1)}<\eps
		$$
		for all $(x_1,\lambda_1),(x_2,\lambda_2)\in K\cap(\Omega\tm\Lambda)$

		\item[$(H_2)$] $D_1f_t\colon\Omega\tm\Lambda\to L(\R^d)$ exists as continuous function, for every bounded $B\subseteq\Omega$ one has
		$$
			\sup_{t\in\Z}\sup_{x\in B}
			\abs{D_1f_t(x,\lambda)}<\infty\fall\lambda\in\Lambda
		$$
		and for every $\eps>0$, $\lambda_0\in\Lambda$ there exists a $\delta>0$ such that
		$$
			\abs{x_2-x_1}<\delta
			\quad\Rightarrow\quad
			\sup_{t\in\Z}\abs{D_1f_t(x_2,\lambda)-D_1f_t(x_1,\lambda_0)}<\eps
		$$
		for all $x_1,x_2\in\Omega$, $\lambda\in B_\delta(\lambda_0)$

		\item[$(H_3)$] $\lim_{t\to\pm\infty}f_t(0,\lambda)=0$ for all $\lambda\in\Lambda$.
	\end{itemize}
\end{hyp}

Our preliminaries concerning the linear theory are as follows: For coefficients $A_t\in L(\R^d)$, $t\in\Z$, we consider a linear difference equation
\begin{equation}
	\tag{$L_0$}
	\boxed{x_{t+1}=A_tx_t}
	\label{deqlin}
\end{equation}
in $\R^d$ with the \emph{evolution operator} $\Phi_A:\set{(t,s)\in\Z^2\mid s\leq t}\to L(\R^d)$,
$$
	\Phi_A(t,s)
	:=
	\begin{cases}
		A_{t-1}\cdots A_s,&s<t,\\
		I_{\R^d},&s=t.
	\end{cases}
$$
Let $\I$ be an unbounded discrete interval. An \emph{invariant projector} is a sequence of projections $P_t\in L(\R^d)$, $t\in\I$, with
\begin{align*}
	A_{t+1}P_t=P_tA_t,\quad\quad
	A_t|_{N(P_t)}:N(P_t)\to N(P_{t+1})\text{ is invertible for all }t\in\I'.
\end{align*}
Hence, the restriction $\bar\Phi_A(t,s):=\Phi_A(t,s)|_{N(P_s)}\in GL(N(P_s),N(P_t))$ is well-defined for arbitrary $t,s\in\I$. One says the equation \eqref{deqlin} has an \emph{exponential dichotomy} (ED for short) on $\I$ with invariant projector $(P_t)_{t\in\I}$, if there exist reals $K\geq 1$, $\alpha\in(0,1)$ such that the exponential estimates
\begin{align}
	\abs{\Phi_A(t,s)P_s}\leq K\alpha^{t-s}, \quad\quad
	\abs{\bar\Phi_A(s,t)[I_{\R^d}-P_t]}&\leq K\alpha^{t-s}
	\fall s\leq t
	\label{edest}
\end{align}
and $s,t\in\I$ hold. The associate \emph{dichotomy spectrum} is given by
$$
	\Sigma_\I(A):=\set{\gamma>0\mid x_{t+1}=\gamma^{-1}A_tx_t\text{ has no ED on }\I}.
$$
In general, $\Sigma_\I(A)\subseteq(0,\infty)$ is the union of up to $d$ (closed) spectral intervals (for this, cf.~\cite[Thm.~4]{aulbach:minh:96}), which degenerate to points e.g.\ in the setting of
\begin{exam}[periodic linear equations]\label{exper}
	Let $p\in\N$. In case \eqref{deqlin} is $p$-periodic, i.e.\ $A_{t+p}=A_t$ holds for all $t\in\Z$, then $\Sigma_\I(A)=\sqrt[p]{\abs{\sigma(\Phi_A(p,0))}}\setminus\set{0}$ is discrete. In particular, for autonomous equations \eqref{deqlin} the dichotomy spectrum is given by the moduli of the nonzero eigenvalues $\Sigma_\I(A)=\abs{\sigma(A)}\setminus\set{0}$.
\end{exam}

It proves advantageous to introduce
\begin{align*}
	\Sigma^+(A)&:=\Sigma_{\Z_0^+}(A),&
	\Sigma^-(A)&:=\Sigma_{\Z_0^-}(A),&
	\Sigma(A)&:=\Sigma_{\Z}(A)
\end{align*}
as \emph{forward}, \emph{backward} resp.\ \emph{all time spectrum} of \eqref{deqlin}; it is $\Sigma^\pm(A)\subseteq\Sigma(A)$.

On the sequence space $\ell_0$ and for a bounded sequence $(A_t)_{t\in\Z}$ we introduce the bounded operator
\begin{align*}
	\eL_A\in L(\ell_0),\quad\quad
	(\eL_A\phi)_t:=\phi_t-A_{t-1}\phi_{t-1}\fall t\in\Z,
\end{align*}
whose Fredholm properties are as follows:
\begin{lem}[Fredholmness of $\eL_A$]\label{lempalmer}
	The following statements are equivalent:
	\begin{enumerate}
		\item $0\not\in\Sigma^+(A)$ and $0\not\in\Sigma^-(A)$ with corresponding projectors $P^+$ resp.\ $P^-$

		\item $\eL_A$ is Fredholm with $\ind\eL_A=\rk P_0^+-\rk P_0^-$.
	\end{enumerate}
\end{lem}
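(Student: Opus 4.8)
The natural strategy is to realize $\eL_A$ as a "glued" operator built from the two half-line problems and to exploit the classical fact (due to Palmer, in the continuous-time case) that on a half line the absence of $0$ in the dichotomy spectrum is equivalent to a Fredholm property of the corresponding difference operator, with index and defects controlled by the ranks of the dichotomy projectors at the endpoint. Concretely, I would first record what $0\notin\Sigma^+(A)$ means: $x_{t+1}=A_tx_t$ has an exponential dichotomy on $\Z_0^+$ with projector $P^+$, and likewise on $\Z_0^-$ with $P^-$. The plan is then to show both implications via this half-line analysis.

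\textbf{From (a) to (b).} Assuming the two half-line dichotomies, I would analyze $N(\eL_A)$ and $R(\eL_A)$ directly. A sequence $\phi\in\ell_0$ lies in $N(\eL_A)$ iff it is an entire solution of \eqref{deqlin} tending to $0$ in both directions; by the ED on $\Z_0^+$ such a solution is determined by $\phi_0\in R(P_0^+)$ (and then decays forward), and by the ED on $\Z_0^-$ it must have $\phi_0\in N(P_0^-)$ (so that it decays backward, using the bounded inverse on the unstable fibre). Hence $N(\eL_A)\cong R(P_0^+)\cap N(P_0^-)$, which is finite-dimensional. For the range, given $\psi\in\ell_0$ one constructs a bounded solution of $\phi_t-A_{t-1}\phi_{t-1}=\psi_t$ on each half line by the standard variation-of-constants / Green's function formula associated with the dichotomy (the series converge because of the exponential estimates \eqref{edest}, and one checks the solution lies in $\ell_0$ using $\psi\in\ell_0$); the two half-line solutions can be matched at $t=0$ precisely when a finite number of linear compatibility conditions on $\psi$ are met, so $R(\eL_A)$ is closed of finite codimension, with $\operatorname{codim}R(\eL_A)=\dim\bigl(\R^d/(R(P_0^+)+N(P_0^-))\bigr)$. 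A short linear-algebra computation then gives
\begin{equation*}
	\ind\eL_A=\dim\!\bigl(R(P_0^+)\cap N(P_0^-)\bigr)-\dim\!\bigl(\R^d/(R(P_0^+)+N(P_0^-))\bigr)=\rk P_0^+-\rk P_0^-,
\end{equation*}
using $\dim(U\cap V)-\dim(\R^d/(U+V))=\dim U+\dim V-d$ with $U=R(P_0^+)$, $V=N(P_0^-)$ and $\dim V=d-\rk P_0^-$.

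\textbf{From (b) to (a).} For the converse I would argue by contraposition: if $0\in\Sigma^+(A)$, then the restricted operator $\eL_A^+$ on the half line $\Z_0^+$ fails to be "nicely" semi-Fredholm in the sense that either its kernel or its cokernel is infinite-dimensional (absence of an ED on a half line forces non-closed range or infinite-dimensional kernel for the associated operator — this is the half-line counterpart of the Fredholm characterization, again a Palmer-type result). Gluing a genuinely Fredholm half-line operator on $\Z_0^-$ cannot repair an infinite-dimensional obstruction on $\Z_0^+$, so $\eL_A$ itself would not be Fredholm, a contradiction; symmetrically for $0\in\Sigma^-(A)$. Thus (b) forces both $0\notin\Sigma^\pm(A)$, and the index formula from the first part pins down the relation between the ranks.

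\textbf{Main obstacle.} The essential technical point — and the step I expect to be most delicate — is the half-line equivalence "no ED on $\Z_0^\pm$ $\iff$ $\eL_A^\pm$ is (semi-)Fredholm of the stated type," together with the clean gluing lemma that lets one compute $N(\eL_A)$ and $R(\eL_A)$ from the half-line data and the transversality of $R(P_0^+)$ and $N(P_0^-)$. This is exactly the discrete-time analogue of Palmer's theorem; I would either cite the corresponding reference (e.g.\ the literature on roughness and Fredholm properties of difference operators) or spell out the Green's-function estimates needed to see that the two-sided solution operator is bounded on $\ell_0$ whenever the compatibility conditions hold. Everything else — the index bookkeeping and the contraposition — is then routine finite-dimensional linear algebra.
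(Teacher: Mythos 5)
Your plan is sound and is, in substance, the standard Palmer-type argument that the paper itself does not spell out: the printed proof of this lemma consists entirely of citations (Baskakov for $(a)\Rightarrow(b)$, Latushkin--Tomilov for $(b)\Rightarrow(a)$), so you are reconstructing what those references establish rather than diverging from the paper. Your $(a)\Rightarrow(b)$ direction is correct and essentially complete modulo routine estimates: the identification $N(\eL_A)\cong R(P_0^+)\cap N(P_0^-)$ via $\phi\mapsto\phi_0$ works (injectivity uses that a bounded backward solution lies in the unstable bundle, where $\bar\Phi_A(0,t)$ is invertible), the Green's-function construction on each half axis shows $R(\eL_A)$ is closed with $\mathrm{codim}\,R(\eL_A)=\dim\bigl(\R^d/(R(P_0^+)+N(P_0^-))\bigr)$, and your dimension count then yields $\ind\eL_A=\rk P_0^+-\rk P_0^-$ exactly as claimed. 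The one place where your proposal is a plan rather than a proof is $(b)\Rightarrow(a)$: the assertions that absence of an ED on a half axis forces an infinite-dimensional kernel or a non-closed range for the half-line operator, and that Fredholmness of the full-line operator passes to the half-line restrictions (``gluing cannot repair''), are precisely the nontrivial content of the converse theorem and are not consequences of anything you prove; as you acknowledge, one must either cite the discrete-time Palmer/Latushkin--Tomilov result here or carry out the input--output (admissibility) argument in detail. Since the paper resolves this by citation as well, your proposal is acceptable, with the forward direction actually more informative than the paper's one-line reference.
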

\begin{proof}
	$(a)\Rightarrow(b):$ See \cite[Thm.~8 and Cor.~17]{baskakov:00}.

	$(b)\Rightarrow(a):$ See \cite[Thm.~1.6]{latushkin:tomilov:05}.
\end{proof}
\section{Substitution operators on $\ell_0$}
\label{sec3}
Our overall approach is functional analytic and recursions \eqref{deq} are understood as abstract equations in ambient sequence spaces. This initially requires a careful analysis of the operators $\sF_0,\sG_0:\ell_0(\Omega)\to\ell_0$ defined by
\begin{align*}
	\sF_0(\phi)_t:=f_t(\phi_t),\quad\quad
	\sG_0(\phi)_t:=\phi_{t+1}-f_t(\phi_t)\fall t\in\Z,
\end{align*}
where the mappings $f_t\colon\Omega\to\R^d$, $t\in\Z$, are assumed to satisfy $(H_0)$-$(H_3)$ (without dependence on $\lambda$). As a result of \cite[Prop.~2.4, Thm.~2.5]{poetzsche:08b} both operators $\sF_0,\sG_0$ are well-defined.

At this point we remind the reader to some basic notions from topological dynamics (see App.~\ref{secB}, though). The \emph{hull} of a difference equation
\begin{equation}
	\tag{$\Delta$}
	\boxed{x_{t+1}=f_t(x_t)}
	\label{deq0}
\end{equation}
is denoted by $\fH(f)$ and equipped with the metric $\bar d$ given in \eqref{noddef}. Notice that in order to apply the results from App.~\ref{secB} one has to define $f(t,x):=f_t(x)$. From $(H_0)$ we see that $f$ is bounded, while $(H_1)$ yields the uniform continuity of $f$ on every compact $K\subset\R^d$. Hence, \lref{lemproplim} implies that both the hull $\fH(f)$, as well as the limit sets $\alpha(f),\omega(f)$ are nonempty compact sets.

Moreover, we say a subset $G\subseteq\fH(f)$ is \emph{admissible}, provided
\begin{itemize}
	\item $\set{\phi\in\ell^\infty\mid\phi_{t+1}\equiv g_t(\phi_t)\text{ on }\Z}=\set{0}$ for all $g\in G$.
\end{itemize}
This means that for every right-hand side $g_t:\Omega\to\R^d$, $t\in\Z$, the only bounded entire solution to $x_{t+1}=g_t(x_t)$ is the trivial one.

In what follows, we will need the next
\begin{lem}\label{lemconvweakII}
	If $(s_n)_{n\in\N}$ is a sequence of integers with $\lim_{n\to\infty}|s_n|=\infty$ and $\phi\in\ell_0$, then the sequence $(\phi^n)_{n\in\N}$ in $\ell_0$ pointwise given by
	$$
		\phi_t^n:=\phi_{t+s_n}\fall t\in\Z
	$$
	fulfills $\phi^n\top{n\to\infty}0$.
\end{lem}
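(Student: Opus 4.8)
The plan is to show $\phi^n \top{n\to\infty} 0$ directly from the definition of pointwise convergence, i.e.\ to fix $t \in \Z$ and prove $\lim_{n\to\infty} \phi_t^n = \lim_{n\to\infty} \phi_{t+s_n} = 0$. Since $\phi \in \ell_0$ means $\lim_{s\to\pm\infty} \phi_s = 0$, the key observation is simply that $t + s_n \to \pm\infty$ as $n \to \infty$ whenever $|s_n| \to \infty$.

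First I would fix an arbitrary $t \in \Z$ and an arbitrary $\eps > 0$. By the defining property of $\ell_0$, there is an $N_0 \in \N$ such that $\abs{\phi_s} < \eps$ for all $s \in \Z$ with $\abs{s} \geq N_0$. Next, since $\lim_{n\to\infty} \abs{s_n} = \infty$, choose $N \in \N$ so large that $\abs{s_n} \geq N_0 + \abs{t}$ for all $n \geq N$. Then for such $n$ the reverse triangle inequality gives $\abs{t + s_n} \geq \abs{s_n} - \abs{t} \geq N_0$, hence $\abs{\phi_t^n} = \abs{\phi_{t+s_n}} < \eps$. As $\eps > 0$ was arbitrary, $\lim_{n\to\infty}\phi_t^n = 0$, and as $t \in \Z$ was arbitrary, this is exactly $\phi^n \top{n\to\infty} 0$.

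There is essentially no obstacle here: the statement is a routine unwinding of definitions, and the only mild point of care is to keep the fixed index $t$ from interfering with the divergence of $s_n$, which is handled by the estimate $\abs{t + s_n} \geq \abs{s_n} - \abs{t}$. One should note that the conclusion is only \emph{pointwise} convergence and not norm convergence in $\ell_0$ — indeed $\norm{\phi^n} = \norm{\phi}$ for every $n$ since $\sS$ is an isometry — so no stronger statement is being claimed and none is needed for the applications (compare \lref{lemcomp}(c), where exactly this kind of shifted pointwise limit appears).
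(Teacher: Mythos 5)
Your argument is correct and is essentially the same as the paper's proof, which simply observes that $\phi\in\ell_0$ forces $\phi_{t+s_n}\to 0$ for each fixed $t$ and that this is precisely pointwise convergence; you have just spelled out the underlying $\eps$--$N$ estimate (with the reverse triangle inequality) that the paper leaves implicit. Your closing remark that only pointwise, not norm, convergence can hold here (since $\sS$ is an isometry) is a sensible sanity check and consistent with the paper's usage.
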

\begin{proof}
	The implications
	\begin{align*}
		\phi\in\ell_0
		&\quad\Rightarrow\quad
		\phi_{t+s_n}\xrightarrow[n\to\infty]{}0\fall t\in\Z\\
		&\quad\Leftrightarrow\quad
		\phi_t^n\xrightarrow[n\to\infty]{}0\fall t\in\Z
		\quad\Leftrightarrow\quad
		\phi^n\top{n\to\infty}0
	\end{align*}
	guarantee the assertion.
\end{proof}
\begin{lem}[properness]\label{lemproper}
	If $\alpha(f),\omega(f)$ are admissible, then $\sG_0:\ell_0(\Omega)\to\ell_0$ is proper on all bounded, closed $B\subset\ell_0(\Omega)$.
\end{lem}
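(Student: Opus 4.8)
The plan is to verify properness by a sequential argument using the compactness characterization of \lref{lemcomp}. Let $B \subset \ell_0(\Omega)$ be bounded and closed, and let $K \subset \ell_0$ be compact; I want to show $\sG_0^{-1}(K) \cap B$ is compact. Take a sequence $(\phi^n)_{n\in\N}$ in $\sG_0^{-1}(K) \cap B$; by passing to a subsequence I may assume $\sG_0(\phi^n) \to \eta$ in $\ell_0$ for some $\eta \in K$. Since $B$ is bounded, by \rref{remtycho}(2) I may also assume $\phi^n \top{n\to\infty} \phi$ for some $\phi \in \ell^\infty$, and the pointwise limit together with $(H_1)$ forces $\phi_{t+1} = f_t(\phi_t) + \eta_t$ on $\Z$, so $\phi$ is an entire solution of a perturbed equation and $\phi \in \ell^\infty(\Omega)$ by closedness considerations; one also checks $\phi \in \ell_0(\Omega)$ using $\eta \in \ell_0$ and $(H_3)$. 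The real work is to upgrade pointwise convergence $\phi^n \top{} \phi$ to norm convergence $\norm{\phi^n - \phi} \to 0$, for then $\phi \in B$ (as $B$ is closed), $\sG_0(\phi) = \eta$ by continuity of $\sG_0$ on $\ell_0(\Omega)$, and $\sG_0^{-1}(K)\cap B$ is sequentially compact, hence compact.

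To get norm convergence I would apply criterion (c) of \lref{lemcomp} to the set $\{\phi^n - \phi : n \in \N\}$ (which is bounded in $\ell_0$), or argue directly: suppose $\norm{\phi^n - \phi} \not\to 0$; then along a subsequence there are integers $t_n$ with $\abs{t_n} \to \infty$ (the interior indices contribute nothing since $\phi^n \top{} \phi$ pointwise and the $\phi^n$ are uniformly bounded) and $\abs{\phi^n_{t_n} - \phi_{t_n}} \geq \eps$ for some $\eps > 0$. Writing $\phi^n_{t_n} - \phi_{t_n} = \ev_0(\sS^{t_n}\phi^n - \sS^{t_n}\phi)$ and noting $\sS^{t_n}\phi \top{} 0$ by \lref{lemconvweakII}, it suffices to show $\sS^{t_n}\phi^n \top{} 0$ along a further subsequence. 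Here I translate the equation: $\sS^{t_n}\phi^n$ satisfies $(\sS^{t_n}\phi^n)_{t+1} = f_{t+t_n}((\sS^{t_n}\phi^n)_t) + (\sS^{t_n}\eta)_t$. The sequence $(\sS^{t_n}\phi^n)$ is bounded in $\ell^\infty$, so by \rref{remtycho}(2) a subsequence converges pointwise to some $\psi \in \ell^\infty(\overline\Omega)$; passing to the hull, the shifted right-hand sides $f_{\cdot + t_n}$ subconverge (in the hull metric $\bar d$) to some $g \in \fH(f)$, and since $\abs{t_n} \to \infty$ this limit $g$ lies in $\alpha(f) \cup \omega(f)$. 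Using $(H_1)$ to pass to the limit in the recursion and $\sS^{t_n}\eta \top{} 0$ (from $\eta \in \ell_0$ and \lref{lemconvweakII}), $\psi$ is a bounded entire solution of $x_{t+1} = g_t(x_t)$. Admissibility of $\alpha(f)$ and $\omega(f)$ then gives $\psi = 0$, so $\ev_0 \sS^{t_n}\phi^n \to 0$, contradicting $\abs{\phi^n_{t_n} - \phi_{t_n}} \geq \eps$.

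The main obstacle I anticipate is the bookkeeping in the hull/translation step: making precise that the shifted coefficient functions $f_{\cdot+t_n}$ converge in $\fH(f)$ along a subsequence to an element of the limit set $\alpha(f)\cup\omega(f)$ (this uses compactness of $\fH(f)$ from \lref{lemproplim} together with the dichotomy $t_n \to +\infty$ versus $t_n \to -\infty$), and that one may pass to the limit in $(\sS^{t_n}\phi^n)_{t+1} = f_{t+t_n}((\sS^{t_n}\phi^n)_t) + (\sS^{t_n}\eta)_t$ jointly in the argument and in the index — this is exactly where the uniform continuity in $(H_1)$ (continuity of $f$ uniformly in $t$ on compacta) is essential, since $(\sS^{t_n}\phi^n)_t \to \psi_t$ and $f_{t+t_n} \to g_t$ simultaneously. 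A secondary, routine point is checking that the limit $\phi$ (and the $\psi$ above) genuinely take values in $\Omega$ rather than merely $\overline\Omega$; for $\phi$ this follows since $\phi \in B \subset \ell_0(\Omega)$ once norm convergence is established, while for $\psi$ one only needs $\psi \in \ell^\infty(\overline\Omega)$ and that $g$ extends continuously — but in fact $\psi = 0 \in \Omega$, so no difficulty remains. Everything else — well-definedness of $\sG_0$ on $\ell_0(\Omega)$, continuity, the elementary limit manipulations — is covered by \cite{poetzsche:08b} and \lref{lemconvweakII}.
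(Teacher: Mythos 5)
Your core mechanism---compact hull $\fH(f)$, subconvergence of the shifted coefficients to an element of $\alpha(f)\cup\omega(f)$, passing to the limit in the recursion via $(H_1)$, admissibility forcing the shifted pointwise limit to vanish---is exactly what the paper uses. But your decision to first identify a pointwise limit $\phi$ of $(\phi^n)$ and then upgrade to norm convergence introduces a genuine gap. At the key step you invoke ``$\sS^{t_n}\phi \top{} 0$ by \lref{lemconvweakII}'' and later implicitly use $\phi_{t_n}\to 0$ to derive the contradiction with $\abs{\phi^n_{t_n}-\phi_{t_n}}\geq\eps$; both of these require $\phi\in\ell_0$, which you have not established. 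The justification you offer---that $\phi\in\ell_0$ follows from $\eta\in\ell_0$ and $(H_3)$---is false: a bounded entire solution of $\phi_{t+1}=f_t(\phi_t)+\eta_t$ need not decay even when $\eta$ and $f_\cdot(0)$ do (take $d=1$, $f_t(x)=x$, $\eta=0$, $\phi_t\equiv 1$). What would actually force $\phi\in\ell_0$ is a second shift/hull argument invoking admissibility, and you also quietly acknowledge elsewhere that $\phi\in\ell_0(\Omega)$ only follows ``once norm convergence is established,'' which is a circularity. There is a further wrinkle you wave off: a priori $\phi$ only takes values in $\overline\Omega$, so the equation $\phi_{t+1}=f_t(\phi_t)+\eta_t$ need not even be well-posed before $\phi\in\ell_0(\Omega)$ is secured.

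The paper avoids all of this by never introducing $\phi$. It shows directly that the sequence $(\phi^n)$ (with $\sG_0(\phi^n)\to\varphi$) vanishes shiftly at $\infty$: for any $(k_n)$, $(s_n)$ with $\abs{s_n}\to\infty$ and $\sS^{s_n}\phi^{k_n}\top{}\psi$, the shifted coefficients subconverge in $\fH(f)$ to some $f^\pm$ in the appropriate limit set, $\psi$ solves the corresponding limit equation, and admissibility gives $\psi=0$. Then \lref{lemcomp}(c)$\Rightarrow$(a) yields relative compactness of $\set{\phi^n}$ in $\ell_0$, producing a norm-convergent subsequence whose limit lies in the closed set $B$; compactness of $\sG_0^{-1}(K)\cap B$ follows. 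Your proof would become correct if, instead of applying \lref{lemcomp}(c) to $\set{\phi^n-\phi}$, you applied it to $\set{\phi^n}$ itself---then the hull/admissibility argument you already sketch closes the proof without ever needing to know anything about $\phi$ in advance.
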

\begin{proof}
	Above all, note that in view of \lref{lemcomp} it suffices to show that any bounded sequence $(\phi^n)_{n\in\N}$ in $\ell_0(\Omega)$ satisfying
	$$
		\norm{\sG_0(\phi^n)-\varphi}\xrightarrow[n\to\infty]{} 0
		\text{ with some }\varphi\in\ell_0,
	$$
	vanishes shiftly at $\infty$. Take any increasing sequence $(k_n)_{n\in\N}$ in $\N$ and any sequence $(s_n)_{n\in\N}$ in $\Z$ satisfying $\lim\limits_{n\to\infty}\abs{s_n}=\infty$ such that
\begin{equation}\label{lemproper1}
	\sS^{s_n}\phi^{k_n}\top{n\to\infty}\psi\text{ with some }\psi\in\ell^{\infty}.
\end{equation}
We must show that $\psi=0$. For this purpose, observe
	\begin{equation}
		\norm{\sS^{s_n}\sG_0(\phi^{k_n})-\sS^{s_n}\varphi}
		=
		\norm{\sG_0(\phi^{k_n})-\varphi}
		\xrightarrow[n\to\infty]{}0
		\label{lemproper2}
	\end{equation}
	and put $f^n:=f(\cdot+s_n,\cdot)\in\fH(f)$. Because $\fH(f)$ is compact, we can deduce that there exists a subsequence $(s_{n_i})_{i\in\N}$ such that
	\begin{equation}
		s_{n_i}>0 \text{ and } \bar d(f^{n_i},f^+)\xrightarrow[i\to\infty]{}0 \text{ for some }f^+\in\omega(f)\subseteq\fH(f)
		\label{lemproper3}
\end{equation}
or
\begin{equation}
	s_{n_i}<0 \text{ and } \bar d(f^{n_i},f^-)\xrightarrow[i\to\infty]{}0 \text{ for some }f^-\in\alpha(f)\subseteq\fH(f).
		\label{lemproper3s}
\end{equation}
In case \eqref{lemproper3} we introduce the following limit operators
	\begin{align*}
		\sF^+:&\ell_0\to\ell_0,\quad
		\sF^+(\phi)_t:=f_t^+(\phi_t),\\
		\sG^+:&\ell_0\to\ell_0,\quad
		\sG^+(\phi)_t:=\phi_{t+1}-f_t^+(\phi_t).
	\end{align*}
Since $\omega(f)$ is admissible, it suffices to prove that $\sG^+(\psi)=0$ and we proceed as follows: First, \eqref{lemproper3} implies that
$$
	f(t+s_{n_i},\psi_t)\xrightarrow[i\to\infty]{}f_t^+(\psi_t)\fall t\in\Z
$$
and \eqref{lemproper1} with \eqref{uniformly} leads to
$$
	f(t+s_{m_i},\phi^{k_i}_{t+s_{n_i}})-f(t+s_{n_i},\psi_t)\xrightarrow[i\to\infty]{} 0
	\fall t\in\Z.
$$
Second, \eqref{lemproper2} leads to
$$
	\left(\phi_{t+1+s_i}^i-f\left(t+s_{n_i},\phi^{k_i}_{t+s_{n_i}}\right)\right)-\varphi_{t+s_{n_i}}\xrightarrow[i\to\infty]{} 0\fall t\in\Z,
$$
while \lref{lemconvweakII} and \eqref{lemproper1} guarantee
$$
	\varphi_{t+s_{n_i}} \xrightarrow[n\to\infty]{} 0
	\text{ and }
	\phi_{t+1+s_{n_i}}^{k_i} \xrightarrow[i\to\infty]{}\psi_{t+1}\fall t\in\Z.
$$
Finally from the above we deduce that
\begin{align*}
\phi_{t+1+s_{n_i}}^{k_i}
-f(t+s_{n_i},\psi_t) \xrightarrow[i\to\infty]{} 0
\end{align*}
and
\begin{align*}
\phi_{t+1+s_{n_i}}^{k_i}
-f(t+s_{n_i},\psi_t) \xrightarrow[i\to\infty]{} \psi_{t+1}-f_t^+(\psi_t).
\end{align*}
	Hence, we infer that $\sG_0(\psi)=0$. Since the dual case \eqref{lemproper3s} can be treated similarly, the admissibility of $\alpha(f)$ completes the proof.
\end{proof}

Our further analysis is based on the substitution operators
\begin{align}
	\sF(\phi,\lambda)_t:=f_t(\phi_t,\lambda),\quad\quad
	\sF^j(\phi,\lambda)_t:=D_1^jf_t(\phi_t,\lambda)
	\fall t\in\Z,
	\label{nofdef}
\end{align}
$\phi\in\ell_0(\Omega)$, $\lambda\in\Lambda$ and indices $j\in\set{0,1}$, whose properties are as follows:
\begin{prop}\label{propF}
	The operator $\sF:\ell_0(\Omega)\tm\Lambda\to\ell_0$ is well-defined with the following properties for every $\phi\in\ell_0(\Omega)$, $\lambda\in\Lambda$:
	\begin{enumerate}
		\item $\sF^j:\ell_0(\Omega)\tm\Lambda\to L_j(\ell_0)$ is continuous for $j\in\set{0,1}$

		\item $D_1\sF:\ell_0(\Omega)\tm\Lambda\to L(\ell_0)$ exists with $D_1\sF(\phi,\lambda)=\sF^1(\phi,\lambda)$

		\item $D_1\sF(\phi,\lambda)-D_1\sF(\phi^\ast,\lambda)\in L(\ell_0)$ is compact.
	\end{enumerate}
\end{prop}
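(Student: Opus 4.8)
The plan is to treat each of the three assertions in turn, with the bulk of the work going into the well-definedness and the continuity in part~(a), since parts~(b) and~(c) follow by softer arguments once~(a) is in place.

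\emph{Well-definedness and part (a).} First I would fix $\lambda\in\Lambda$ and apply the already-quoted results from \cite{poetzsche:08b} (\text{Prop.~2.4, Thm.~2.5}, used above for $\sF_0,\sG_0$) to the $\lambda$-frozen maps $f_t(\cdot,\lambda)$ to conclude that each $\sF(\cdot,\lambda)$ maps $\ell_0(\Omega)$ into $\ell_0$ and is $C^1$ in $\phi$ with derivative $\sF^1(\cdot,\lambda)$; the hypotheses $(H_0)$--$(H_3)$ are precisely the uniform-in-$t$ conditions those theorems require. For the joint continuity of $\sF^j$ in $(\phi,\lambda)$, I would argue sequentially: given $(\phi^n,\lambda_n)\to(\phi,\lambda)$ in $\ell_0(\Omega)\tm\Lambda$, the sequence $(\phi^n)$ lies in a bounded set with $\abs{\phi_t^n}\le\beta_t$ for some $\beta\in\ell_0(\R)$ (using \lref{lemcomp}, since $\set{\phi^n}\cup\set{\phi}$ is relatively compact), so all the relevant arguments $(\phi_t^n,\lambda_n)$ stay in a fixed compact $K\subset\Omega\tm\Lambda$. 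On a tail $\abs{t}>T$ the values $\beta_t$ are small, and by $(H_2)$ (for $j=1$) resp.\ $(H_1)$ (for $j=0$) together with continuity of $D_1f_t$ at the point $0$ and $(H_3)$, the quantities $\abs{D_1^jf_t(\phi_t^n,\lambda_n)-D_1^jf_t(\phi_t,\lambda)}$ are uniformly small there; on the finite block $\abs{t}\le T$ one uses the explicit equicontinuity estimates in $(H_1)$, $(H_2)$ to get smallness for $n$ large. Taking the supremum over $t$ yields $\norm{\sF^j(\phi^n,\lambda_n)-\sF^j(\phi,\lambda)}_{L_j(\ell_0)}\to 0$. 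This tail/block splitting, forced by the fact that $\ell_0$-convergence gives no uniform control of $\phi^n_t$ over all $t$ at once, is the step I expect to be the main obstacle, and it is where $(H_3)$ is indispensable.

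\emph{Part (b).} Once $\sF^1$ is continuous, differentiability of $\sF$ with respect to $\phi$ with $D_1\sF(\phi,\lambda)=\sF^1(\phi,\lambda)$ is essentially the fiberwise statement from \cite{poetzsche:08b} upgraded by the mean value inequality: for $\phi,\phi+h\in\ell_0(\Omega)$ with $\norm h$ small one writes, componentwise,
$$
	f_t(\phi_t+h_t,\lambda)-f_t(\phi_t,\lambda)-D_1f_t(\phi_t,\lambda)h_t
	=\int_0^1\bigl(D_1f_t(\phi_t+\theta h_t,\lambda)-D_1f_t(\phi_t,\lambda)\bigr)h_t\,d\theta,
$$
and estimates the $\ell_0$-norm of the left-hand side by $\sup_{t\in\Z}\sup_{0\le\theta\le 1}\abs{D_1f_t(\phi_t+\theta h_t,\lambda)-D_1f_t(\phi_t,\lambda)}\cdot\norm h$. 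The supremum tends to $0$ as $\norm h\to 0$ by the equicontinuity clause of $(H_2)$ (applied with $\lambda_0=\lambda$), restricted to the compact set of arguments supplied by \lref{lemcomp}. Hence $\sF(\cdot,\lambda)$ is Fr\'echet differentiable with the claimed derivative, and continuity of $\phi\mapsto D_1\sF(\phi,\lambda)$ is part~(a) with $j=1$.

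\emph{Part (c).} For the compactness of $D_1\sF(\phi,\lambda)-D_1\sF(\phi^\ast,\lambda)$ I would show its image of the closed unit ball $\overline B_{\ell_0}$ is relatively compact via criterion~(b) of \lref{lemcomp}. For $h\in\overline B_{\ell_0}$ and $t\in\Z$,
$$
	\abs{\bigl(D_1\sF(\phi,\lambda)h-D_1\sF(\phi^\ast,\lambda)h\bigr)_t}
	=\abs{\bigl(D_1f_t(\phi_t,\lambda)-D_1f_t(\phi_t^\ast,\lambda)\bigr)h_t}
	\le\abs{D_1f_t(\phi_t,\lambda)-D_1f_t(\phi_t^\ast,\lambda)}=:\beta_t.
$$
It remains to check $\beta\in\ell_0(\R)$. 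Since $\phi,\phi^\ast\in\ell_0$ both have limit $0$ at $\pm\infty$, for any $\eps>0$ there is $T$ with $\abs{\phi_t},\abs{\phi^\ast_t}<\delta$ for $\abs t>T$, where $\delta$ is the modulus from the equicontinuity part of $(H_2)$ at $\lambda_0=\lambda$; applying that clause with $x_1=\phi^\ast_t$, $x_2=\phi_t$ gives $\abs{D_1f_t(\phi_t,\lambda)-D_1f_t(\phi_t^\ast,\lambda)}<\eps$ for $\abs t>T$, so $\beta_t\to 0$. By \lref{lemcomp}(b)$\Rightarrow$(a) the set $\set{D_1\sF(\phi,\lambda)h-D_1\sF(\phi^\ast,\lambda)h:\norm h\le 1}$ is relatively compact, i.e.\ the operator is compact, completing the proof.
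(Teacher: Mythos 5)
Your proposal is correct, and for parts~(a) and~(b) it matches the paper: there the authors simply cite \cite[Prop.~2.4]{poetzsche:08b} for well-definedness, continuity of $\sF^j$, and the derivative formula, and your sketch fills in how those arguments go (tail/block splitting for joint continuity, mean value estimate for differentiability).

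For part~(c), however, you take a genuinely different route. You and the paper both reduce to the observation that $A_t := D_1f_t(\phi_t,\lambda)-D_1f_t(\phi_t^\ast,\lambda)\to 0$ as $\abs{t}\to\infty$ (both via the equicontinuity clause of $(H_2)$ and $\phi,\phi^\ast\in\ell_0$). But from there the paper writes the multiplication operator $\sM\psi:=(A_t\psi_t)_{t\in\Z}$ as the operator-norm limit of the finite-rank truncations $\sM_k$ (supported on $[-k,k]$), using $\norm{\sM-\sM_k}\le\sup_{k<\abs{t}}\abs{A_t}\to 0$, and concludes compactness from the closedness of the compact operators under norm limits. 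You instead invoke \lref{lemcomp}(b)$\Rightarrow$(a) directly: the image of the closed unit ball is dominated pointwise by $\beta:=(\abs{A_t})_{t\in\Z}\in\ell_0(\R)$, hence relatively compact. Since \lref{lemcomp} is itself proven via the truncated measure of noncompactness $\chi(B)=\lim_n\sup_{\phi}\sup_{n<\abs{t}}\abs{\phi_t}$, the two arguments are closely related, but yours is more economical given that the lemma is already at hand, while the paper's is self-contained and exhibits the finite-rank approximants explicitly. One trivial slip in your part~(c): requiring $\abs{\phi_t},\abs{\phi_t^\ast}<\delta$ only gives $\abs{\phi_t-\phi_t^\ast}<2\delta$, so you should take $\delta/2$ there (the paper's own write-up of this point is similarly loose).
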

\begin{proof}
	(a) and (b) were essentially shown in \cite[Prop.~2.4]{poetzsche:08b}, so is the well-definedness of $\sF$.

	(c): Since linear combinations of compact operators are compact (see \cite[p.~278, Thm.~(i)]{yosida:80}) it suffices to show that $D_1\sF(\phi,\lambda)-D_1\sF(\phi^\ast,\lambda)$ is compact for all $\phi\in\ell_0(\Omega)$ and $\lambda\in\Lambda$. Due to the representation (cf.~\eqref{nofdef})
	$$
		[(D_1\sF(\phi,\lambda)-D_1\sF(\phi^\ast,\lambda))\psi]_t
		=
		\underbrace{(D_1f_t(\phi_t,\lambda)-D_1f_t(\phi_t^\ast,\lambda))}_{=:A_t}\psi_t
		\fall t\in\Z
	$$
	and $\psi\in\ell_0$ we see that $D_1\sF(\phi,\lambda)-D_1\sF(\phi^\ast,\lambda)$ is a multiplication operator $\sM\in L(\ell_0)$, $(\sM\psi)_t:=A_t\psi_t$. To establish its compactness, let $\eps>0$. Thanks to $(H_1)$ there exists a $\delta>0$ such that $\abs{x}<\delta$ implies
	$$
		\abs{D_1f_t(x,\lambda)-D_1f_t(\phi_t^\ast,\lambda)}<\eps\fall t\in\Z.
	$$
	Hence, because of $\phi\in\ell_0(\Omega)$ we find a $T\in\Z$ with $\abs{\phi_t}<\delta$ and therefore
	$$
		\abs{A_t}=\abs{D_1f_t(\phi_t,\lambda)-D_1f_t(\phi_t^\ast,\lambda)}<\eps
		\fall T\leq\abs{t},
	$$
	which implies that $\lim_{t\to\pm\infty}A_t=0$. It remains to show that $\sM\in L(\ell_0)$ is compact. Thereto, consider the sequence of compact operators $\sM_k\in L(\ell_0)$,
	$$
		(\sM_k\psi)_t
		:=
		\begin{cases}
			A_t\psi_t,&t\in[-k,k]\cap\Z,\\
			0,&\text{else}
		\end{cases}
		\fall k\in\N
	$$
	satisfying \[[(\sM-\sM_k)\psi]_t=\sup_{k<\abs{t}}\abs{A_t\psi_t}\leq\sup_{k<\abs{t}}\abs{A_t}\norm{\psi}\] for all $t\in\Z$. This yields that $\lim\limits_{k\to\infty}\norm{\sM-\sM_k}=0$, $\sM$ is the uniform limit of a sequence of compact operators and thus compact (cf.~\cite[p.~278, Thm.~(iii)]{yosida:80}).
\end{proof}
\section{Entire hyperbolic solutions}
\label{sec4}
Let us consider the linear difference equation
\begin{equation}
	\tag{$V_\lambda$}
	\boxed{x_{t+1}=D_1f_t(\phi_t^\ast,\lambda)x_t,}
	\label{var}
\end{equation}
with dichotomy spectra denoted by $\Sigma(\lambda)$ and $\Sigma^-(\lambda),\Sigma^+(\lambda)$ for $\lambda\in\Lambda$. Since $\phi^\ast$ needs not to be a solution to \eqref{deq}, note that in general only $(V_{\lambda^\ast})$ is a variational equation.

In case $1\not\in\Sigma(\lambda^\ast)$ it follows from the usual local implicit function theorem that there is a neighborhood $\Lambda_0\subseteq\Lambda$ of $\lambda^\ast$ and a continuous function $\phi:\Lambda_0\to\ell_0$ (the local branch) such that $\phi(\lambda)$ is the unique homoclinic solution to \eqref{deq} (see \cite[Thm.~2.17]{poetzsche:08b}) in a neighborhood of $(\phi^\ast,\lambda^\ast)$. In the following, we are interested in the global structure of the component
$$
	C\subseteq\set{(\phi,\lambda)\in\ell_0(\Omega)\tm\Lambda\mid\phi_{t+1}\equiv f_t(\phi_t,\lambda)\text{ on }\Z}
$$
containing the pair $(\phi^\ast,\lambda^\ast)$. A continuation result for homoclinic solutions to \eqref{deq} relies on an immediate but crucial tool for our overall approach:
\begin{lem}\label{lemgB}
	Let $\lambda\in\Lambda$. A sequence $\phi\in\ell_0(\Omega)$ solves the difference equation \eqref{deq} if and only if $\phi$ satisfies the nonlinear operator equation
	\begin{equation}
		\tag{$O_\lambda$}
		\sG(\phi,\lambda)=0
		\label{opiB}
	\end{equation}
	with the operator $\sG:\ell_0(\Omega)\tm\Lambda\to\ell_0$ given by $\sG(\phi,\lambda):=\sS\phi-\sF(\phi,\lambda)$.
\end{lem}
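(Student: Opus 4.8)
The plan is to verify the asserted equivalence by simply unwinding the definitions of $\sS$, $\sF$ and $\sG$; the only point deserving a remark is that the operator $\sG$ genuinely takes values in $\ell_0$ rather than merely in $\ell^\infty$.

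First I would record the well-definedness of $\sG$. For $\phi\in\ell_0(\Omega)$ and $\lambda\in\Lambda$ one has $\sF(\phi,\lambda)\in\ell_0$ by \pref{propF}, and $\sS\phi\in\ell_0$ because the left shift is a bounded (in fact isometric, hence invertible) operator on $\ell_0$. Consequently $\sG(\phi,\lambda)=\sS\phi-\sF(\phi,\lambda)$ lies in $\ell_0$, so $\sG\colon\ell_0(\Omega)\tm\Lambda\to\ell_0$ is well-defined.

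Next, for fixed $\phi\in\ell_0(\Omega)$ and $\lambda\in\Lambda$ I would compute the components of $\sG(\phi,\lambda)$. By the definition of the left shift, $(\sS\phi)_t=\phi_{t+1}$, and by \eqref{nofdef}, $\sF(\phi,\lambda)_t=f_t(\phi_t,\lambda)$, whence
$$
	\sG(\phi,\lambda)_t=\phi_{t+1}-f_t(\phi_t,\lambda)\fall t\in\Z.
$$
Since an element of $\ell_0$ is zero if and only if each of its components vanishes, the equation $\sG(\phi,\lambda)=0$ is equivalent to $\phi_{t+1}=f_t(\phi_t,\lambda)$ for all $t\in\Z$. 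As $\phi\in\ell_0(\Omega)$ is by definition a sequence in $\Omega$ with $\lim_{t\to\pm\infty}\phi_t=0$, this says exactly that $\phi$ is a homoclinic solution to \eqref{deq}, which establishes the claim.

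There is no serious obstacle here: the statement is essentially a reformulation that transfers the study of \eqref{deq} into the functional-analytic setting, and its entire substantive content — that substitution along a sequence in $\ell_0(\Omega)$ produces a sequence in $\ell_0$ — has already been supplied by \pref{propF}.
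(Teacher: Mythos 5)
Your proposal is correct and takes the same route as the paper: well-definedness is delegated to \pref{propF}, and the equivalence is obtained by unwinding the definitions of $\sS$ and $\sF$ componentwise. The paper states this more tersely ("The equivalence statement is clear"), but your expanded write-up is exactly the argument it has in mind.
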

\begin{proof}
	The well-definedness of $\sG$ immediately follows from \pref{propF}. The equivalence statement is clear.
\end{proof}
By means of \pref{propF} our assumptions imply that the partial derivative
$$
	D_1\sG:\ell_0(\Omega)\tm\Lambda\to L(\ell_0)
$$
exists as a continuous function of the form
$$
	D_1\sG(\phi,\lambda)\psi
	=
	\sS\psi-D_1\sF(\phi,\lambda)\psi\fall\psi\in\ell_0
$$
and possesses the following properties:
\begin{lem}\label{lemadmiss}
	For all $\phi\in\ell_0(\Omega)$ and $\lambda\in\Lambda$ one has:
	\begin{enumerate}
		\item $D_1\sG(\phi^\ast,\lambda^\ast)\in GL(\ell_0)\Leftrightarrow 1\not\in\Sigma(\lambda^\ast)$

		\item $1\not\in\Sigma(\lambda^\ast)\Rightarrow D_1\sG(\phi,\lambda^\ast)\in F_0(\ell_0)$

		\item $D_1\sG(\phi^\ast,\lambda)\in F_0(\ell_0)\Leftrightarrow D_1\sG(\phi,\lambda)\in F_0(\ell_0)$
	\end{enumerate}
\end{lem}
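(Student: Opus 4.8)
The plan is to reduce all three assertions to the Fredholm properties of the operators $\eL_B$ from \lref{lempalmer}, via the elementary identity
\begin{equation*}
	D_1\sG(\phi,\lambda)=\sS\circ\eL_{B},\qquad B_t:=D_1f_t(\phi_t,\lambda),
\end{equation*}
checked by evaluating both sides at an arbitrary $t\in\Z$; here $(B_t)_{t\in\Z}$ is bounded thanks to $(H_2)$ and $\phi\in\ell_0(\Omega)$. Since $\sS\in GL(\ell_0)$ is an isometric isomorphism, pre-composition with $\sS$ alters neither invertibility, nor the Fredholm property, nor the Fredholm index; all of these therefore pass back and forth between $D_1\sG(\phi,\lambda)$ and $\eL_B$. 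For $\phi=\phi^\ast$ the coefficients $B_t=D_1f_t(\phi_t^\ast,\lambda)$ are exactly those of the variational equation \eqref{var}, whose dichotomy spectra are $\Sigma(\lambda)$ and $\Sigma^\pm(\lambda)$.

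For (a), this turns the claim into the assertion that, with $A_t^\ast:=D_1f_t(\phi_t^\ast,\lambda^\ast)$, one has $\eL_{A^\ast}\in GL(\ell_0)$ if and only if $(V_{\lambda^\ast})$ admits an exponential dichotomy on $\Z$ --- which by definition of the all-time spectrum means $1\not\in\Sigma(\lambda^\ast)$. For ``$\Leftarrow$'' one uses the ED on $\Z$ to write down the exponentially decaying Green's function, which furnishes a bounded two-sided inverse of $\eL_{A^\ast}$ that maps $\ell_0$ into $\ell_0$ because of its decay. For ``$\Rightarrow$'', invertibility of $\eL_{A^\ast}$ makes it Fredholm of index $0$, so \lref{lempalmer} provides EDs on both half axes with $\rk P_0^+=\rk P_0^-$; triviality of $N(\eL_{A^\ast})$ forces $R(P_0^-)\cap N(P_0^+)=\set{0}$, a dimension count then gives $\R^d=R(P_0^-)\oplus N(P_0^+)$, and the classical criterion for assembling two half-line dichotomies into one on $\Z$ applies. (Alternatively the whole equivalence may simply be cited from the literature on admissibility and exponential dichotomies.)

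For (c), from $D_1\sG(\phi,\lambda)\psi=\sS\psi-D_1\sF(\phi,\lambda)\psi$ one gets by subtraction
\begin{equation*}
	D_1\sG(\phi,\lambda)-D_1\sG(\phi^\ast,\lambda)=D_1\sF(\phi^\ast,\lambda)-D_1\sF(\phi,\lambda),
\end{equation*}
which is compact by \pref{propF}(c). Since the Fredholm property and the Fredholm index are invariant under compact perturbations (and $K$ is compact iff $-K$ is), membership in $F_0(\ell_0)$ transfers in both directions. Finally (b) follows at once: if $1\not\in\Sigma(\lambda^\ast)$, then (a) gives $D_1\sG(\phi^\ast,\lambda^\ast)\in GL(\ell_0)\subseteq F_0(\ell_0)$, and (c) with $\lambda=\lambda^\ast$ propagates this to $D_1\sG(\phi,\lambda^\ast)\in F_0(\ell_0)$ for all $\phi\in\ell_0(\Omega)$.

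The only genuine obstacle is the ``$\Rightarrow$'' half of (a): recovering an honest exponential dichotomy on all of $\Z$ from invertibility of $\eL_{A^\ast}$ on $\ell_0$. \lref{lempalmer} hands over the two half-line dichotomies and the right dimensions for free, but the transversality of the stable and unstable bundles at $t=0$ --- equivalently, the absence of nonzero entire solutions of $(V_{\lambda^\ast})$ in $\ell_0$ --- must be extracted from $N(\eL_{A^\ast})=\set{0}$ before the standard gluing lemma can be invoked; the remaining steps are bookkeeping with the isomorphism $\sS$ and the compact-perturbation stability of the index.
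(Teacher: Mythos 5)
Your proof is correct, and for parts (b) and (c) it is essentially the paper's argument (compact perturbation of a Fredholm operator preserving index~$0$, via \pref{propF}(c)), merely with the logical order reversed: you establish (c) first and deduce (b) from (a)+(c), whereas the paper proves (b) directly and then remarks that (c) ``follows as in the proof of (b).'' This is a cosmetic difference.

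Where you genuinely diverge is part (a). The paper disposes of the equivalence $D_1\sG(\phi^\ast,\lambda^\ast)\in GL(\ell_0)\Leftrightarrow 1\not\in\Sigma(\lambda^\ast)$ with a single citation to Aulbach--Van~Minh. You instead unwind the statement through the factorization $D_1\sG(\phi,\lambda)=\sS\circ\eL_B$ and \lref{lempalmer}, sketching both directions: the ED on $\Z$ furnishing the Green's-function inverse, and the gluing of two half-line dichotomies once triviality of the kernel is fed in. This is a legitimate and more self-contained route, and you correctly flag the nontrivial ``$\Rightarrow$'' half as the delicate step. One small correction in that sketch, though: with the paper's ED convention \eqref{edest}, the stable subspace at time~$0$ over $\Z_0^+$ is $R(P_0^+)$ and the subspace of initial data decaying backward over $\Z_0^-$ is $N(P_0^-)$, so the relevant isomorphism is $N(\eL_{A^\ast})\cong R(P_0^+)\cap N(P_0^-)$, and the transversality to be extracted is $\R^d=R(P_0^+)\oplus N(P_0^-)$; you have the two subscripts interchanged. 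The factorization through $\sS$ (an isometric isomorphism of $\ell_0$) is a clean observation that the paper leaves implicit.
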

\begin{proof}
	(a): For fixed $\lambda\in\Lambda$ this is a consequence of \cite[Thm.~2, Cor.~3]{aulbach:minh:96}.

	(b): Due to (a) one has $D_1\sG(\phi^\ast,\lambda^\ast)\in GL(\ell_0)$. This obviously guarantees $D_1\sG(\phi^\ast,\lambda^\ast)\in F_0(\ell_0)$ and combining
	\begin{align*}
		D_1\sG(\phi,\lambda^\ast)
		&=
		D_1\sG(\phi^\ast,\lambda^\ast)+D_1\sG(\phi,\lambda^\ast)-D_1\sG(\phi^\ast,\lambda^\ast)\\
		&=
		D_1\sG(\phi^\ast,\lambda^\ast)+D_1\sF(\phi,\lambda^\ast)-D_1\sF(\phi^\ast,\lambda^\ast)\fall\phi\in\ell_0(\Omega)
	\end{align*}
	with \pref{propF}(c) shows that $D_1\sG(\phi,\lambda^\ast)$ is a compact perturbation of a Fredholm operator with index $0$. Since compact perturbations do not affect the index (see \cite[p.~161, Thm.~16]{mueller:07}), we obtain $D_1\sG(\phi,\lambda^\ast)\in F_0(\ell_0)$.

	(c): The nontrivial implication assumes $D_1G(\phi^\ast,\lambda)\in F_0(\ell_0)$ and
	$$
		D_1\sG(\phi,\lambda)
		=
		D_1\sG(\phi^\ast,\lambda)+D_1\sF(\phi,\lambda)-D_1\sF(\phi^\ast,\lambda)
		\fall\phi\in\ell_0(\Omega),\,\lambda\in\Lambda
	$$
	represents $D_1\sG(\phi,\lambda)$ as compact perturbation of $D_1\sG(\phi^\ast,\lambda)$. The claim follows as in the proof of (b).
\end{proof}

While this already settles our required Fredholm theory, we continue with a general criterion for the properness for $\sG$. It is based on concepts from topological dynamics introduced in App.~\ref{secB}. In particular, slightly modifying the notation there, rather than $\alpha(f(\cdot,\lambda))$ and $\omega(f(\cdot,\lambda))$, in order to emphasize the parameter dependence, we write $\alpha(\lambda)$ resp.\ $\omega(\lambda)$ to denote the limit sets of the right-hand side to \eqref{deq} for $\lambda\in\Lambda$.
\begin{prop}[properness]\label{propropI}
	If $\alpha(\lambda),\omega(\lambda)$ are admissible for all $\lambda\in\Lambda$, then $\sG\colon\ell_0(\Omega)\tm\Lambda\to\ell_0$ is proper on every product $B\tm\Lambda_0$ with $B\subset\ell_0(\Omega)$ bounded, closed and $\Lambda_0\subseteq\Lambda$ compact.
\end{prop}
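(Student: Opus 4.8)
The plan is to reduce \pref{propropI} to the parameter-free properness statement \lref{lemproper} by a compactness-and-diagonalization argument. Fix a bounded, closed $B\subset\ell_0(\Omega)$ and a compact $\Lambda_0\subseteq\Lambda$, and take a sequence $(\phi^n,\lambda^n)_{n\in\N}$ in $B\tm\Lambda_0$ together with some $\varphi\in\ell_0$ such that $\sG(\phi^n,\lambda^n)\to\varphi$. By \lref{lemcomp} it suffices to show that a suitable subsequence of $(\phi^n)$ converges in $\ell_0$; by compactness of $\Lambda_0$ we may pass to a subsequence (not relabelled) with $\lambda^n\to\lambda^0\in\Lambda_0$. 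The first step is then to replace $\sG(\phi^n,\lambda^n)$ by $\sG(\phi^n,\lambda^0)$: using the uniform-continuity estimate $(H_1)$ (applied on a compact set containing $\overline{B}$'s range and $\Lambda_0$) together with $\lambda^n\to\lambda^0$ one gets $\norm{\sF(\phi^n,\lambda^n)-\sF(\phi^n,\lambda^0)}\to 0$, hence $\sG(\phi^n,\lambda^0)\to\varphi$ as well. Thus $(\phi^n)$ is a bounded sequence in $\ell_0(\Omega)$ on which the fixed-parameter operator $\sG(\cdot,\lambda^0)$ is asymptotically $\varphi$-valued.

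The second step is to invoke \lref{lemproper} for the frozen equation $x_{t+1}=f_t(x_t,\lambda^0)$: its associated operator is exactly $\sG_0=\sG(\cdot,\lambda^0)$ up to the trivial identification $(\sS\phi)_t=\phi_{t+1}$, and the hypothesis of \pref{propropI} guarantees that $\alpha(\lambda^0)=\alpha(f(\cdot,\lambda^0))$ and $\omega(\lambda^0)=\omega(f(\cdot,\lambda^0))$ are admissible. Hence $\sG(\cdot,\lambda^0)$ is proper on bounded closed subsets of $\ell_0(\Omega)$, and in particular $(\phi^n)$ has a subsequence converging to some $\phi^0\in\ell_0$. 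Since $B$ is closed, $\phi^0\in B\subseteq\ell_0(\Omega)$, and continuity of $\sG$ (\lref{lemgB}, \pref{propF}) gives $\sG(\phi^0,\lambda^0)=\varphi$. Therefore $\sG^{-1}(K)$ is compact for every compact $K\subseteq\ell_0$, which is the asserted properness on $B\tm\Lambda_0$.

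The main obstacle is the passage from varying parameter $\lambda^n$ to the fixed limit parameter $\lambda^0$ in a way that is genuinely uniform in $t\in\Z$; this is where hypothesis $(H_1)$ is essential, and one has to be a little careful that the relevant arguments $(\phi^n_t,\lambda^n)$ all lie in a single compact subset of $\Omega\tm\Lambda$ — which follows from boundedness of $B$ (so the closure of $\set{\phi^n_t : n\in\N,\,t\in\Z}$ is compact in $\overline{\Omega}$, but one needs it inside $\Omega$; this is handled by noting that $(H_1)$ is stated for $K\cap(\Omega\tm\Lambda)$, so no interior issue arises) and compactness of $\Lambda_0$. A secondary, purely bookkeeping point is that \lref{lemproper} is phrased for sequences $(\phi^n)$ together with shift indices $(s_n)$ and a subsequence $(k_n)$; here we only need the special case $s_n\equiv 0$, $k_n=n$, which is precisely the formulation ``proper on bounded, closed $B$'' that \lref{lemproper} delivers, so no extra work is required beyond quoting it. Everything else is routine: continuity of $\sG$ and closedness of $B$ give that the limit $(\phi^0,\lambda^0)$ lies in $B\tm\Lambda_0$ and maps to $\varphi$.
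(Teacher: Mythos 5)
Your argument is correct and follows essentially the same route as the paper's proof: pass to a subsequence with $\lambda^n\to\lambda^0$ in the compact set $\Lambda_0$, use $(H_1)$ and the boundedness of $B$ to show $\norm{\sF(\phi^n,\lambda^n)-\sF(\phi^n,\lambda^0)}\to 0$ and hence transfer the convergence of $\sG(\phi^n,\lambda^n)$ to the frozen operator $\sG(\cdot,\lambda^0)$, then invoke \lref{lemproper}. The only cosmetic difference is that the paper first extracts a subsequence along which the values $\sG(\phi^n,\lambda_n)\in K$ converge (rather than assuming convergence at the outset), and the reduction of properness to sequential compactness is via the metric-space characterization rather than \lref{lemcomp}; neither affects correctness.
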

\begin{proof}
	By \pref{propF} the function $\sG:\ell_0(\Omega)\tm\Lambda\to\ell_0$ is continuous.
%
%
	Let $B\subset\ell_0(\Omega)$ be closed, bounded and suppose $\Lambda_0\subset\Lambda$ is compact. Then $\sG$ is proper on such $B\tm\Lambda_0$, if and only if, for all compact $K\subset\ell_0$ the set $\sG^{-1}(K)\cap (B\tm\Lambda_0)$ is compact. This is equivalent to the fact that for all such $K\subset\ell_0$, any sequence in $\sG^{-1}(K)\cap (B\tm\Lambda_0)$ admits a convergent subsequence. Thus, take a compact subset $K\subset\ell_0$ and let $((\phi^n,\lambda_n))_{n\in\N}$ be a sequence in $\sG^{-1}(K)\cap (B\tm\Lambda_0)$. Since $K$ is compact, there exists a $\psi\in\ell_0$ and a subsequence $((\phi^{n_{i}},\lambda_{n_{i}}))_{i\in\N}$ such that
	\begin{equation}\label{proprop1}
		\|\sG(\phi^{n_{i}},\lambda_{n_{i}})-\psi\|\xrightarrow[i\to\infty]{} 0.
	\end{equation}
	Because $\Lambda_0$ is compact, one finds a convergent subsequence $(\lambda_{n_{i_j}})_{j\in \N}$ with limit $\lambda_0\in\Lambda_0$. Using \lref{lemproper}, $\sG(\cdot,\lambda_0)$ is proper on the bounded, closed subsets
of $\ell_0$ and we are about to prove
	$$
		\norm{\sG(\phi^{n_{i_j}},\lambda_0)-\psi}
		\xrightarrow[j\to\infty]{} 0.
	$$
	Thereto, we have from the triangle inequality and \lref{lemgB} that
	\begin{align*}
		\norm{\sG(\phi^{n_{i_j}},\lambda_0)-\psi}
		&\leq
		\left\|\sG(\phi^{n_{i_j}},\lambda_0)-\sG(\phi^{n_{i_j}},\lambda_{n_{i_j}})\right\|
		+
		\left\|\sG(\phi^{n_{i_j}},\lambda_{n_{i_j}})-\psi\right\|\\
		&=
		\norm{\sF(\phi^{n_{i_j}},\lambda_0)-\sF(\phi^{n_{i_j}},\lambda_{n_{i_j}})}
		+
		\left\|\sG(\phi^{n_{i_j}},\lambda_{n_{i_j}})-\psi\right\|
	\end{align*}
	for all $j\in\N$ and with a view to \eqref{proprop1} it remains to establish
	\begin{equation}
		\norm{\sF(\phi^{n_{i_j}},\lambda_0)-\sF(\phi^{n_{i_j}},\lambda_{n_{i_j}})}
		\xrightarrow[j\to\infty]{} 0.
		\label{proprop3}
	\end{equation}
	Indeed, since $K\subset\ell_0$ is bounded, it follows that there exists $M>0$ such that $|\phi_t^{n_{i_j}}|\leq M$ for all $t\in\Z$ and $j\in\N$. Consequently, $(H_1)$ implies that
	\begin{equation*}
		\abs{f_t(\phi_t^{n_{i_j}},\lambda_{n_{i_j}})-f_t(\phi_t^{n_{i_j}},\lambda_0)}
		\xrightarrow[j\to\infty]{}
		0\quad\text{uniformly in }t\in\Z,
	\end{equation*}
	and \eqref{nofdef} leads to \eqref{proprop3}. Finally, since $\sG(\cdot,\lambda_0)$ is proper, it follows that also $(\phi^{n_{i_j}})_{j\in \N}$ has a convergent subsequence, which guarantees a convergent subsequence of $((\phi^{n_{i}},\lambda_{n_{i}}))_{i\in\N}$. This completes the proof.
\end{proof}

We arrive at our main result, which supplements the local continuation property of \cite[Thm.~2.17]{poetzsche:08b}, but requires a real parameter $\lambda$.
\begin{thm}[global continuation in $\ell_0$]\label{thmglobdeq}
	Beyond $(H_0)$-$(H_3)$ let us assume for all $\lambda\in\Lambda$:
	\begin{itemize}
		\item[(i)] The linear equations \eqref{var} satisfy
		\begin{align}
			1\not\in\Sigma(\lambda^\ast),\quad\quad
			1\not\in\Sigma^+(\lambda),\quad\quad
			1\not\in\Sigma^-(\lambda)
			\label{thmglobdeq2}
		\end{align}
		with corresponding invariant projectors such that $\rk P_0^+(\lambda)=\rk P_0^-(\lambda)$

		\item[(ii)] $\alpha(\lambda),\omega(\lambda)$ are admissible.
	\end{itemize}
	If $C\subseteq\ell_0(\Omega)\tm\Lambda$ denotes the component of homoclinic solutions to \eqref{deq} containing $(\phi^\ast,\lambda^\ast)$ and
	\begin{align*}
		C_-:=\set{(\phi,\lambda)\in C\mid\lambda\leq\lambda^\ast},\quad\quad
		C_+:=\set{(\phi,\lambda)\in C\mid \lambda^\ast\leq\lambda},
	\end{align*}
	then (at least) one the following alternatives applies (cf.~\fref{figglob3}):
	\begin{figure}[ht]
		\centering
		\includegraphics[scale=0.5]{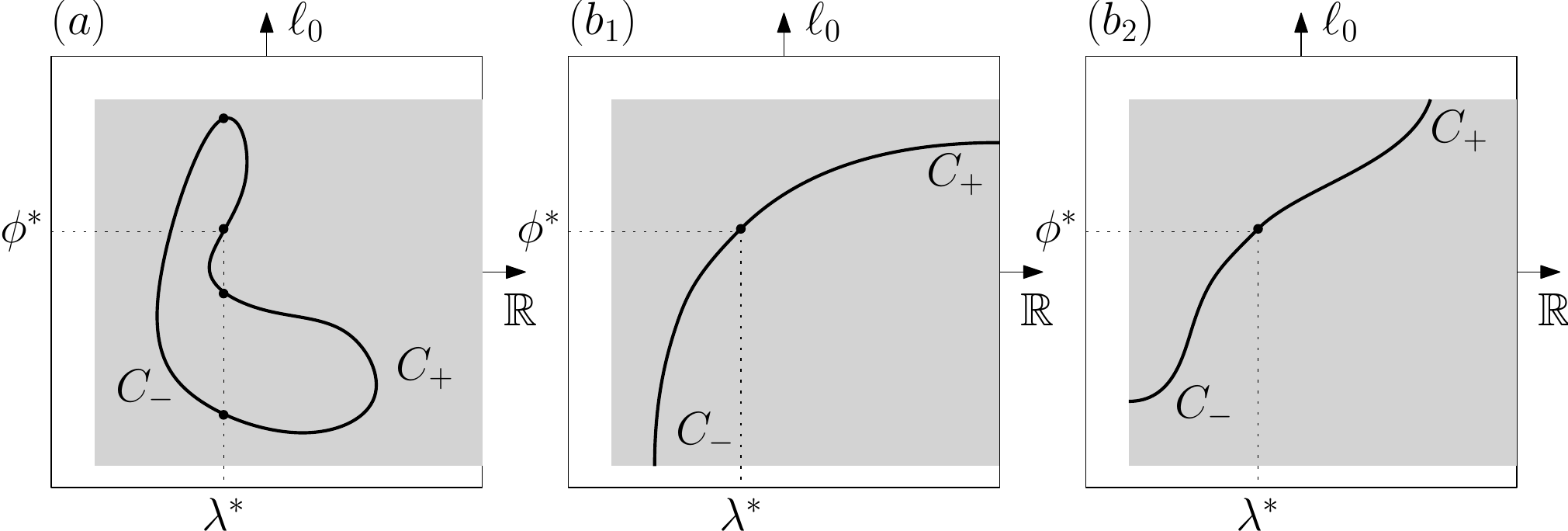}
		\caption{Alternatives from \tref{thmglobdeq}, where the grey shaded area symbolizes $\ell_0(\Omega)\tm\Lambda$: (a) The intersection $C_-\cap C_+$ is larger than just $\set{(\phi^\ast,\lambda^\ast)}$ or, $(b_1)$ $C_+$ is unbounded (here $C_-$ touches the boundary of $\ell_0(\Omega)$) or, $(b_2)$ $C_-$ touches the boundary of $\Lambda$ (while $C_+$ touches the boundary of $\ell_0(\Omega)$)}
		\label{figglob3}
	\end{figure}
	\begin{enumerate}
		\item $C_-\cap C_+\neq\set{(\phi^\ast,\lambda^\ast)}$
		
		\item the branches $C_+$ and $C_-$ are connected and
		\begin{enumerate}
			\item[$(b_1)$] $C_+$ is unbounded or at least one of the following sets is nonempty:
			$$
				\Pi_1(\overline{C_+})\cap\partial\ell_0(\Omega),\,
				\overline{\Pi_2(C_+)}\cap\partial\Lambda
			$$

			\item[$(b_2)$] $C_-$ is unbounded or at least one of the following sets is nonempty:
			$$
				\Pi_1(\overline{C_-})\cap\partial\ell_0(\Omega),\,
				\overline{\Pi_2(C_-)}\cap\partial\Lambda,
			$$
		\end{enumerate}
	\end{enumerate}
	where $\Pi_1,\Pi_2$ are the projection of $(x,\lambda)$ onto the first resp.\ second component. For $\Omega=\R^d$, $\Lambda=\R$ (exactly) one of the next cases occurs (cf.~\fref{figglob1}):
	\begin{figure}[ht]
		\centering
		\includegraphics[scale=0.5]{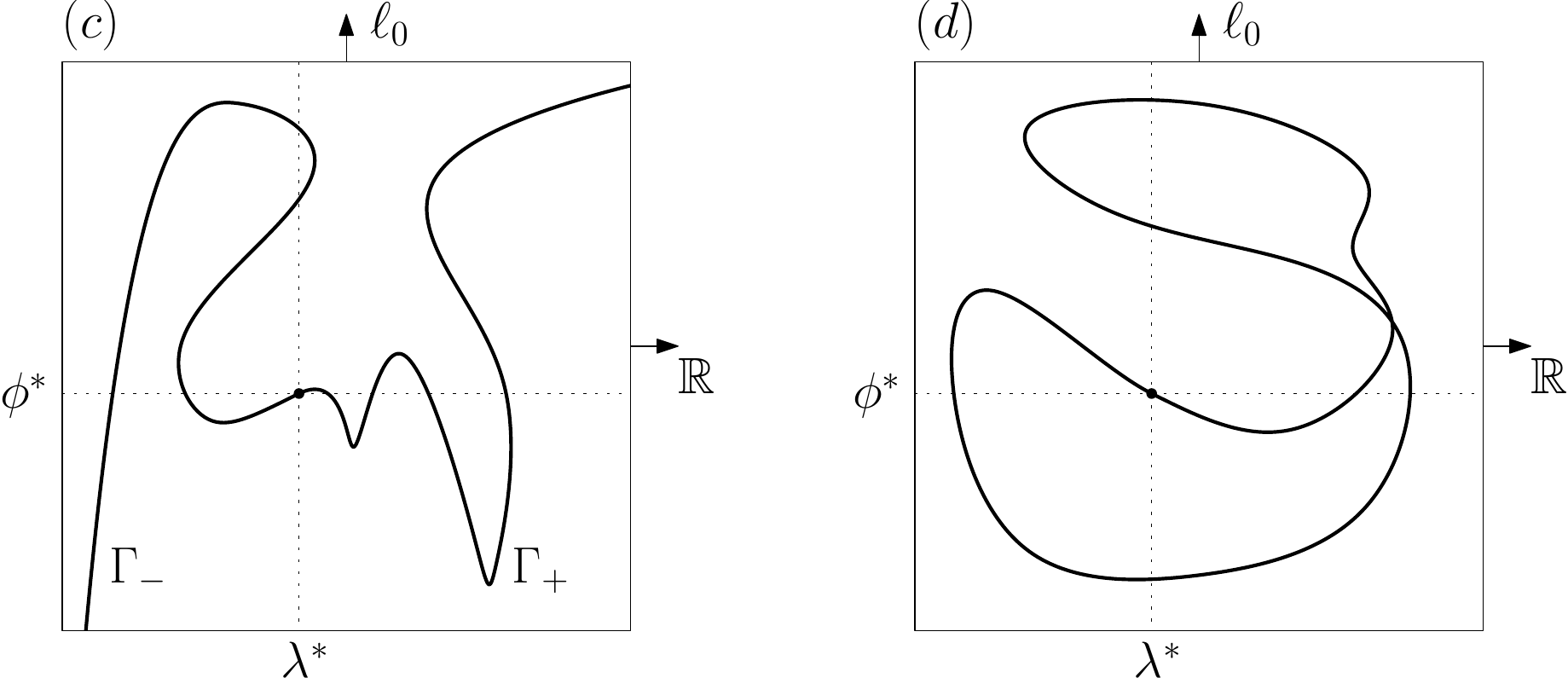}
		\caption{Alternatives from \tref{thmglobdeq}: (c) Two disjoint unbounded sets $\Gamma_-,\Gamma_+$ meet at $(\phi^\ast,\lambda^\ast)$ or, (d) the difference $C\setminus\set{(\phi^\ast,\lambda^\ast)}$ is connected}
		\label{figglob1}
	\end{figure}
	\begin{enumerate}
		\item[(c)] $C=\set{(\phi^\ast,\lambda^\ast)}\cup\Gamma_+\cup\Gamma_-$ with unbounded disjoint sets $\Gamma_-,\Gamma_+$,

		\item[(d)] $C\setminus\set{(\phi^\ast,\lambda^\ast)}$ is connected.
	\end{enumerate}
\end{thm}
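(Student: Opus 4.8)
The plan is to recast the component $C$ of homoclinic solutions as the zero set of the nonlinear operator $\sG$ from \lref{lemgB} and then invoke the abstract global continuation machinery (the theorems of Kielh\"ofer and \'Ev\'equoz announced as \tref{thmkiel}/\tref{thmevequoz}, to be proved in the appendix) which delivers exactly the dichotomy of alternatives. First I would fix the functional-analytic set-up: $X:=\ell_0$, the open set $\eW:=\ell_0(\Omega)\tm\Lambda$, and the map $\sG:\eW\to\ell_0$, which by \pref{propF} is $C^1$ in $\phi$ and continuous, with partial derivative $D_1\sG(\phi,\lambda)\psi=\sS\psi-D_1\sF(\phi,\lambda)\psi$. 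The three hypotheses of the abstract theorem must be checked: (1) $D_1\sG(\phi^\ast,\lambda^\ast)\in GL(\ell_0)$, (2) $D_1\sG(\phi,\lambda)\in F_0(\ell_0)$ along the whole zero set, and (3) properness of $\sG$ on bounded-times-compact sets.

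Verifying these is where the earlier lemmas do the work. Point (1) is precisely \lref{lemadmiss}(a) together with the assumption $1\notin\Sigma(\lambda^\ast)$ in \eqref{thmglobdeq2}. For point (2) I would argue that if $(\phi,\lambda)\in C$ then $\phi$ solves $(\Delta_\lambda)$, and one combines \lref{lempalmer} (Fredholmness of $\eL_A$ in terms of the half-line spectra) applied to the variational equation $(V_\lambda)$ along $\phi^\ast$ --- using $1\notin\Sigma^\pm(\lambda)$ and the index formula $\ind = \rk P_0^+(\lambda)-\rk P_0^-(\lambda)=0$ guaranteed by the rank condition --- to get $D_1\sG(\phi^\ast,\lambda)\in F_0(\ell_0)$, and then \lref{lemadmiss}(c) transports this to $D_1\sG(\phi,\lambda)\in F_0(\ell_0)$ for the actual solution $\phi$. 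Strictly speaking $D_1\sG(\phi^\ast,\lambda)$ need not equal $\eL_A$ on the nose because $\sS$ is applied on the left; one must check that $\sG$ has the same Fredholm index as $\eL_{A(\lambda)}$ with $A_t(\lambda):=D_1f_t(\phi_t^\ast,\lambda)$, which follows since $\sS\in GL(\ell_0)$ and $\sS^{-1}D_1\sG(\phi^\ast,\lambda)=I-\sS^{-1}D_1\sF(\phi^\ast,\lambda)=\eL_{A(\lambda)}$ up to the harmless index shift of an isomorphism. Point (3) is exactly \pref{propropI} under hypothesis (ii).

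With (1)--(3) in hand, the abstract global implicit function theorem (\tref{thmkiel}) yields a connected component $C$ through $(\phi^\ast,\lambda^\ast)$ obeying alternative (a) or (b); here one uses that $C$ lives in $\ell_0(\Omega)\tm\Lambda$ and that the degree-theoretic argument splits the continuation into the two half-branches $C_\pm$ according to whether $\lambda\lessgtr\lambda^\ast$, producing the boundary-or-unbounded conclusion for each of $C_+$ and $C_-$ (the sets $\Pi_1(\overline{C_\pm})\cap\partial\ell_0(\Omega)$ and $\overline{\Pi_2(C_\pm)}\cap\partial\Lambda$ are the two ways a half-branch can ``leave'' the domain). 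When $\Omega=\R^d$ and $\Lambda=\R$ the domain has no finite boundary, so the ``boundary'' options in (b$_1$), (b$_2$) are vacuous; hence each of $C_\pm$ is unbounded, and the refined \'Ev\'equoz-type alternative (\tref{thmevequoz}) distinguishes whether $C\setminus\{(\phi^\ast,\lambda^\ast)\}$ disconnects into two unbounded pieces $\Gamma_\pm$ (case (c)) or remains connected (case (d)).

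I expect the main obstacle to be the bookkeeping in step (2): one must make sure the Fredholm index of $D_1\sG(\phi,\lambda)$ is genuinely $0$ \emph{for every} $(\phi,\lambda)$ in the zero set, not merely at $\lambda^\ast$. The subtlety is that $\phi^\ast$ is \emph{not} assumed to solve $(\Delta_\lambda)$ for $\lambda\neq\lambda^\ast$, so $(V_\lambda)$ is not a variational equation there; nevertheless \lref{lempalmer} only needs the coefficient sequence $t\mapsto D_1f_t(\phi_t^\ast,\lambda)$ to be bounded (which follows from $(H_2)$) and to admit EDs on both half-lines (hypothesis (i)), and then \lref{lemadmiss}(c) --- whose proof rests on the compactness in \pref{propF}(c), itself using that $\phi_t\to 0$ and $\phi^\ast_t\to 0$ so $D_1f_t(\phi_t,\lambda)-D_1f_t(\phi^\ast_t,\lambda)\to 0$ --- bridges from $\phi^\ast$ to $\phi$. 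The other mild point is confirming that the abstract theorems from \cite{kielhoefer:12,evequoz:09}, which are stated for maps on an open subset of a Banach space with a one-dimensional parameter, apply verbatim to $\sG$ on $\ell_0(\Omega)\tm\Lambda$; this is a matter of matching hypotheses and is deferred to the appendix.
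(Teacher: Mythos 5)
Your proposal follows the paper's proof almost verbatim: recast homoclinic solutions as zeros of $\sG$ via \lref{lemgB}, verify conditions \eqref{glob1}--\eqref{glob4} using \lref{lemadmiss}(a) for invertibility at $(\phi^\ast,\lambda^\ast)$, \lref{lempalmer} plus \lref{lemadmiss}(c) for Fredholm index $0$ on the whole parameter range, and \pref{propropI} for properness, then invoke the two abstract appendix theorems. The one slip is that you have swapped the attributions: alternatives (a)/(b) come from \tref{thmevequoz} (which handles a general open $O\subsetneq X$ and bounded $\Lambda$), while the sharper dichotomy (c)/(d) is what \tref{thmkiel} delivers under the extra hypotheses $O=X$, $\Lambda=\R$; your description of what each result does is correct, only the theorem numbers are reversed, which is understandable given you did not see the appendix. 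Your observation about $\sS^{-1}D_1\sG(\phi^\ast,\lambda)=\eL_{A(\lambda)}$ is a sound clarification that the paper itself leaves implicit.
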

\begin{rem}
	(1) Compared to the local result \cite[Thm.~2.17]{poetzsche:08b} preceding \tref{thmglobdeq}, we assume slightly weaker differentiability, but stronger continuity assumptions on the right-hand sides $f_t$. Thus, locally near $(\phi^\ast,\lambda^\ast)$ the component $C$ is in general merely graph of a continuous function over $\Lambda$, and no longer of class $C^1$.

	(2) The admissibility assumption (ii) can be verified using the criteria from App.~\ref{appC} for the unique existence of bounded entire solutions to \eqref{deq}.

	(3) Note that \tref{thmglobdeq} applies to hyperbolic fixed points $x^\ast=g(x^\ast,\alpha^\ast)$ of \eqref{eqn} under time-dependent forcing of the form $\alpha_t=\alpha^\ast+\lambda\mu_t$, where $(\mu_t)_{t\in\Z}$ is decaying to $0$ and $\lambda\in\R$ controls the magnitude of the perturbation. For this, consider the trivial solution $\phi^\ast=0$ of \eqref{deq} with the right-hand side
	$$
		f_t(x,\lambda):=g(x+x^\ast,\alpha^\ast+\lambda\mu_t)-g(x^\ast,\alpha^\ast+\lambda\mu_t)
	$$
	and the parameter value $\lambda^\ast=0$. This idea extends to periodic (and more general) hyperbolic solutions to \eqref{eqn}.
\end{rem}
\begin{proof}
	Because the openness of $\Omega$ extends to $\ell_0(\Omega)$, we can apply the abstract Thms.~\ref{thmkiel} and \ref{thmevequoz} to $\sG:\overline{O}\tm\Lambda\to\ell_0$ from \lref{lemgB} with $O:=\ell_0(\Omega)$. Since $\sS$ is a bounded linear operator, it results from \pref{propF}(a) that $\sG$ is continuous. Moreover, due to \pref{propF}(b) the derivative $D_1\sF:O\tm\Lambda\to L(\ell_0)$ exists as a continuous function and it results that also $D_1\sG$ exists with
	$$
		D_1\sG(\phi^\ast,\lambda^\ast)\psi=\sS\psi-D_1\sF(\phi^\ast,\lambda^\ast)\psi\fall\psi\in\ell_0.
	$$
	\underline{ad \eqref{glob1}}: Thanks to \lref{lemgB} it is clear that $\sG(\phi^\ast,\lambda^\ast)=0$ holds.
	\\
	\underline{ad \eqref{glob2}}: Because of the first inclusion in \eqref{thmglobdeq2} the derivative $D_1\sG(\phi^\ast,\lambda^\ast)$ is invertible due to \lref{lemadmiss}(a).
	\\
	\underline{ad \eqref{glob3}}: Let $\lambda\in\Lambda$. The remaining inclusions of \eqref{thmglobdeq2} guarantee that \eqref{var} has EDs on both $\Z_0^+$ and $\Z_0^-$. The assumptions on the corresponding projectors thus imply $D_1\sG(\phi^\ast,\lambda)\in F_0(\ell_0)$ due to \lref{lempalmer}. Finally, \lref{lemadmiss}(c) ensures that also $D_1\sG(\phi,\lambda)$ is Fredholm of index $0$ for all $\phi\in\ell_0$.
	\\
	\underline{ad \eqref{glob4}}: We derive from \pref{propropI} that $\sG$ is proper on every $B\tm\Lambda_0$ with bounded, closed $B\subset\ell_0(\Omega)$ and compact $\Lambda_0\subseteq\Lambda$.
	
	Now the assertions (a), (b) result from \tref{thmevequoz}, while \tref{thmkiel} applied to \eqref{opiB} ensures the two alternatives (c), (d).
\end{proof}
\section{Applications}
In this section, we collect several types of difference equations with properties in the scope our main \tref{thmglobdeq}.
\begin{figure}[ht]
	\centering
	\includegraphics[scale=0.5]{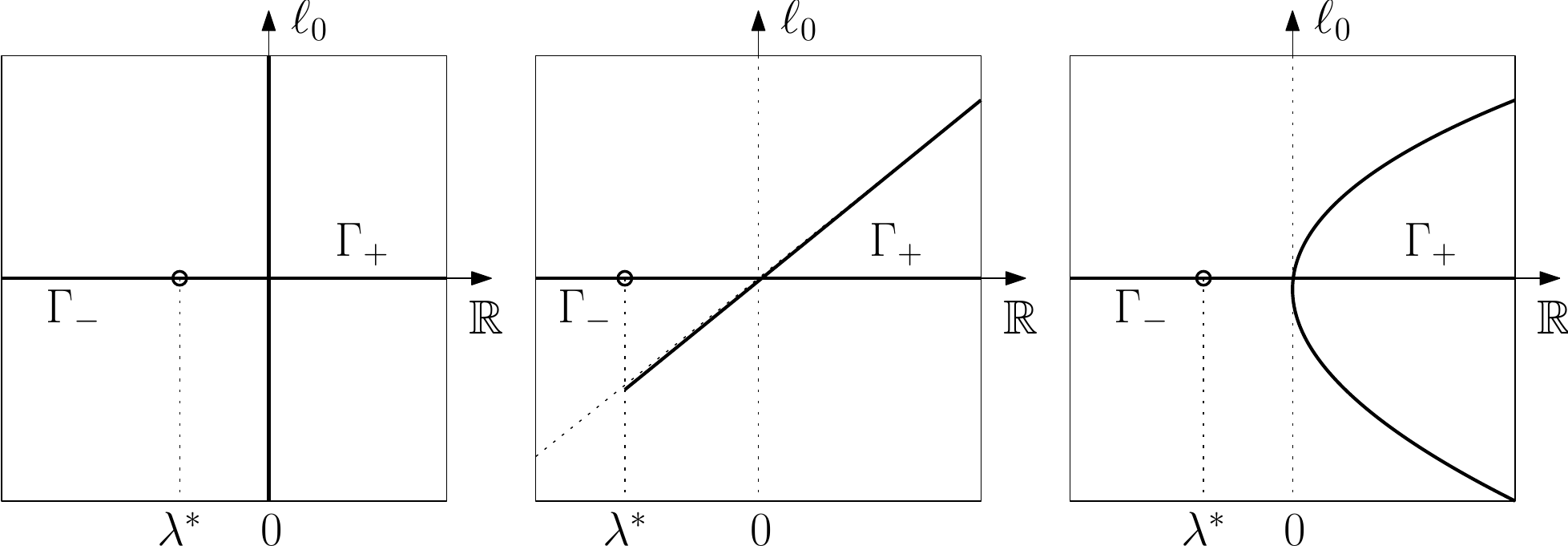}
	\caption{The schematic sets $G^{-1}(0)\subseteq\ell_0(\R^2)\tm\R$ of homoclinic solutions to \eqref{deq} and $\Gamma_-,\Gamma_+$ for the examples from Sect.~\ref{sec50}}
	\label{figcon}
\end{figure}
\subsection{Piecewise constant equations}
\label{sec50}
Let us first illuminate \tref{thmglobdeq} in the light of concrete examples from the bifurcation theory of \cite{poetzsche:08c}. They allow to determine the set of all homoclinic solutions, and particularly the branch $C$ explicitly.

Suppose that $\alpha\in(-1,1)$ is a fixed nonzero real and $\lambda\in\R$ serves as continuation parameter. We consider the linear homogeneous equation
\begin{equation}
	x_{t+1}
	=
	f_t(x_t,\lambda)
	:=
	\begin{pmatrix}
		b_t& 0\\
		\lambda&c_t
	\end{pmatrix}x_t
	\label{exA01}
\end{equation}
with asymptotically constant sequences
\begin{align*}
	b_t:=
	\begin{cases}
		\alpha^{-1},& t<0,\\
		\alpha,& t\geq 0,
	\end{cases}\quad\quad
	c_t:=
	\begin{cases}
		\alpha,& t<0,\\
		\alpha^{-1},& t\geq 0.
	\end{cases}
\end{align*}
On the one hand, since \eqref{exA01} is triangular, the dichotomy spectra reads as
\begin{align}
	\Sigma(\lambda)
	&=
	\begin{cases}
		[\alpha,\tfrac{1}{\alpha}],&\lambda=0,\\
		\set{\alpha,\tfrac{1}{\alpha}},&\lambda\neq 0,
	\end{cases}\quad\quad
	\Sigma^\pm(\lambda)
	=
	\set{\alpha,\tfrac{1}{\alpha}}
	\fall\lambda\in\R.
	\label{prolin}
\end{align}
It is easily seen that \eqref{exA01} fulfills $(H_0)$-$(H_3)$ with $\Omega=\R^2$ and the trivial solution $\phi^\ast=0$. For $\lambda^\ast\neq 0$ the assumption (i) holds. Moreover, the limit sets of \eqref{exA01} are singletons given by the limit equations
\begin{align*}
	x_{t+1}&=
	\begin{pmatrix}
		\alpha^{-1} & 0\\
		\lambda & \alpha
	\end{pmatrix}x_t,\quad\quad
	x_{t+1}=
	\begin{pmatrix}
		\alpha & 0\\
		\lambda & \alpha^{-1}
	\end{pmatrix}x_t.
\end{align*}
Both are hyperbolic with a $1$-dimensional stable subspace and hence $\alpha(\lambda),\omega(\lambda)$ are admissible yielding (ii). On the other hand, the triangular structure of \eqref{exA01} allows to compute the general solution $\vphi_\lambda(\cdot;0,\xi)$ for arbitrary initial values $\xi\in\R^2$. The first component $\vphi_\lambda^1$ is
	\begin{equation}
		\vphi_\lambda^1(t;0,\xi)=\alpha^{\abs{t}}\xi_1
		\fall t\in\Z,\,\xi\in\R^2
		\label{gensolfor}
	\end{equation}
	and consequently $\vphi_\lambda^1(\cdot;0,\xi)\in\ell_0$. For the second component this yields
	$$
		\vphi_\lambda^2(t;0,\xi)
		=
		\alpha^{-\abs{t}}\xi_2
		+
		\lambda
		\begin{cases}
			\sum_{s=0}^{t-1}\frac{1}{\alpha^{t-s-1}}\alpha^s\xi_1, & t\geq 0,\\
			-\sum_{s=t}^{-1}\alpha^{t-s-1}\alpha^{-s}\xi_1, & t<0
		\end{cases}
	$$
	and we arrive at the asymptotic representation
	$$
		\vphi_\lambda^2(t;0,\xi)
		=
		\begin{cases}
			\alpha^{-t}\intoo{\xi_2-\frac{\lambda\alpha}{\alpha^2-1}\xi_1}+o(1),
			& t\to\infty,\\
			\alpha^t\intoo{\xi_2+\frac{\lambda\alpha}{\alpha^2-1}\xi_1}+o(1),
			& t\to-\infty.
		\end{cases}
	$$
	Thus, for $\lambda\neq 0$ the inclusion $\vphi_\lambda(\cdot;0,\xi)\in\ell_0$ holds if and only if $\xi_2=\frac{\lambda\alpha}{\alpha^2-1}\xi_1$ and $\xi_2=-\frac{\lambda\alpha}{\alpha^2-1}\xi_1$, i.e.\ $\xi=(0,0)$. In conclusion, $0$ is the unique homoclinic solution to \eqref{exA01} for $\lambda\neq 0$, while in case $\lambda=0$ the trivial solution $\phi^\ast=0$ is embedded into a $1$-parameter family of homoclinic solutions. This means for every $\lambda^\ast\neq 0$ we are in the situation of \tref{thmglobdeq}(c) shown in \fref{figcon} (left).

\begin{exam}[transcritical bifurcation]\label{exC}
	Let $\delta\in\R\setminus\set{0}$ and consider the nonlinear difference equation
	$$
		x_{t+1}
		=
		f_t(x_t,\lambda)
		:=
		\begin{pmatrix}
			b_t& 0\\
			\lambda&c_t
		\end{pmatrix}x_t
		+
		\delta
		\begin{pmatrix}
			0\\
			(x_t^1)^2
		\end{pmatrix}
	$$
	with general solution $\vphi_\lambda$. Again $(H_0)$-$(H_3)$ hold with $\phi^\ast=0$. Since \eqref{prolin} is satisfied, we confirm assumption (i). As autonomous limit systems one gets
	\begin{align*}
		x_{t+1}=
		\begin{pmatrix}
			\alpha^{-1} & 0\\
			\lambda & \alpha
		\end{pmatrix}x_t
		+
		\delta
		\begin{pmatrix}
			0\\ (x_t^1)^2
		\end{pmatrix},\quad\quad
		x_{t+1}=
		\begin{pmatrix}
			\alpha & 0\\
			\lambda & \alpha^{-1}
		\end{pmatrix}x_t
		+
		\delta
		\begin{pmatrix}
			0\\ (x_t^1)^2
		\end{pmatrix}
	\end{align*}
	respectively. The first component of their general solutions $\vphi_\lambda^-(\cdot;\tau,\xi)$ and $\vphi_\lambda^+(\cdot;\tau,\xi)$ is bounded on $\Z$, if and only if $\xi_1=0$ holds, and plugging this into the second equations shows that the only bounded solution to the limit equations is the trivial one. Hence, $\alpha(\lambda),\omega(\lambda)$ are admissible and \tref{thmglobdeq} applies for $\lambda^\ast\neq 0$. More detailed, while the first component of $\vphi_\lambda(\cdot;0,\xi)$ given by \eqref{gensolfor} is homoclinic, the second component fulfills
	$$
		\vphi_\lambda^2(t;0,\xi)
		=
		\begin{cases}
			\alpha^{-t}\intoo{\xi_2-\frac{\delta\alpha}{\alpha^3-1}\xi_1^2-\frac{\lambda\alpha}{\alpha^2-1}\xi_1}+o(1),
			& t\to\infty,\\
			\alpha^t\intoo{\xi_2+\frac{\delta\alpha^2}{\alpha^3-1}\xi_1^2+\frac{\lambda\alpha}{\alpha^2-1}\xi_1}+o(1),
			& t\to-\infty;
		\end{cases}
	$$
	in summary, we see that $\vphi_\lambda(\cdot;0,\xi)$ is homoclinic if and only if $\xi=0$ or
	\begin{align*}
		\xi_1=-2\frac{\alpha^2+\alpha+1}{\delta(\alpha+1)^2}\lambda,\quad\quad
		\xi_2=-2\frac{\alpha(\alpha^2+\alpha+1)}{\delta(\alpha+1)^4}\lambda^2.
	\end{align*}
	Hence, besides the zero solution we have a unique nontrivial entire solution passing through the initial point $\xi=(\xi_1,\xi_2)$ at time $t=0$ for $\lambda\neq 0$. We are again in the setting of \tref{thmglobdeq}(c) shown in \fref{figcon} (center).
\end{exam}

\begin{exam}[pitchfork bifurcation]
	Let us suppose $\delta\neq 0$ in the nonlinear difference equation
	\begin{equation}
		x_{t+1}
		=
		f_t(x_t,\lambda)
		:=
		\begin{pmatrix}
			b_t& 0\\
			\lambda&c_t
		\end{pmatrix}x_t
		+
		\delta
		\begin{pmatrix}
			0\\
			(x_t^1)^3
		\end{pmatrix}.
		\label{exD01}
	\end{equation}
	As above we observe that assumption (i) holds. Moreover, the limit equations
	\begin{align*}
		x_{t+1}=
		\begin{pmatrix}
			\alpha^{-1} & 0\\
			\lambda & \alpha
		\end{pmatrix}x_t
		+
		\delta
		\begin{pmatrix}
			0\\ (x_t^1)^3
		\end{pmatrix},\quad\quad
		x_{t+1}=
		\begin{pmatrix}
			\alpha & 0\\
			\lambda & \alpha^{-1}
		\end{pmatrix}x_t
		+
		\delta
		\begin{pmatrix}
			0\\ (x_t^1)^3
		\end{pmatrix}
	\end{align*}
	possess no nontrivial bounded entire solutions, which results as in \eref{exC}. Therefore, the admissible limit sets $\alpha(\lambda),\omega(\lambda)$ allow to apply \tref{thmglobdeq}. In order to get a more detailed picture, note that the first component of the general solution to \eqref{exD01} is given by \eqref{gensolfor} and the second component reads as
	$$
		\vphi_\lambda^2(t;0,\xi)
		=
		\begin{cases}
			\alpha^{-t}\intoo{\xi_2-\frac{\delta\alpha}{\alpha^4-1}\xi_1^3-\frac{\lambda\alpha}{\alpha^2-1}\xi_1}+o(1),
			& t\to\infty,\\
			\alpha^t\intoo{\xi_2+\frac{\delta\alpha^3}{\alpha^4-1}\xi_1^3+\frac{\lambda\alpha}{\alpha^2-1}\xi_1}+o(1).
			& t\to-\infty.
		\end{cases}
	$$
	This asymptotic representation shows us that $\vphi_\lambda(\cdot;0,\xi)\in\ell_0$ holds if and only if $\xi=0$ or $\xi_1^2=-2\frac{1}{\delta}\lambda$ and $\xi_2=-2\frac{\alpha}{\alpha^4-1}\frac{(\delta\alpha^2+4\lambda+\delta)}{\delta^2}\lambda^2$. Again, the assertion of \tref{thmglobdeq}(c) holds and \fref{figcon} (right) gives a description of the sets $\Gamma_-,\Gamma_+$.
\end{exam}
\subsection{Semilinear equations}
\label{sec51}
It is well-known that linear-inhomogeneous equations $x_{t+1}=A_tx_t+\lambda b_t$ with $1\not\in\Sigma(A)$ and $b\in\ell_0$ possess unique homoclinic solutions $$\phi_t^\ast=\lambda\sum_{s\in\Z}G(t,s+1)b_s$$ continuing the trivial one for parameters $\lambda\neq 0$, where $G$ is the Green's function defined in \eqref{nogreen}. As a natural generalization of this setting, we consider semilinear difference equations
\begin{equation}
	\tag{$S_\lambda$}
	\boxed{x_{t+1}=A_tx_t+F_t(x_t,\lambda)}
	\label{eqsemi}
\end{equation}
with a nonlinearity $F_t:\R^d\tm\Lambda\to\R^d$, $t\in\Z$, satisfying $(H_0)$-$(H_2)$. In particular, in order to guarantee the admissibility assumption (ii), let us suppose that the following assumptions hold for all $\lambda\in\Lambda$:
\begin{itemize}
	\item[$\eqref{sec51}_1$] \eqref{deqlin} has an ED on $\Z$ with constants $\alpha,K$

	\item[$\eqref{sec51}_2$] $D_1^jF_t(0,\lambda^\ast)\equiv 0$ on $\Z$ and $\lim_{t\to\pm\infty}D_1^jF_t(0,\lambda)=0$ for $j\in\set{0,1}$

	\item[$\eqref{sec51}_3$] There exist functions $F^\pm:\R^d\tm\Lambda\to\R^d$ such that $F^\pm(0,\lambda)=0$,
	$$
		\lim_{t\to\pm\infty}\sup_{x\in B}\abs{F_t(x,\lambda)-F^\pm(x,\lambda)}=0\quad\text{for all bounded }B\subseteq\R^d
	$$
	and $\lip F^\pm(\cdot,\lambda)<\tfrac{K}{1-\alpha}$.
\end{itemize}
Here it is $\Omega=\R^d$ (for simplicity) and $f_t(x,\lambda)=A_tx+F_t(x,\lambda)$. With the reference parameter $\lambda^\ast=0$, due to assumption $\eqref{sec51}_2$ one can choose $\phi^\ast=0$ as homoclinic solution to $(S_0)$. Now keep an arbitrary $\lambda\in\Lambda$ fixed:

\underline{ad (i)}: From $D_1f_t(0,\lambda)=A_t+B_t(\lambda)$ with $B_t(\lambda):=D_1F_t(0,\lambda)$ we first obtain $1\not\in\Sigma(0)$ by assumption $\eqref{sec51}_1$. Moreover, the limit relation in $\eqref{sec51}_2$ for the derivative ensures that $\sL_{A+B(\lambda)}$ is a compact perturbation of $\eL_A$ (cf.~proof of \pref{propF}(c)). Hence, also $\sL_{A+B(\lambda)}$ is a Fredholm operator with index $0$ and \lref{lempalmer} ensures that $1\not\in\Sigma^\pm(\lambda)$ holds, i.e.\ \eqref{thmglobdeq2} is fulfilled.

\underline{ad (ii)}: Thanks to $\eqref{sec51}_3$ the limit sets of \eqref{eqsemi} consist of the respective semilinear equations
\begin{align}
	x_{t+1}=A_tx_t+F^-(x_t,\lambda),\quad\quad
	x_{t+1}=A_tx_t+F^+(x_t,\lambda)
	\label{semideqlim}
\end{align}
having the trivial solution. In addition, \pref{propsemlin} guarantees that they are the unique bounded entire solutions to \eqref{semideqlim} and thus $\alpha(\lambda),\omega(\lambda)$ are admissible.
\subsection{Asymptotically periodic equations}
\label{sec52}
The ED assumptions (i) of \tref{thmglobdeq} simplify and are easier to verify, when we restrict to asymptotically periodic equations, which can have different forward and backward periods:

Beyond $(H_0)$-$(H_3)$ we assume there exist $p_-,p_+\in\N$ so that the following holds for all $\lambda\in\Lambda$:
\begin{itemize}
	\item[$\eqref{sec52}_1$] There exist functions $f_t^\pm=f_{t+p_\pm}^\pm:\Omega\tm\Lambda\to\R^d$ for all $t\in\Z$ such that
	$$
		\lim_{t\to\pm\infty}\sup_{x\in B}\abs{f_t(x,\lambda)-f_t^\pm(x,\lambda)}=0
		\fall\text{bounded }B\subset\Omega
	$$

	\item[$\eqref{sec52}_2$] $1\not\in\Sigma(\lambda^\ast)$, there exist $p^\pm$-periodic sequences $(A_t^\pm(\lambda))_{t\in\Z}$ such that
	\begin{itemize}
		\item $D_1f_t(\phi_t^\ast,\lambda^\ast)$, $A_t^\pm(\lambda)$ are invertible

		\item $\lim_{t\to\pm\infty}\abs{D_1f_t(\phi_t^\ast,\lambda)-A_t^\pm(\lambda)}=0$

		\item the period matrices $\Pi^\pm(\lambda):=\Phi_A^\pm(p_\pm,0)$ satisfy $\sigma(\Pi^\pm(\lambda))\cap\S^1=\emptyset$

		\item the stable subspaces in forward and backward time fulfill
		$$
			\dim\bigoplus_{\substack{\lambda\in\sigma(\Pi^-(\lambda))\\ \abs{\lambda}<1}}\Eig_\lambda\sigma(\Pi^-(\lambda))
			=
			\dim\bigoplus_{\substack{\lambda\in\sigma(\Pi^+(\lambda))\\ \abs{\lambda}<1}}\Eig_\lambda\sigma(\Pi^+(\lambda))
		$$
	\end{itemize}

	\item[$\eqref{sec52}_3$] the trivial one is the only bounded entire solution to the limit equations
		\begin{align*}
			x_{t+1}=f_t^+(x_t,\lambda),\quad\quad
			x_{t+1}=f_t^-(x_t,\lambda)
		\end{align*}
\end{itemize}
In order to verify that \tref{thmglobdeq} applies, we keep $\lambda\in\Lambda$ fixed.

\underline{ad (i)}: It results from \eref{exper} and $\eqref{sec52}_2$ that $1\not\in\Sigma^\pm(A^\pm(\lambda))$. On both halfaxes $\Z_0^+$ and $\Z_0^-$ the equation \eqref{var} is an $\ell_0$-perturbation of the respective limit equations
\begin{align*}
	x_{t+1}=A_t^-(\lambda)x_t,\quad\quad
	x_{t+1}=A_t^+(\lambda)x_t
\end{align*}
and therefore \cite[Cor.~3.26]{poetzsche:11a} implies $\Sigma^\pm(\lambda)=\Sigma^\pm(A^\pm(\lambda))$.

\underline{ad (ii)}: From assumption $\eqref{sec52}_1$ we obtain the finite limit sets
\begin{align*}
	\alpha(\lambda)&=\set{f_{\cdot+s}^-(\cdot,\lambda):\Z\tm\Omega\to\R^d\mid 0\leq s<p_-},\\
	\omega(\lambda)&=\set{f_{\cdot+s}^+(\cdot,\lambda):\Z\tm\Omega\to\R^d\mid 0\leq s<p_+},
\end{align*}
which consists of the $p_\pm$-periodic limit functions, and their time translates. Due to $\eqref{sec52}_3$ these limit equations, in turn, merely have the trivial one, as bounded entire solution. Thus, the limit sets $\alpha(\lambda),\omega(\lambda)$ are admissible.

As a concretization we arrive at:
\begin{exam}[perturbed Beverton-Holt equation]
	Let $p_-,p_+\in\N$ and $(a_t)_{t\in\Z}$ be a positive sequence such that there exit $p_+$- resp.\ $p_-$-periodic sequences $(a_t^+)_{t\in\Z}$, $(a_t^-)_{t\in\Z}$ in $\R$ with $\lim_{t\to\pm\infty}\abs{a_t-a_t^\pm}=0$. The nonlinear scalar difference equation
	\begin{equation}
		x_{t+1}=\frac{a_tx_t}{1+\abs{x_t}}+\lambda b_t
		\label{bhex}
	\end{equation}
	with right-hand side $f_t(x,\lambda):=\frac{a_tx}{1+\abs{x}}+\lambda b_t$ satisfies $(H_0)$-$(H_3)$, provided $(b_t)_{t\in\Z}$ is a real sequence in $\ell_0$ and $\phi_t^\ast\equiv 0$. For $\lambda^\ast=0$ the variational equation of \eqref{bhex} along $\phi^\ast$ becomes $x_{t+1}=a_tx_t$ and \cite[Ex.~2.6(4)]{poetzsche:14} guarantees the dichotomy spectrum $\intcc{\min\set{c_-,c_+},\max\set{c_-,c_+}}$, where
	\begin{align*}
		c_-:=\sqrt[p_-]{a_{p_--1}^-\cdots a_0^-},\quad\quad
		c_+:=\sqrt[p_+]{a_{p_+-1}^+\cdots a_0^+}.
	\end{align*}
	If $1<\min\set{c_-,c_+}$ or $\max\set{c_-,c_+}<1$, then the variational equation $(V_0)$ has an ED on $\Z$, while \eqref{var} possess EDs on halfaxes with $P_t^\pm(\lambda)=1$. In order to apply \tref{thmglobdeq} with $\lambda^\ast=0$ it remains to ensure admissible limit sets of \eqref{bhex}. Thereto, notice that the limit equations of \eqref{bhex} are
	\begin{align*}
		x_{t+1}&=\frac{a_t^+x_t}{1+\abs{x_t}}=:f_t^+(x_t),&
		x_{t+1}&=\frac{a_t^-x_t}{1+\abs{x_t}}=:f_t^-(x_t)
	\end{align*}
	and $\lip f_t^\pm=a_t^\pm$ holds for all $t\in\Z$. Hence, by \eref{exapa} the assumption $c_-,c_+\in[0,1)$ implies that $\alpha(\lambda),\omega(\lambda)$ are admissible for all $\lambda\in\R$.
\end{exam}
\section{Outlook}
Rather than working with difference equations, similar results can be obtained in continuous time for finite-dimensional nonautonomous differential equations. Indeed, both approaches are largely parallel: Heteroclinic solutions are characterized as solutions to a nonlinear equation between the ambient function spaces $\eC_0^1$ and $\eC_0$. This infinite-dimensional equation is solved using the abstract global implicit function Thms.~\ref{thmkiel} and \ref{thmevequoz}, whose assumptions in turn rely on Fredholm and properness criteria. Despite of these similarities, as difference one has to mention that the counterpart to the operator $\sG$ acts between different spaces and that the compactness conditions in \lref{lemcomp} required for properness have to be adjusted.

A further alternative is to deal with Carath{\'e}odory differential equations. Such problems naturally occur as pathwise realization of random differential equations or in control theory. Here, an ambient spatial setting consists of the spaces $\eW_0^{1,\infty}$ and $\eL_0^\infty$ of absolutely continuous resp.\ essentially bounded functions vanishing at $\pm\infty$. These sets replace $\eC_0^1$ resp.\ $\eC_0$ in our above studies. Corresponding compactness or properness conditions can be found in \cite[Thm.~11, Lemma~12(ii)]{rabier:04}.

In the end, our methods also apply to spaces $\eW^{1,p}$ and $\eL^p$, $p\in(1,\infty)$, in continuous time, or $\ell^p$ in discrete time. This requires ambient growth conditions on the right-hand side of \eqref{deq} for the sake of well-defined substitution operators. Yet, such conditions might lack a physical motivation.
\appendix
\renewcommand{\theequation}{\Alph{section}.\arabic{equation}}
\renewcommand{\thesection}{\Alph{section}}
%
%
%
%
\section{Global continuation}
Let $X,Y$ denote Banach spaces. Global implicit function theorems describe the branch of zeros for a continuous mapping $G:\overline{O}\tm\Lambda\to Y$ containing a pair $(x^\ast,\lambda^\ast)\in X\tm\R$ such that
\begin{equation}
	G(x^\ast,\lambda^\ast)=0,
	\label{glob1}
\end{equation}
where $O\subseteq X$ is an open nonempty subset of $X$ with $x^\ast\in O$ and $\Lambda\neq\emptyset$ denotes an open interval containing $\lambda^\ast$. Throughout, suppose that the derivative $D_1G:O\tm\Lambda\to L(X,Y)$ exists as a continuous function satisfying
\begin{equation}
	D_1G(x^\ast,\lambda^\ast)\in GL(X,Y).
	\label{glob2}
\end{equation}
Therefore, the (local) implicit function theorem (cf.\ \cite[p.~7, Thm.~I.1.1]{kielhoefer:12}) applies and yields a local $C^0$-solution branch $\lambda\mapsto x(\lambda)$ to
$$
	G(x,\lambda)=0.
$$
We define $C\subseteq\overline{O}\tm\Lambda$ as maximal connected component of $G^{-1}(\set{0})\cap(\overline{O}\tm\Lambda)$ containing the local solution branch through $(x^\ast,\lambda^\ast)$. In order to obtain information on the global structure of $C$, two further assumptions are due. First, suppose Fredholmness
\begin{equation}
	D_1G(x,\lambda)\in F_0(X,Y)
	\fall(x,\lambda)\in O\tm\Lambda
	\label{glob3}
\end{equation}
and second, we require
\begin{equation}
	\begin{split}
	G|_{B\tm\Lambda_0}&
	\text{ is proper on closed, bounded}\\
	&B\subseteq O\text{ and compact $\Lambda_0\subseteq\Lambda$}.
	\end{split}
	\label{glob4}
\end{equation}
For globally defined $G$ one establishes
\begin{thm}[global implicit function theorem]\label{thmkiel}
	If \eqref{glob1}--\eqref{glob4} hold with $O=X$, $\Lambda=\R$, then exactly one of the following alternatives applies:
	\begin{enumerate}
		\item $C=\set{(x^\ast,\lambda^\ast)}\cup\Gamma_+\cup\Gamma_-$ with unbounded disjoint sets $\Gamma_-,\Gamma_+$

		\item $C\setminus\set{(x^\ast,\lambda^\ast)}$ is connected
	\end{enumerate}
\end{thm}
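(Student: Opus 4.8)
The plan is to obtain \tref{thmkiel} from the topological degree for proper $C^1$ Fredholm maps of index zero constructed in \cite{fitz-pejs-rabier:92,pejs-rabier:98}, in the packaged form of the abstract global alternative of \cite{kielhoefer:12,evequoz:09}: hypotheses \eqref{glob1}--\eqref{glob4} are exactly the input of that machinery, so the work is to set up the local branch, record the structural decomposition of the component $C$, and read off the dichotomy. First I would use \eqref{glob2} and the implicit function theorem to produce $\delta>0$ and a continuous $\gamma\colon(\lambda^\ast-\delta,\lambda^\ast+\delta)\to X$ with $\gamma(\lambda^\ast)=x^\ast$ such that $G^{-1}(\set0)$ coincides near $(x^\ast,\lambda^\ast)$ with the graph of $\gamma$; in particular $(x^\ast,\lambda^\ast)$ has a neighborhood in $C$ which is an arc, and deleting $(x^\ast,\lambda^\ast)$ splits this local arc into the piece with $\lambda>\lambda^\ast$ and the piece with $\lambda<\lambda^\ast$. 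Define $\Gamma_+$ resp.\ $\Gamma_-$ as the connected component of $C\setminus\set{(x^\ast,\lambda^\ast)}$ containing the former resp.\ the latter.

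Next I would establish the decomposition $C=\set{(x^\ast,\lambda^\ast)}\cup\Gamma_+\cup\Gamma_-$, together with $\overline{\Gamma_\pm}=\Gamma_\pm\cup\set{(x^\ast,\lambda^\ast)}$ and $\overline{\Gamma_+}\cap\overline{\Gamma_-}=\set{(x^\ast,\lambda^\ast)}$ (closures in $X\tm\R$), using that $C$, being a connected component of the closed set $G^{-1}(\set0)$, is closed, and that the local arc already exhausts a neighborhood of $(x^\ast,\lambda^\ast)$ in $C$. With this in hand, the theorem reduces to the single claim: \emph{if $\Gamma_+$ is bounded then $\Gamma_+=\Gamma_-$} (and symmetrically for $\Gamma_-$). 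Granting it, either $\Gamma_+=\Gamma_-$, whence $C\setminus\set{(x^\ast,\lambda^\ast)}=\Gamma_+$ is connected --- alternative (b) --- or $\Gamma_+\neq\Gamma_-$, so they are disjoint components and, by the claim, both unbounded --- alternative (a); the two cases are mutually exclusive since exactly one of them leaves $C\setminus\set{(x^\ast,\lambda^\ast)}$ connected.

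The claim I would prove by contradiction. Assume $\Gamma_+$ bounded and $\Gamma_+\neq\Gamma_-$, so $K:=\overline{\Gamma_+}$ is a closed, bounded, connected set with $K\cap\Gamma_-=\emptyset$ and $K\cap\overline{\Gamma_-}=\set{(x^\ast,\lambda^\ast)}$. Since $K$ lies in $G^{-1}(\set0)\cap(B\tm\Lambda_0)$ for suitable closed, bounded $B\subseteq X$ and compact $\Lambda_0\subseteq\R$, properness \eqref{glob4} makes $K$ compact, and likewise each $\Sigma_R:=G^{-1}(\set0)\cap(\overline{B_R(0)}\tm[-R,R])$ is compact, with $K\subseteq\Sigma_R$ for $R$ large. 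After excising a small ball about $(x^\ast,\lambda^\ast)$ on whose closure $G^{-1}(\set0)$ is just the graph of $\gamma$, a Whyburn-type separation lemma applied inside the compact set $\Sigma_R$ produces a bounded open $\mathcal N\subseteq X\tm\R$ isolating $K$: $K\subseteq\mathcal N$, $G(x,\lambda)\neq0$ on $\partial\mathcal N$, $\mathcal N\cap G^{-1}(\set0)$ disjoint from $\Gamma_-$; shrinking over the parameter axis one may in addition take $\mathcal N$ over some $[a,b]$ with $a<\lambda^\ast<b$, $\Pi_2(K)\subseteq(a,b)$, and $\mathcal N_a,\mathcal N_b$ free of zeros of $G(\cdot,a),G(\cdot,b)$. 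Homotopy invariance of the Fredholm degree of \cite{fitz-pejs-rabier:92,pejs-rabier:98} along $\lambda\in[a,b]$ --- admissible since $G$ is $C^1$ Fredholm of index $0$ by \eqref{glob3} and proper on $\overline{\mathcal N}$ by \eqref{glob4} --- forces $\deg(G(\cdot,\lambda),\mathcal N_\lambda)$ to be constant in $\lambda$, hence $0$. On the other hand, invertibility of $D_1G(x^\ast,\lambda^\ast)$ implies that for $\lambda$ just above $\lambda^\ast$ the only zero of $G(\cdot,\lambda)$ near $x^\ast$ is $\gamma(\lambda)$, where $G(\cdot,\lambda)$ is a local diffeomorphism, so by excision $\deg(G(\cdot,\lambda),\mathcal N_\lambda)=\pm1$ --- a contradiction. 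Hence $\Gamma_+=\Gamma_-$, and the argument with $\Gamma_-$ bounded is identical.

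The step I expect to be the main obstacle is the point-set bookkeeping: establishing $C=\set{(x^\ast,\lambda^\ast)}\cup\Gamma_+\cup\Gamma_-$ (delicate because $C$ need not be locally connected, so the components $\Gamma_\pm$ of $C\setminus\set{(x^\ast,\lambda^\ast)}$ need not be open), and then converting the set-theoretic statement ``no connected subset of the solution set links $K$ across $(x^\ast,\lambda^\ast)$ to $\Gamma_-$'' into a genuine isolating open set $\mathcal N$ with $G|_{\partial\mathcal N}\neq0$. Both rely on using \eqref{glob4} to replace the non-compact closed bounded sets of the infinite-dimensional $X$ by the compact pieces $\Sigma_R$ before invoking the separation lemma, and on excising a neighborhood of $(x^\ast,\lambda^\ast)$ so that ``$K$ beyond the branch point'' is separated from ``$\Gamma_-$ beyond the branch point''. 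A secondary technicality is the orientation/base-point datum needed for a $\Z$-valued Fredholm degree; as only non-vanishing of the degree is used, one may equally work with its $\Z_2$-reduction. In practice I expect the paper to invoke \cite{kielhoefer:12,evequoz:09} for this abstract alternative once \eqref{glob1}--\eqref{glob4} are checked, rather than redo the degree theory.
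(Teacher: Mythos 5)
Your proposal takes the same route as the paper and you correctly anticipate the paper's actual move: its proof is a one-line citation to \cite[pp.~231--232, Thm.~II.6.1]{kielhoefer:12} together with the mod-$2$ reduction of the Fitzpatrick--Pejsachowicz--Rabier degree \cite{fitz-pejs-rabier:92,pejs-rabier:98}. What you have written is a faithful reconstruction of that cited argument (local branch via the IFT, the decomposition of $C$ after deleting the branch point, Whyburn separation inside the proper images, homotopy invariance of the degree forcing $\Gamma_+=\Gamma_-$ when $\Gamma_+$ is bounded), so it goes well beyond the level of detail the paper itself supplies.
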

\begin{proof}
	The proof follows \cite[pp.~231--232, Thm.~II.6.1]{kielhoefer:12}, using the mod~$2$ reduction of the degree for proper $C^1$-Fredholm mappings of index zero, constructed by Fitzpatrick, Pejsachowicz and Rabier \cite{fitz-pejs-rabier:92, pejs-rabier:98}.
\end{proof}
\begin{SCfigure}[2]
	\includegraphics[scale=0.5]{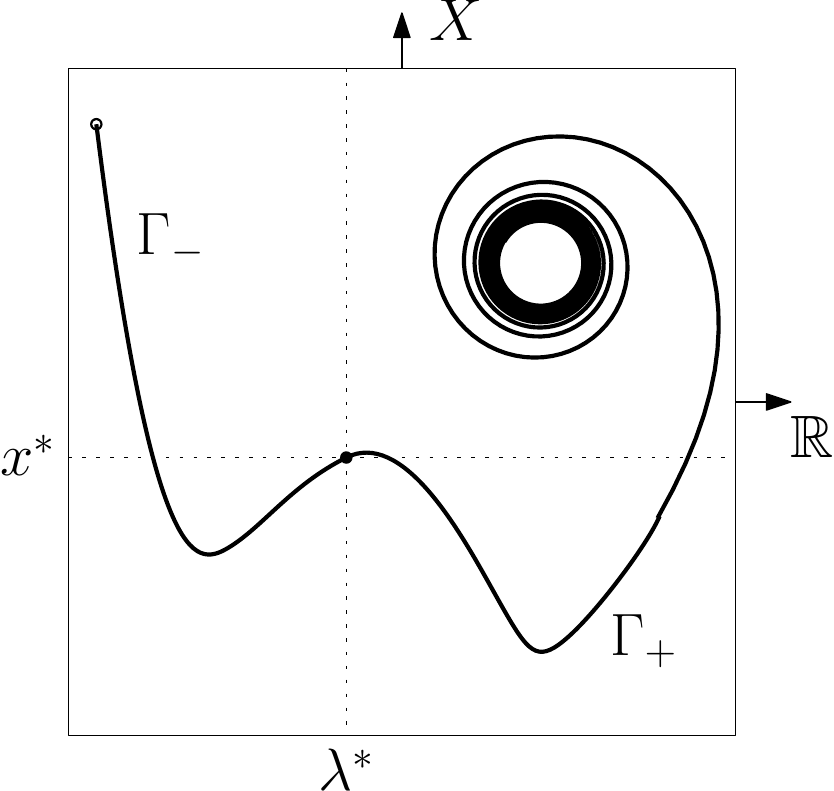}
	\caption{Situations ruled out by \tref{thmkiel}: The set $\Gamma_-$ is a curve having a finite limit as $\lambda\to-\infty$, while the other branch $\Gamma_+$ is bounded}
	\label{figglob2}
\end{SCfigure}
Note that \tref{thmkiel} rules out a situation as depicted in \fref{figglob2}. A variant of \tref{thmkiel} for ``local'' parameter spaces allows solution branches to end at the boundary of $O$ or $\Lambda$ and reads as
\begin{thm}[Ev{\'e}quoz's implicit function theorem]\label{thmevequoz}
	If \eqref{glob1}--\eqref{glob4} hold and
\begin{align*}
C_-:=\set{(x,\lambda)\in C\mid \lambda\leq\lambda^\ast},\;\; C_+:=\set{(x,\lambda)\in C\mid \lambda^\ast\leq\lambda},
\end{align*}
then at least one of the subsequent alternatives applies:
	\begin{enumerate}
		\item $C_-\cap C_+\neq\set{(x^\ast,\lambda^\ast)}$
		
		\item the branches $C_+$ and $C_-$ are connected and
		\begin{enumerate}
			\item[$(b_1)$] $C_+$ is unbounded or at least one of the following sets is nonempty:
			$$
				\Pi_1(\overline{C_+})\cap\partial O,\,
				\overline{\Pi_2(C_+)}\cap\partial\Lambda
			$$

			\item[$(b_2)$] $C_-$ is unbounded or at least one of the following sets is nonempty:
			$$
				\Pi_1(\overline{C_-})\cap\partial O,\,
				\overline{\Pi_2(C_-)}\cap\partial\Lambda,
			$$
		\end{enumerate}
		where $\Pi_1:X\tm\Lambda\to X$, $\Pi_2:X\tm\Lambda\to\Lambda$ stand for the projection of $(x,\lambda)$ onto the first resp.\ second component.
	\end{enumerate}
\end{thm}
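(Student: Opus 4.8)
The plan is to run the argument in parallel to the proof of \tref{thmkiel}, using the $\Z_2$-valued degree for proper $C^1$-Fredholm mappings of index~$0$ from \cite{fitz-pejs-rabier:92,pejs-rabier:98}, which by \eqref{glob3} and \eqref{glob4} is available for each frozen map $G(\cdot,\lambda):\overline O\to Y$. Two facts will be used repeatedly. First, \eqref{glob2} and the local implicit function theorem give $r,\eps>0$ so that the zeros of $G(\cdot,\lambda)$ in $\overline{B_r(x^\ast)}$ reduce to the branch point $x(\lambda)$ for $\abs{\lambda-\lambda^\ast}<\eps$, with $D_1G(x(\lambda),\lambda)\in GL(X,Y)$ there, hence $\deg_{\Z_2}(G(\cdot,\lambda),B_r(x^\ast),0)=1$; in particular $(x^\ast,\lambda^\ast)$ is the \emph{only} zero of $G(\cdot,\lambda^\ast)$ in $\overline{B_r(x^\ast)}$. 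Second, homotopy invariance of the degree along the (merely continuous in $\lambda$, but jointly $C^1$-Fredholm) family $G$ over a compact parameter interval, combined with \eqref{glob4}, will show that $\deg_{\Z_2}(G(\cdot,\lambda),W_\lambda,0)$ is independent of $\lambda$ for any bounded open $W\subseteq X\tm[\lambda_1,\lambda_2]$ with $\overline W\subseteq O\tm\Lambda$ and $G\neq 0$ on $\partial W$. The logical skeleton is: assuming (a) fails, prove (b).

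\emph{Connectedness of $C_\pm$.} Since $C_-\cap C_+=\set{(x,\lambda)\in C\mid\lambda=\lambda^\ast}$, the failure of (a) means $(x^\ast,\lambda^\ast)$ is the unique point of $C$ at level $\lambda^\ast$. If $C_+=K_1\sqcup K_2$ with $K_i$ relatively clopen (hence closed in $C$, as $C_+$ is closed in $C$) and $(x^\ast,\lambda^\ast)\in K_1$, then $C_-\cap K_2\subseteq C_-\cap C_+=\set{(x^\ast,\lambda^\ast)}\subseteq K_1$ forces $C_-\cap K_2=\emptyset$, so $C=(C_-\cup K_1)\sqcup K_2$ is a nontrivial separation — contradiction; likewise for $C_-$. \emph{Compactness of $C_+$.} By symmetry treat $C_+$ and suppose $(b_1)$ fails, i.e.\ $C_+$ is bounded, $\Pi_1(\overline{C_+})\cap\partial O=\emptyset$, $\overline{\Pi_2(C_+)}\cap\partial\Lambda=\emptyset$. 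Passing to the closure of $C_+$ in $X\tm\R$ (which then lies in $S:=G^{-1}(0)\cap(\overline O\tm\Lambda)$, is connected and contains $(x^\ast,\lambda^\ast)$, hence lies in $C\cap\set{\lambda\geq\lambda^\ast}=C_+$) shows $C_+$ closed in $X\tm\R$, so $\Pi_1(C_+)\subseteq O$; since $\overline{\Pi_2(C_+)}$ is a compact subset of $\Lambda$, one deduces $\Pi_1(C_+)$ is itself closed and bounded in $O$, and \eqref{glob4} with $B:=\Pi_1(C_+)$, $\Lambda_0:=\overline{\Pi_2(C_+)}$ exhibits $C_+$ as a closed subset of the compact set $G^{-1}(0)\cap(B\tm\Lambda_0)$. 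Thus $C_+$ is compact, and in fact a compact connected component of $S^+:=S\cap\set{\lambda\geq\lambda^\ast}$, a space which is locally compact near $C_+$.

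\emph{Separation and degree count.} Put $J:=\overline{\set{(x,\lambda^\ast)\in S\mid x\neq x^\ast}}$; by the first preliminary fact $J$ is closed with $(x^\ast,\lambda^\ast)\notin J$, and since $J\subseteq\set{\lambda=\lambda^\ast}$ and $C_+\cap\set{\lambda=\lambda^\ast}=\set{(x^\ast,\lambda^\ast)}$ one has $C_+\cap J=\emptyset$. Choose a bounded open $\mathcal O$ (a finite union of boxes) with $C_+\subseteq\mathcal O$, $\overline{\mathcal O}\subseteq O\tm\Lambda$, $\overline{\mathcal O}\cap S^+$ compact, and $\overline{\mathcal O}\subseteq\set{\lambda<\hat\lambda}$ for some $\hat\lambda\in(\max\Pi_2(C_+),\sup\Lambda)$ — possible since $C_+$ is compact. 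A standard separation lemma (cf.\ \cite{kielhoefer:12}), applied in the compact metric space $\overline{\mathcal O}\cap S^+$ to the component $C_+$ and the closed set $(J\cup\partial\mathcal O)\cap\overline{\mathcal O}\cap S^+$ (disjoint from $C_+$), yields a relatively clopen, hence compact, set $\Gamma$ with $C_+\subseteq\Gamma\subseteq\mathcal O$, $\Gamma\cap J=\emptyset$ and $\Gamma\cap\partial\mathcal O=\emptyset$; thus $\Gamma$ is clopen in $S^+$, $\Gamma\cap\set{\lambda=\lambda^\ast}=\set{(x^\ast,\lambda^\ast)}$ and $\Gamma\subseteq\set{\lambda<\hat\lambda}$. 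Fattening $\Gamma$ to an open $W$ with $\overline W\cap S^+=\Gamma$, $\partial W\cap S^+=\emptyset$, $\overline W\subseteq\overline{\mathcal O}$, the degree $\deg_{\Z_2}(G(\cdot,\lambda),W_\lambda,0)$ is defined and constant for $\lambda\in[\lambda^\ast,\hat\lambda]$ by the second preliminary fact and \eqref{glob4}; it is $0$ at $\lambda=\hat\lambda$ (no zeros in $\overline{W_{\hat\lambda}}$) and, by excision and the first preliminary fact, equals $\deg_{\Z_2}(G(\cdot,\lambda^\ast),B_r(x^\ast),0)=1$ at $\lambda=\lambda^\ast$ — a contradiction in $\Z_2$. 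Hence $(b_1)$ holds; the symmetric argument over $[\check\lambda,\lambda^\ast]$ gives $(b_2)$, and with the connectedness above this yields (b).

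I expect the main obstacle to be the separation step: a naive application of the Whyburn-type lemma produces a clopen set that may contain, besides $C_+$, further zeros of $G(\cdot,\lambda^\ast)$, and these would corrupt the degree count at $\lambda^\ast$. The fix — available precisely because the failure of (a) makes $(x^\ast,\lambda^\ast)$ the unique zero of $G(\cdot,\lambda^\ast)$ on $C$, while the local implicit function theorem keeps every other level-$\lambda^\ast$ zero bounded away from $(x^\ast,\lambda^\ast)$ so that $J$ is closed and avoids it — is the genuinely new ingredient compared with \tref{thmkiel}. A secondary technical point is the homotopy invariance of the Fitzpatrick–Pejsachowicz–Rabier degree for a family only continuous (not $C^1$) in $\lambda$ and with a $\lambda$-dependent domain $W_\lambda$; this is handled exactly as in \cite{evequoz:09,kielhoefer:12}.
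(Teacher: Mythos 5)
Your proposal is correct in substance, but it follows a genuinely different route from the paper, whose entire proof is a two-line reduction: it quotes \cite[Thm.~2.2]{evequoz:09}, which establishes the implication $(a')\Rightarrow(b)$ with $(a')$ being $C_+\cap C_-=\set{(x^\ast,\lambda^\ast)}$, and then observes that this implication is propositionally equivalent to the stated alternative $\neg(a')\vee(b)$, i.e.\ $(a)\vee(b)$. You share exactly this logical skeleton (assume $(a)$ fails, derive $(b)$), but instead of citing Ev{\'e}quoz you reconstruct the content of his theorem: the Whyburn separation of the compact component $C_+$ from the remaining zeros at level $\lambda^\ast$ and from $\partial\mathcal O$, followed by the Rabinowitz-type contradiction $0=1$ for the mod~$2$ Fitzpatrick--Pejsachowicz--Rabier degree along $[\lambda^\ast,\hat\lambda]$. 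Your individual steps check out: the connectedness of $C_\pm$ via the clopen decomposition, the compactness of $C_+$ from the negation of $(b_1)$ together with \eqref{glob4} (projecting along the compact factor $\overline{\Pi_2(C_+)}$ to get a closed bounded $B$), the closedness of $J$ away from $(x^\ast,\lambda^\ast)$ via local uniqueness, and the excision/homotopy count. What each approach buys: the paper's proof is short and rigorous by citation but opaque about where \eqref{glob1}--\eqref{glob4} actually enter; yours makes the mechanism transparent at the cost of carrying the technical burden of the degree theory --- in particular the generalized homotopy invariance of the mod~$2$ degree over a $\lambda$-dependent domain $W_\lambda$ for a family that is only continuous in $\lambda$, and the admissibility of merely continuous (rather than $C^1$) dependence on the parameter. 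You correctly flag these and defer them to \cite{evequoz:09,kielhoefer:12,pejs-rabier:98}; with those inputs granted, I see no gap.
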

\begin{proof}
	In \cite[Thm.~2.2]{evequoz:09} it is shown that
	\begin{itemize}
		\item[(a')] $C_+\cap C_-=\set{(x^\ast,\lambda^\ast)}$
	\end{itemize}
	yields (b). Since this implication $(a')\Rightarrow(b)$ is equivalent to $\neg(a')\vee(b)$ we obtain the assertion.
\end{proof}
\section{Topological dynamics}
\label{secB}
This appendix collects some required preliminaries from topological dynamics (cf.~\cite{sell:71,berger:siegmund:03}) and particular properties of the Bebutov flow.

Let $\Omega\subseteq\R^d$ be open. Given a continuous function $f\colon\Z\tm\Omega\to\R^d$ we define its \emph{hull} by
\begin{equation*}
	\fH(f):=\overline{\set{f(\cdot+s,\cdot):\Z\tm\Omega\to\R^d|\,s\in\Z}}\subseteq C(\Z\tm\Omega,\R^d).
\end{equation*}
This allows to introduce the \emph{Bebutov flow}
\begin{align}\label{H(f-g)}
	\sS^s\colon\fH(f)\to\fH(f),\quad\quad
	\sS^sg:=g(\cdot+s,\cdot)\fall s\in\Z
\end{align}
induced by $f$. The closure in the above definition of $\fH(f)$ is chosen w.r.t.\ an ambient topology such that $(s,g)\mapsto\sS^sg$ becomes continuous (cf.\ \cite{berger:siegmund:03}). Thus, \eqref{H(f-g)} defines a dynamical system on $\fH(f)$. Given a compact subset $K\subset\R^d$, it is convenient to write $K_\Omega:=K\cap\Omega$ and to denote $f$ as
\begin{itemize}
	\item \emph{bounded} on $K$, if $f(\Z\tm K_\Omega)\subseteq\R^d$ is bounded

	\item \emph{uniformly continuous} on $K$, if for every $\eps>0$ there is a $\delta>0$ with
	\begin{equation}\label{uniformly}
		\abs{x-y}<\delta
		\quad\Rightarrow\quad
		\sup_{t\in\Z}\abs{f(t,x)-f(t,y)}<\eps
		\fall x,y\in K_\Omega.
	\end{equation}
\end{itemize}

For instance, if $(t,x)\mapsto g(t,x)$ is bounded on bounded sets (uniformly in $t\in\Z$), then
$$
	\abs{g}_l:=\sup_{(t,x)\in\Z\tm(\bar B_l(0)\cap\Omega)}\abs{g(t,x)}
$$
are semi-norms yielding the \emph{compact-open topology}, i.e.\ the topology of uniform convergence on compact sets induced by the metric
\begin{equation}
	\bar d(g,\bar g):=\sum_{l=1}^\infty\frac{1}{2^l}\abs{g-\bar g}_l.
	\label{noddef}
\end{equation}
This construction of the Bebutov flow equips us with tools from dynamical systems in a natural way. For instance,
$$
	\omega(f)
	:=
	\set{g\in\fH(f)|\,\exists s_n\to\infty:\lim_{n\to\infty}\bar d(f(\cdot+s_n,\cdot),g)=0}
$$
defines the $\omega$-\emph{limit set} of $f$ and the $\alpha$-\emph{limit set} is
$$
	\alpha(f)
	:=
	\set{g\in\fH(f)|\,\exists s_n\to\infty:\lim_{n\to\infty}\bar d(f(\cdot-s_n,\cdot),g)=0}.
$$

\begin{lem}\label{lemproplim}
	If $f$ is bounded and uniformly continuous on any compact subset of $\Omega$, then $\fH(f)\neq\emptyset$ is compact and the following holds
	\begin{enumerate}
		\item $\alpha(g),\omega(g)\neq\emptyset$ are compact for all $g\in\fH(f)$

		\item the elements of $\alpha(f)$ and $\omega(f)$ are bounded and uniformly continuous on any compact subset of $\Omega$.
	\end{enumerate}
\end{lem}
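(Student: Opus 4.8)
The plan is to derive the compactness of $\fH(f)$ from an Arzel\`a--Ascoli argument and then to read off (a) and (b) from standard properties of limit sets of flows on compact metric spaces. One uses throughout that convergence in the metric $\bar d$ of \eqref{noddef} amounts to uniform convergence on each set $\Z\tm(\bar B_l(0)\cap\Omega)$, $l\in\N$, and that the discrete factor $\Z$ confines equicontinuity issues to the spatial variable, where the hypotheses on $f$ provide a modulus of continuity that does not depend on $t$.

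First I would establish that $\fH(f)\neq\emptyset$ is compact. Nonemptiness is clear because $f=f(\cdot+0,\cdot)$ belongs to the set whose closure defines $\fH(f)$. For compactness it is enough to show that the family of time translates $\mathcal T:=\set{f(\cdot+s,\cdot)\mid s\in\Z}$ is relatively compact. On each $\Z\tm(\bar B_l(0)\cap\Omega)$ this family is uniformly bounded, since $f(t+s,x)$ lies in the bounded set $f(\Z\tm(\bar B_l(0)\cap\Omega))$; and it is equicontinuous in $x$, uniformly in $t\in\Z$ and over $\mathcal T$, because the $\delta=\delta(\eps,l)$ furnished by the uniform continuity of $f$ on $\bar B_l(0)$ satisfies, for all $s,t\in\Z$ and $\abs{x-y}<\delta$,
$$
	\abs{f(t+s,x)-f(t+s,y)}\leq\sup_{\tau\in\Z}\abs{f(\tau,x)-f(\tau,y)}<\eps .
$$
An Arzel\`a--Ascoli argument on each $\bar B_l(0)\cap\Omega$ together with a diagonal extraction over $l\in\N$ (these sets exhaust $\Omega$) then yields, from any sequence in $\mathcal T$, a subsequence converging in $\bar d$; hence $\overline{\mathcal T}=\fH(f)$ is compact. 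Since the Bebutov shift \eqref{H(f-g)} maps the closed set $\fH(f)$ into itself, it follows in addition that $\fH(g)\subseteq\fH(f)$ is compact for every $g\in\fH(f)$.

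Part (a) is then the standard statement about limit sets: for $g\in\fH(f)$ one has $h\in\omega(g)$ iff $h\in\overline{\set{\sS^sg\mid s\geq n}}$ for all $n\in\N$, and the latter sets are nonempty, closed, nested subsets of the compact space $\fH(f)$, so $\omega(g)=\bigcap_{n\in\N}\overline{\set{\sS^sg\mid s\geq n}}$ is nonempty and compact; the same reasoning with shifts tending to $-\infty$ handles $\alpha(g)$. For part (b) I would take $g\in\omega(f)$ and $s_n\to\infty$ with $f(\cdot+s_n,\cdot)\to g$ uniformly on every $\Z\tm K_\Omega$, $K\subset\R^d$ compact. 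Boundedness of $g$ on $K$ follows by letting $n\to\infty$ in $\abs{g(t,x)}\leq\abs{f(t+s_n,x)}+\abs{g(t,x)-f(t+s_n,x)}$, using $\abs{f(t+s_n,x)}\leq\sup_{\Z\tm K_\Omega}\abs{f}<\infty$. For uniform continuity, given $\eps>0$ pick $\delta>0$ from the uniform continuity of $f$ on $K$ so that $\abs{x-y}<\delta$ forces $\sup_{\tau\in\Z}\abs{f(\tau,x)-f(\tau,y)}<\tfrac{\eps}{3}$, and for such $x,y\in K_\Omega$ split
$$
	\abs{g(t,x)-g(t,y)}\leq\abs{g(t,x)-f(t+s_n,x)}+\abs{f(t+s_n,x)-f(t+s_n,y)}+\abs{f(t+s_n,y)-g(t,y)};
$$
choosing $n$ large enough that the outer terms fall below $\tfrac{\eps}{3}$ uniformly in $t$ and in $x,y\in K_\Omega$ gives $\sup_{\tau\in\Z}\abs{g(\tau,x)-g(\tau,y)}\leq\eps$. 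The case $g\in\alpha(f)$ is verbatim with $f(\cdot-s_n,\cdot)$.

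The main obstacle is the compactness of $\fH(f)$: one has to make sure the relative compactness obtained from Arzel\`a--Ascoli survives in the fairly strong topology encoded by $\bar d$, where approximation is required uniformly in $x$ on bounded sets but also uniformly over all $t\in\Z$ --- which is exactly the point at which one exploits that the bound and the modulus of continuity granted by the hypotheses are $t$-independent, hence invariant under time translation. Given this, (a) and (b) are routine.
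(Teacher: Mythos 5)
Your plan coincides with the paper's: establish compactness of $\fH(f)$, derive (a) from standard properties of limit sets in compact metric spaces, and verify (b) by a three--$\eps$ argument (the latter is essentially verbatim the paper's proof). The only substantive deviation is that you give an inline Arzel\`a--Ascoli proof that $\fH(f)$ is compact, where the paper simply cites \cite[Thm.~2.7, Rem.~2.8(ii)]{berger:siegmund:03} (and separately records a very similar Arzel\`a--Ascoli/diagonal argument as \lref{lem225}).

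That inline compactness argument has a gap. You view the translates $\sS^s f$ as maps from $\bar B_l(0)\cap\Omega$ into $\ell^\infty(\Z,\R^d)$ and want relative compactness from equicontinuity plus uniform boundedness. But the vector-valued Arzel\`a--Ascoli theorem also requires \emph{pointwise} relative compactness: for each $x$ the set $\set{f(\cdot+s,x)\mid s\in\Z}\subset\ell^\infty(\Z,\R^d)$ must be relatively compact, and boundedness of $f$ on $\Z\tm K_\Omega$ does not deliver this. Take $f(t,x):=\operatorname{sign}(t)\,e$ for a fixed unit vector $e$: it is bounded and (trivially) uniformly continuous, yet $\norm{\sS^n f-\sS^m f}=2$ in $\ell^\infty(\Z,\R^d)$ whenever $n\neq m$, so the translates admit no Cauchy subsequence and the hull is not compact in the $\sup_{t\in\Z}$-uniform topology you attribute to $\bar d$ in your opening paragraph. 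The compactness assertion is safe only in the genuine compact-open topology on $C(\Z\tm\Omega,\R^d)$, whose compacta are sets $F\tm K$ with $F\subset\Z$ finite and $K\subset\Omega$ compact; there pointwise relative compactness is automatic (since $\Z$ is discrete and $f$ is bounded on compacta), but then your diagonal extraction must also run over the finite time windows $F$, not merely over $l\in\N$, and this is exactly the double-diagonal that \lref{lem225} performs. Two smaller remarks: $\bar B_l(0)\cap\Omega$ need not be compact when $\Omega\subsetneq\R^d$ is open, so one should exhaust $\Omega$ by genuine compacta; and nonemptiness of $\fH(f)$ needs no argument ($f$ itself belongs), which you correctly note. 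None of these issues surface in the paper's own proof only because it delegates the compactness of the hull to a reference rather than proving it in place.
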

\begin{proof}
	Due to \cite[Thm.~2.7, Rem.~2.8(ii)]{berger:siegmund:03} the hull $\fH(f)\neq\emptyset$ is compact.
	
	(a): Since the Bebutov flow is continuous (see also \cite[Thm.~2.7 and Rem.~2.8(ii)]{berger:siegmund:03}), the assertion is standard (see e.g.\ \cite[p.~11]{kloeden:rasmussen:10}).

	(b): Let $K\subset\R^d$ be compact and $g\in\omega(f)$. Hence, there exists a sequence $(s_n)_{n\in\N}$ in $\Z$ with $\lim_{n\to\infty}s_n=\infty$ such that
	$$
		\lim_{n\to\infty}\bar d(f_n,g)=0,
		\quad
		\text{ where }f_n(t,x):=f_n(t+s_n,x).
	$$
	Boundedness of $g(\Z\tm K_\Omega)$ readily follows from the corresponding property of the image $f(\Z\tm K_\Omega)$. In order to show that $g$ is uniformly continuous on $K$, we choose $\eps>0$. First, $g\in\omega(f)$ in the compact open topology guarantees that there exists a $N\in\N$ with
	\begin{align*}
		\abs{g(t,x)-f_n(t,x)}<\tfrac{\eps}{3},\quad\quad
		\abs{f_n(t,y)-g(t,y)}<\tfrac{\eps}{3}\fall n\geq N,\,t\in\Z
	\end{align*}
	and $x,y\in K_\Omega$. Second, by \eqref{uniformly} there is a $\delta>0$ such that $\abs{x-y}<\delta$ implies $\abs{f(t,x)-f(t,y)}<\tfrac{\eps}{3}$ for all $t\in\Z$ and $x,y\in K_\Omega$. Combining this with the triangle inequality and $n\geq N$ leads to	
	\begin{align*}
		&\abs{g(t,x)-g(t,y)}
		\leq
		\abs{g(t,x)-f_n(t,x)}+\abs{f_n(t,x)-f_n(t,y)}+\abs{f_n(t,y)-g(t,y)}<\\
		&<\tfrac{\eps}{3}+\tfrac{\eps}{3}+\tfrac{\eps}{3}=\eps\fall t\in\Z,\,x,y\in K_\Omega
	\end{align*}
	such that $\abs{x-y}<\delta$. Passing to the supremum over $t\in\Z$ implies \eqref{uniformly}, i.e.\ $g$ is uniformly continuous on $K$. The proof for $g\in\alpha(f)$ follows analogously, when $s_n$ is replaced by $-s_n$.
\end{proof}
\begin{exam}
	Almost periodic and almost automorphic functions $f$ yield a compact hull $\fH(f)$ (see \cite[Prop.~3.9]{berger:siegmund:03}) and thus compact limit sets.
\end{exam}
\begin{exam}[asymptotically periodic equations]\label{exap}
	A function $f$ as above is called \emph{asymptotically periodic}, if there exist $p_+,p_-\in\N$ and limit functions $f^\pm:\Z\tm\Omega\to\R^d$ satisfying $f^\pm(t,x)=f^\pm(t+p_\pm,x)$ and
	$$
		\lim_{t\to\pm\infty}\sup_{x\in B}\abs{f(t,x)-f^\pm(t,x)}=0\fall B\subseteq\Omega\text{ bounded}.
	$$
	This implies finite limit sets
	\begin{align*}
		\omega(f)&=\set{\sS^tf^+:\Z\tm\Omega\to\R^d|\,0\leq t<p_+},\\
		\alpha(f)&=\set{\sS^tf^-:\Z\tm\Omega\to\R^d|\,0\leq t<p_-}
	\end{align*}
	with $p_+$ resp.\ $p_-$ elements.
\end{exam}

\begin{lem}\label{lem225} 
	If $f$ is bounded and uniformly continuous on any compact subset of $\R^d$, then every sequence $(s_n)_{n\in\N}$ in $\Z$ with $\lim_{n\to\infty}\abs{s_n}=\infty$ has a subsequence $(s_{n_k})_{k\in\N}$ such that $(\sS^{s_k}f)_{k\in\N}$ converges.
\end{lem}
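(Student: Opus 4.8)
\emph{Proof plan.} The plan is to deduce the claim immediately from the compactness of the hull. First I would recall that, since $f$ is bounded and uniformly continuous on every compact subset of $\R^d$, Lemma~\ref{lemproplim} guarantees that the hull $\fH(f)$ is a nonempty compact set. Because $f$ is bounded on bounded sets (uniformly in $t\in\Z$), the compact-open topology on $\fH(f)$ is the one induced by the metric $\bar d$ from \eqref{noddef}, so $\fH(f)$ is in fact a compact \emph{metric} space and hence sequentially compact.

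Next I would observe that every translate $\sS^{s_n}f=f(\cdot+s_n,\cdot)$ belongs to the set $\set{f(\cdot+s,\cdot):s\in\Z}\subseteq\fH(f)$ by the very definition of the hull. Thus $(\sS^{s_n}f)_{n\in\N}$ is a sequence in the sequentially compact space $\fH(f)$, and therefore it admits a subsequence $(\sS^{s_{n_k}}f)_{k\in\N}$ converging in $\fH(f)$ with respect to $\bar d$. This is precisely the assertion.

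Although it is not needed for the statement itself, the hypothesis $\lim_{n\to\infty}\abs{s_n}=\infty$ lets one refine the conclusion: by passing once more to a subsequence one may assume either $s_{n_k}\to+\infty$ or $s_{n_k}\to-\infty$, and then the definitions of the $\omega$- resp.\ $\alpha$-limit set show that $\lim_{k\to\infty}\sS^{s_{n_k}}f$ lies in $\omega(f)$ resp.\ $\alpha(f)$. The argument is essentially a one-line consequence of Lemma~\ref{lemproplim}, so there is no genuine obstacle; the only point deserving a moment's care is the passage from compactness to sequential compactness, which is justified by the metrizability of $\fH(f)$ in the compact-open topology as set up in Appendix~\ref{secB}.
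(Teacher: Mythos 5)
Your proof is correct, but it takes a genuinely different route from the paper. The paper proves \lref{lem225} directly and self-containedly: it restricts the translates $f(\cdot+s_n,\cdot)$ to the sets $\Z\tm(\Omega\cap\bar B_k(0))$, applies the Arzel\'a--Ascoli theorem on each of these (using boundedness and the equicontinuity coming from uniform continuity of $f$), and then extracts a diagonal subsequence converging in the compact-open topology. You instead reduce everything to the compactness of $\fH(f)$ asserted in \lref{lemproplim}, together with metrizability of the compact-open topology via $\bar d$ from \eqref{noddef} and the equivalence of compactness and sequential compactness in metric spaces. This is logically sound within the paper's structure, since \lref{lemproplim} precedes \lref{lem225} and its compactness claim is established there (by citation to Berger--Siegmund), so there is no circularity; your observation that the hypothesis $\abs{s_n}\to\infty$ is not needed for the bare conclusion is also accurate, and matches the paper, where that hypothesis only enters cosmetically. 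The trade-off is that your argument outsources all the analytic work to the cited external compactness theorem, whereas the paper's diagonal construction is self-contained and exhibits the limit function explicitly --- which is presumably why the authors included \lref{lem225} at all, since under your reading it is essentially a corollary of \lref{lemproplim}. In effect, the paper's proof of \lref{lem225} is an in-house proof of the sequential compactness that you are importing; both are valid, yours is shorter, the paper's is more informative.
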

\begin{proof}
	Let us suppose w.l.o.g.\ that $\abs{s_n}\geq 1$ holds for all $n\in\N$, define the sets $C_n:=\Z\tm(\Omega\cap\bar B_n(0))$ and the restrictions
	\begin{align*}
		f_n:C_1\to\R^d,\quad\quad
		f_n(t,x):=f(t+s_n,x)\fall n\in\N.
	\end{align*}
	First, the boundedness of $f$ on $\bar B_n(0)$ shows that $(f_n)_{n\in\N}$ is a bounded sequence. Second, by the uniform continuity of $f$ on $\bar B_1(0)$ we see from \eqref{H(f-g)} that for every $\eps>0$ there exists a $\delta>0$ such that
	\begin{align*}
		\abs{x-y}<\delta
		\Rightarrow
		\abs{f_n(t,x)-f_n(t,y)}
		=
		\abs{f(t+s_n,x)-f(t+s_n,y)}<\eps
	\end{align*}
	holds for all $n\in\N$ and $t\in\Z$, $x,y\in C_1$; thus, the set $\set{f_n}_{n\in\N}$ of functions defined on $\Z\tm C_1$ is equicontinuous. By the Arzel{\'a}-Ascoli theorem (see \cite[p.~85]{yosida:80}) there is a sub\-sequence $(f_{n_m^1})_{m\in\N}$ of $(f_n)_{n\in\N}$ with $\abs{s_{n_m^1}}>1$ having a continuous limit $g_1:\Z\tm C_1\to\R^d$.

	Iterating this construction, for every integer $k\geq 2$ one extracts a subsequence $(n_m^k)_{m\in\N}$ from $(n_m^{k-1})_{m\in\N}$ with $\abs{s_{n_m^k}}>k$ such that the sequence $(f_{n_m^k})_{m\in\N}$ of restrictions
	\begin{align*}
		f_{n_m^k}:\Z\tm C_k&\to\R^d,\quad\quad
		f_{n_m^k}(t,x):=f(t+s_{n_m^k},x)
	\end{align*}
	converges uniformly to a continuous function $g_k:\Z\tm C_k\to\R^d$. By construction, each $g_{k+1}(t,\cdot)$ is an extension of $g_k(t,\cdot)$ to the compact set $C_{k+1}$, since passing to a subsequence has no effect on the values in $C_k\subseteq C_{k+1}$. This allows us to define the continuous function
	\begin{align*}
		g:\Z\tm\Omega \to\R^d,\quad\quad
		g(t,x):=g_k(t,x)\quad\text{if }(t,x)\in C_k
	\end{align*}
	and we claim that $g$ is the limit of the diagonal sequence $(\sS^{s_{n_m^m}}f)_{m\in\N}$. Indeed, for every compact $C\subseteq\R^d$ there exists a $n\in\N$ such that $C_\Omega\subseteq C_n$. Thus, $(\sS^{s_{n_m^k}}f)_{m\in\N}$ converges to $g$ uniformly on $\Z\tm C_\Omega$. Moreover, the remainder of the diagonal sequence $(\sS^{s_{n_m^m}}f)_{m\in\N}$ is a subsequence of $(\sS^{s_{n_m^k}}f)_{m\in\N}$ and converges uniformly to $g_k$ on $\Z\tm C_\Omega$. Since $g$ and $g_k$ have the same values on $\Z\tm C_\Omega$, this concludes our argument.
\end{proof}

A rather similar construction as in case of nonlinear functions $f$ is possible for bounded sequences $A\colon\Z\to L(\R^d)$: Indeed, one defines the \emph{hull}
\begin{equation*}
	\fH(A):=\overline{\set{A(\cdot+s):\Z\to L(\R^d)|\,s\in\Z}},
\end{equation*}
on which the \emph{Bebutov flow} reads as
\begin{align*}
	\sS^s:\fH(A)\to \fH(A),\quad\quad
	\sS^s(B):=B(\cdot+s)\fall s\in\Z.
\end{align*}
The closure in this definition of $\fH(A)$ is again taken in the uniform topology induced by the metric
$$
	\bar d(A,\bar A):=\sup_{t\in\Z}\abs{A(t)-\bar A(t)}
$$
and the limit sets now become
\begin{align*}
	\omega(A)
	&:=
	\set{B\in\fH(A)|\,\exists s_n\to\infty:\lim_{n\to\infty}\bar d(A(\cdot+s_n),B)=0},\\
	\alpha(A)
	&:=
	\set{B\in\fH(A)|\,\exists s_n\to\infty:\lim_{n\to\infty}\bar d(A(\cdot-s_n),B)=0}.
\end{align*}

\begin{lem}
If $A\colon\Z\to L(\R^d)$ is bounded, then $\fH(A)\neq\emptyset$ and the limit sets $\alpha(A)$, $\omega(A)$ are nonempty and compact.
\end{lem}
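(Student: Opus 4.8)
The nonemptiness $\fH(A)\neq\emptyset$ is immediate, since $A=A(\cdot+0)$ already lies in the set of translates whose closure defines the hull. The substantive part is the nonemptiness and compactness of $\alpha(A)$ and $\omega(A)$, and the plan is to deduce it from \lref{lemproplim} by recoding the coefficient sequence as a globally Lipschitz nonlinear right-hand side. Concretely, I would introduce $f\colon\Z\tm\R^d\to\R^d$, $f(t,x):=A(t)x$, so that $\Omega=\R^d$. Writing $R:=\sup_{t\in\Z}\abs{A(t)}<\infty$, the bounds $\abs{f(t,x)}\leq R\abs{x}$ and $\abs{f(t,x)-f(t,y)}\leq R\abs{x-y}$ show that $f$ is bounded and uniformly continuous on every compact subset of $\R^d$. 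Hence \lref{lemproplim} applies and gives that $\fH(f)$ is nonempty and compact and that $\alpha(f),\omega(f)$ are nonempty and compact.

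The middle step is to transport this back through the correspondence $B\mapsto f_B$, where $f_B(t,x):=B(t)x$. I would check that this is a homeomorphism of $\fH(A)$ onto $\fH(f)$ which intertwines the two Bebutov shifts, i.e.\ $\sS^sf_B=f_{\sS^sB}$: it sends the translate $A(\cdot+s)$ to $f(\cdot+s,\cdot)$, so it carries translates to translates; it is injective because $B(t)x\equiv B'(t)x$ forces $B(t)\equiv B'(t)$; and a limit (in the hull topology) of a sequence of such linear maps is again of this form, so the image of $\fH(A)$ is exactly the closure of $\set{f(\cdot+s,\cdot)\mid s\in\Z}$, namely $\fH(f)$. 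Because the map intertwines the shifts, it maps $\alpha(A)$ onto $\alpha(f)$ and $\omega(A)$ onto $\omega(f)$; since a homeomorphism preserves nonemptiness and compactness, the conclusion for $\alpha(A),\omega(A)$ follows from the corresponding facts for $f$.

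The only point that genuinely needs a line of verification is that the two notions of convergence agree, i.e.\ that $\bar d(B_n,B)=\sup_{t\in\Z}\abs{B_n(t)-B(t)}\to0$ is equivalent to $f_{B_n}\to f_B$ in the hull metric of $\fH(f)$. This is clear once one writes the relevant seminorms as $\abs{f_{B_n}-f_B}_l=\sup_{t\in\Z,\ \abs{x}\leq l}\abs{(B_n(t)-B(t))x}=l\,\sup_{t\in\Z}\abs{B_n(t)-B(t)}$ and uses $\sum_l l\,2^{-l}<\infty$. Alternatively one can prove the lemma directly, transcribing the proofs of \lref{lem225} and \lref{lemproplim} to the present setting with the Arzel\`a--Ascoli step replaced by compactness of bounded subsets of the finite-dimensional space $L(\R^d)$. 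Either way, the crux is the existence of convergent subsequences of the translate sequences $(A(\cdot+s_n))_{n\in\N}$ with $\abs{s_n}\to\infty$ — this is exactly what \lref{lem225} provides in the function picture — while the diagonal bookkeeping over increasing radii is the only mildly tedious ingredient and presents no conceptual difficulty.
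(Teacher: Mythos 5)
Your proof is correct and follows the same route as the paper: define $f(t,x):=A(t)x$, observe boundedness and uniform continuity on compacts, and invoke \lref{lemproplim}. The paper states this reduction in a single line without justifying the transport of the conclusion from $(\alpha(f),\omega(f))$ back to $(\alpha(A),\omega(A))$; your careful verification that $B\mapsto f_B$ is a shift-intertwining homeomorphism (including the seminorm computation $\abs{f_B-f_{B'}}_l=l\sup_{t\in\Z}\abs{B(t)-B'(t)}$, which shows the sup metric on $\fH(A)$ and the compact-open metric on $\fH(f)$ are equivalent here) supplies exactly the details the paper leaves implicit.
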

\begin{proof}
	The function $f:\Z\tm\R^d\to\R^d$, $f(t,x):=A(t)x$ is bounded and uniformly continuous on every set $\Z\tm K$ with a compact $K\subseteq\R^d$. Accordingly, \lref{lemproplim} applies and implies the claim.
\end{proof}
\section{Bounded solutions}
\label{appC}
In order to verify that a subset of the hull $\fH(f)$ is admissible and hence being able to apply \tref{thmglobdeq}, it is crucial to have criteria for the existence and uniqueness of bounded entire solutions at hand. For this purpose, let us consider nonautonomous difference equations \eqref{deq0} and begin with a folklore
\begin{lem}\label{susy}
	Let $X$ be a complete metric space. If a mapping $\sF:X\to X$ has a contractive iterate $\sF^p$, $p\in\N$, then $\sF$ possesses a unique fixed point.
\end{lem}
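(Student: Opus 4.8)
The plan is to invoke the classical Banach fixed point theorem for the iterate $\sF^p$ and then propagate the conclusion back to $\sF$ itself. First I would apply the contraction mapping principle to $\sF^p\colon X\to X$: since $X$ is a complete metric space and $\sF^p$ is contractive (Lipschitz constant strictly less than one), there exists a unique point $x^\ast\in X$ with $\sF^p(x^\ast)=x^\ast$.

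Next I would show that this $x^\ast$ is already a fixed point of $\sF$. The key observation is that $\sF$ commutes with its own iterates, so
$$
	\sF^p\bigl(\sF(x^\ast)\bigr)=\sF\bigl(\sF^p(x^\ast)\bigr)=\sF(x^\ast),
$$
which means $\sF(x^\ast)$ is also a fixed point of $\sF^p$. By the uniqueness established in the first step, we conclude $\sF(x^\ast)=x^\ast$, i.e.\ $x^\ast$ is a fixed point of $\sF$.

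For uniqueness of the fixed point of $\sF$, I would simply note that any $y\in X$ with $\sF(y)=y$ satisfies $\sF^p(y)=y$, hence $y=x^\ast$ by uniqueness of the fixed point of $\sF^p$. I do not expect any genuine obstacle here; the only points deserving care are that ``contractive'' is read as having Lipschitz constant strictly below one, so that the Banach theorem is applicable, and that the completeness of $X$ is precisely the hypothesis making the contraction mapping principle available.
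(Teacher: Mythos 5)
Your argument is correct and coincides with the paper's own proof: both apply the contraction mapping principle to $\sF^p$, use the commutation $\sF^p(\sF(x^\ast))=\sF(\sF^p(x^\ast))$ to conclude that $\sF(x^\ast)$ is a fixed point of $\sF^p$ and hence equals $x^\ast$, and derive uniqueness for $\sF$ from uniqueness for $\sF^p$. No issues.
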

\begin{proof}
	Thanks to the contraction mapping principle, $\sF^p$ has a unique fixed point $x^\ast$. In order to see that $x^\ast$ is also a fixed point of $\sF$, we observe that any fixed point $y^\ast$ of $\sF$ satisfies $\sF^p(y^\ast)=y^\ast$ and thus $y^\ast=x^\ast$. Moreover, $\sF(x^\ast)=\sF(\sF^p(x^\ast))=\sF^p(\sF(x^\ast))$ guarantees that $\sF(x^\ast)$ is a fixed point of $\sF^p$ and consequently $x^\ast=\sF(x^\ast)$ by uniqueness.
\end{proof}

\begin{prop}[contractive equations]\label{propcontr}
	If $f_t:\R^d\to\R^d$, $t\in\Z$, are globally Lipschitz and satisfy
	\begin{itemize}
		\item[(i)] $f_t$ is bounded uniformly in $t\in\Z$, i.e.\ $\sup_{t\in\Z}\sup_{x\in B}\abs{f_t(x)}<\infty$ for all bounded $B\subset\R^d$,

		\item[(ii)] there exists a $n\in\N$ with
		\begin{equation}
			\sup_{t\in\Z}\prod_{s=t}^{t+n-1}\lip f_s<1,
			\label{exapa0}
		\end{equation}
	\end{itemize}
	then \eqref{deq0} has a unique bounded entire solution.
\end{prop}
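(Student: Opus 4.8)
The plan is to realize a bounded entire solution as a fixed point of a suitable self-map of $\ell^\infty=\ell^\infty(\R^d)$ and then invoke \lref{susy}. Concretely, I would set
$\sT\colon\ell^\infty\to\ell^\infty$, $(\sT\phi)_t:=f_{t-1}(\phi_{t-1})$ for $t\in\Z$, so that $\phi$ is a fixed point of $\sT$ if and only if $\phi_{t+1}\equiv f_t(\phi_t)$ on $\Z$, i.e.\ the fixed points of $\sT$ are precisely the bounded entire solutions to \eqref{deq0}. The first thing to verify is well-definedness of $\sT$: if $\norm{\phi}\leq R$, then assumption (i) yields $\sup_{t\in\Z}\abs{f_{t-1}(\phi_{t-1})}\leq\sup_{t\in\Z}\sup_{\abs{x}\leq R}\abs{f_t(x)}<\infty$, hence $\sT\phi\in\ell^\infty$. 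This step genuinely uses (i), since (ii) forces only the windowed products of the Lipschitz constants to be small, not the individual constants $\lip f_t$ to be bounded in $t$.

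Next I would observe that $\sT$ itself need not be a contraction, so the main point is to show that the iterate $\sT^n$ is one, with $n$ as in (ii). Unwinding the definition,
$$
	(\sT^n\phi)_t=(f_{t-1}\circ f_{t-2}\circ\cdots\circ f_{t-n})(\phi_{t-n})\fall t\in\Z,
$$
and applying the global Lipschitz property of $f_{t-1},\dots,f_{t-n}$ successively gives
$$
	\abs{(\sT^n\phi)_t-(\sT^n\psi)_t}\leq\bigl(\textstyle\prod_{s=t-n}^{t-1}\lip f_s\bigr)\abs{\phi_{t-n}-\psi_{t-n}}\leq\theta\norm{\phi-\psi}
	\fall t\in\Z,
$$
where $\theta:=\sup_{\tau\in\Z}\prod_{s=\tau}^{\tau+n-1}\lip f_s<1$ by \eqref{exapa0} (choose $\tau=t-n$). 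Passing to the supremum over $t\in\Z$ yields $\norm{\sT^n\phi-\sT^n\psi}\leq\theta\norm{\phi-\psi}$, so $\sT^n$ is a contraction on the complete metric space $\ell^\infty$.

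Finally, \lref{susy} applied to $\sT$ on $X=\ell^\infty$ provides a unique fixed point $\phi^\ast\in\ell^\infty$, which by the identification above is the unique bounded entire solution to \eqref{deq0}. I do not anticipate a serious obstacle: the only mildly subtle points are the well-definedness of $\sT$ on $\ell^\infty$ (covered by (i)) and the bookkeeping of the telescoping Lipschitz estimate combined with the shift-invariant bound (ii) — it is precisely the fact that $\sup_t\lip f_t$ may fail to be $<1$ that necessitates the detour through the contractive-iterate \lref{susy} rather than a direct application of the Banach fixed point theorem.
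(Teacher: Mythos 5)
Your proposal is correct and follows exactly the route the paper takes: realize bounded entire solutions as fixed points of the shift-substitution operator on $\ell^\infty$, establish well-definedness from (i), show the $n$-th iterate is a contraction via the telescoping Lipschitz estimate and assumption (ii), and conclude with \lref{susy}. Aside from notation ($\sT$ vs.\ the paper's $\sF$), there is no substantive difference.
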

\begin{rem}[expansive equations]
	The same conclusion as in \pref{propcontr} holds for expansive difference equations \eqref{deq0}. Here, $f_t:\R^d\to\R^d$, $t\in\Z$, are assumed to be bijective with Lipschitzian inverses satisfying conditions corresponding to (i) and (ii).
\end{rem}
\begin{proof}
	Notice that $\phi=(\phi_t)_{t\in\Z}\in\ell^\infty$ is an entire solution to \eqref{deq0}, if and only if $\phi$ is a fixed point of the mapping $\sF:\ell^\infty\to\ell^\infty$, $\sF(\phi)_t:=f_{t-1}(\phi_{t-1})$ for all $t\in\Z$, which is well-defined due to (i). Using mathematical induction it is not difficult to show that the iterates of $\sF$ allow the representation
	$$
		\sF^n(\phi)_t=f_{t-1}\circ\ldots\circ f_{t-n}(\phi_{t-n})\fall t\in\Z,\,\phi\in\ell^\infty,
	$$
	which guarantees
	\begin{align*}
		\abs{\sF^n(\phi)_t-\sF^n(\bar\phi)_t}
		&\leq
		\intoo{\prod_{s=t-n}^{t-1}\lip f_s}\abs{\phi_{t-n}-\bar\phi_{t-n}}\\
		&\leq
		\sup_{t\in\Z}\intoo{\prod_{s=t}^{t+n-1}\lip f_s}\norm{\phi-\bar\phi}
		\fall t\in\Z,\,n\in\N.
	\end{align*}
	This leads us to the Lipschitz estimate
	$$
		\norm{\sF^n(\phi)-\sF^n(\bar\phi)}
		\leq
		\sup_{t\in\Z}\intoo{\prod_{s=t}^{t+n-1}\lip f_s}\norm{\phi-\bar\phi}
		\fall\phi,\bar\phi\in\ell^\infty.
	$$
	Thus, $\sF^{p}$ is a contraction by \eqref{exapa0} and \lref{susy} with $X=\ell^\infty$ implies a unique fixed point $\phi$, which in turn is a bounded entire solution to \eqref{deq0}.
\end{proof}
\begin{exam}[asymptotically periodic equations]\label{exapa}
	We return to \eref{exap} and its terminology. If the $p_\pm$-periodic limit functions $f_t^\pm:\R^d\to\R^d$, $t\in\Z$, are globally Lipschitz with
	\begin{align*}
		\prod_{t=0}^{p_\pm-1}\lip f_t^\pm&<1,\quad\quad
		f_t^\pm(0)\equiv 0\on\Z,
	\end{align*}
	then the limit sets $\alpha(f),\omega(f)$ are admissible. Indeed, \pref{propcontr} implies unique bounded solutions $\phi^+,\phi^-$ to the respective limit equations
	\begin{align*}
		x_{t+1}=f_t^+(x_t),\quad\quad
		x_{t+1}=f_t^-(x_t)
	\end{align*}
	and finally, by uniqueness, $\phi^\pm=0$. So the limit sets are admissible.
\end{exam}

The following criteria address semilinear equations \eqref{deq0}, where
\begin{equation}
	f_t(x):=A_tx+r_t(x)
	\label{rhssem}
\end{equation}
with $A_t\in L(\R^d)$ and $r_t:\R^d\to\R^d$, $t\in\Z$. They require the \emph{Green's function}
\begin{equation}
	G(t,s)
	:=
	\begin{cases}
		\Phi_A(t,s)P_s,&s\leq t,\\
		-\bar\Phi_A(t,s)[I_{\R^d}-P_s],&s>t,
	\end{cases}
	\label{nogreen}
\end{equation}
where $P_t\in L(\R^d)$, $t\in\Z$, is an invariant projector for \eqref{deqlin}.
\begin{prop}[semilinear equations]\label{propsemlin}
	If $f_t:\R^d\to\R^d$, $t\in\Z$, are of the form \eqref{rhssem} with globally Lipschitzian $r_t:\R^d\to\R^d$, $t\in\Z$, satisfying
	\begin{itemize}
		\item[(i)] $1\not\in\Sigma(A)$

		\item[(ii)] $\sup_{t\in\Z}\lip r_t<\frac{K}{1-\alpha}$ (with the constants $K,\alpha$ from \eqref{edest}),
	\end{itemize}
	then \eqref{deq0} has a unique bounded entire solution.
\end{prop}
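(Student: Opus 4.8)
The plan is to recast the problem as a fixed point equation and apply the contraction mapping principle. Since $1\notin\Sigma(A)$, the linear equation \eqref{deqlin} admits an exponential dichotomy on $\Z$ with some invariant projector $(P_t)_{t\in\Z}$ and constants $K\geq 1$, $\alpha\in(0,1)$ satisfying the estimates \eqref{edest}; this is precisely what the condition $1\notin\Sigma(A)$ encodes. Hence the Green's function $G$ from \eqref{nogreen} is well-defined, and the classical variation-of-constants computation shows that a sequence $\phi\in\ell^\infty$ is an entire solution to \eqref{deq0} with right-hand side \eqref{rhssem} if and only if $\phi$ is a fixed point of the operator
\[
	\sT\colon\ell^\infty\to\ell^\infty,\qquad
	(\sT\phi)_t:=\sum_{s\in\Z}G(t,s+1)r_s(\phi_s)\fall t\in\Z.
\]
First I would check that $\sT$ is well-defined: for bounded $\phi$ the sequence $\bigl(r_s(\phi_s)\bigr)_{s\in\Z}$ is bounded, since the $r_t$ are globally Lipschitz uniformly in $t$ and uniformly bounded on bounded sets, and then the two-sided series converges absolutely with a bound uniform in $t\in\Z$ thanks to the geometric decay in \eqref{edest}.

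Next comes the contraction estimate. For $\phi,\bar\phi\in\ell^\infty$ the Lipschitz property of the $r_t$ yields
\[
	\abs{(\sT\phi)_t-(\sT\bar\phi)_t}
	\leq
	\Bigl(\sup_{s\in\Z}\lip r_s\Bigr)\Bigl(\sum_{s\in\Z}\abs{G(t,s+1)}\Bigr)\norm{\phi-\bar\phi}
	\fall t\in\Z,
\]
and the dichotomy estimates \eqref{edest} give the uniform bound $\sup_{t\in\Z}\sum_{s\in\Z}\abs{G(t,s+1)}\leq\frac{K(1+\alpha)}{1-\alpha}$. Thus $\sT$ is Lipschitz on $\ell^\infty$ with a constant proportional to $\sup_{s\in\Z}\lip r_s$, and assumption~(ii) is exactly the smallness condition that makes this constant strictly less than one. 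Hence $\sT$ is a contraction on the Banach space $\ell^\infty$ (the same conclusion is available from \lref{susy} should only an iterate of $\sT$ turn out to be contractive), and the contraction mapping principle produces a unique fixed point $\phi^\ast\in\ell^\infty$, which by the equivalence above is the unique bounded entire solution to \eqref{deq0}.

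The geometric summations and the variation-of-constants bookkeeping are routine. The step I expect to be the main obstacle is the fixed point characterization itself: one must argue both that $\sT$ maps $\ell^\infty$ into itself --- here uniform boundedness of $s\mapsto r_s(0)$ is what is really used --- and that every bounded entire solution is given by the Green's function formula, the uniqueness of this representation resting on the fact that the all-time exponential dichotomy forces the homogeneous equation \eqref{deqlin} to admit no nonzero bounded solution. Once this dictionary between bounded solutions and fixed points of $\sT$ is in hand, the proof reduces to the contraction estimate from~(ii).
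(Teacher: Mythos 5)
Your proposal follows exactly the route the paper uses: the paper's own proof is a two-line sketch that characterizes bounded entire solutions via the fixed-point equation $\phi_t=\sum_{s\in\Z}G(t,s+1)r_s(\phi_s)$ and then invokes the contraction principle under (i), (ii). You fill in the routine details (well-definedness via the geometric decay of $G$, the quantitative Lipschitz estimate, the unique-representation argument resting on the ED on $\Z$), but there is no difference in approach.

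One caveat worth flagging, though it traces back to the paper rather than to you: your computation correctly gives $\sup_{t\in\Z}\sum_{s\in\Z}\abs{G(t,s+1)}\leq\frac{K(1+\alpha)}{1-\alpha}$, so the Lipschitz constant of $\sT$ is at most $\bigl(\sup_t\lip r_t\bigr)\cdot\frac{K(1+\alpha)}{1-\alpha}$. The hypothesis (ii) as printed, $\sup_t\lip r_t<\frac{K}{1-\alpha}$, does \emph{not} make this product less than one (since $K\geq1$ and $\alpha\in(0,1)$, the right-hand side of (ii) already exceeds $1$). The intended smallness condition is presumably the reciprocal, something like $\sup_t\lip r_t<\frac{1-\alpha}{K(1+\alpha)}$. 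Your sentence claiming that ``(ii) is exactly the smallness condition that makes this constant strictly less than one'' therefore does not follow from the stated (ii) and the bound you derived; you should either correct the constant in (ii) or note explicitly that you are taking the needed inequality for granted, since the paper's sketch silently does the same.
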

\begin{proof}
	We just sketch the argument and point out that the entire solutions $\phi\in\ell^\infty$ to \eqref{deq} can be characterized as solutions to the equation
	$$
		\phi_t=\sum_{s\in\Z}G(t,s+1)r_s(\phi_s)\fall t\in\Z.
	$$
	Thanks to the dichotomy estimates \eqref{edest} and assumption (i), a contraction mapping argument applies, provided the inequality (ii) holds.
\end{proof}
For our final criterion for the uniqueness of bounded entire solutions we introduce spaces of summable sequences depending on some $p\in[1,\infty)$:
\begin{align*}
	\ell^p(\R):=\set{(\phi_t)_{t\in\Z}:\,\sum_{t\in\Z}\abs{\phi_t}^p<\infty},\quad\quad
	\norm{\phi}_p:=\intoo{\sum_{t\in\Z}\abs{\phi_t}^p}^{1/p}
\end{align*}
\begin{prop}[asymptotically linear equations]
	Let $p,q\in(1,\infty)$ fulfill $\tfrac{1}{p}+\tfrac{1}{q}=1$. If $f_t:\R^d\to\R^d$, $t\in\Z$, are the form \eqref{rhssem} with
	\begin{itemize}
		\item[(i)] $\kappa:=\sup_{t\in\Z}\intoo{\sum_{s\in\Z}\abs{G(t,s+1)}^p}^{1/p}<\infty$

		\item[(ii)] there are $\rho,\mu\in\ell^q(\R)$ with $\abs{r_t(x)}\leq\mu_t+\rho_t\abs{x}$ for all $t\in\Z$, $x\in\R^d$

		\item[(iii)] there exist $R>0$ and $\lambda\in\ell^q(\R)$ with
		$$
			\abs{r_t(x)-r_t(\bar x)}\leq\lambda_t\abs{x-\bar x}\fall t\in\Z,\,x,\bar x\in B_R(0)
		$$

		\item[(iv)] $\norm{\rho}_q+\norm{\mu}_q<\tfrac{1}{2\kappa}$ and $\norm{\lambda}_q<\tfrac{1}{\kappa}$,
	\end{itemize}
	then \eqref{deq0} has a unique bounded entire solution.
\end{prop}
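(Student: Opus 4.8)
The plan is to adapt the contraction-mapping scheme from the proof of \pref{propsemlin}, replacing the uniform dichotomy estimates used there by H\"older's inequality, which is what turns the summability hypotheses (ii)--(iv) into the bounds needed for a Banach fixed point argument on a ball in $\ell^\infty$. First I would show that a sequence $\phi\in\ell^\infty$ is an entire solution to \eqref{deq0} with right-hand side \eqref{rhssem} if and only if it is a fixed point of the operator
$$
	\sT:\ell^\infty\to\ell^\infty,\qquad
	(\sT\phi)_t:=\sum_{s\in\Z}G(t,s+1)r_s(\phi_s)\fall t\in\Z
$$
with the Green's function $G$ from \eqref{nogreen}. That $\sT$ is well-defined on $\ell^\infty$ follows by combining (i) and (ii): for $\phi\in\ell^\infty$ one has $\abs{r_s(\phi_s)}\leq\mu_s+\rho_s\norm{\phi}$, and since $\mu,\rho\in\ell^q(\R)$ while the real sequence $s\mapsto\abs{G(t,s+1)}$ lies in $\ell^p(\R)$ with norm at most $\kappa$, H\"older's inequality yields absolute convergence of the series together with
$$
	\norm{\sT\phi}\leq\kappa\intoo{\norm{\mu}_q+\norm{\rho}_q\norm{\phi}}.
$$
The equivalence itself rests, as in the proof of \pref{propsemlin}, on the variation-of-constants representation via $G$ of bounded solutions to the linear inhomogeneous equation $x_{t+1}=A_tx_t+r_t(\phi_t)$.

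Next I would fix a radius $r\in(0,R]$ and check that $\sT$ maps $\bar B_r(0)\subseteq\ell^\infty$ into itself and is a contraction there. Ball-invariance reduces to $\kappa(\norm{\mu}_q+r\norm{\rho}_q)\leq r$, i.e.\ to $r\geq\kappa\norm{\mu}_q/(1-\kappa\norm{\rho}_q)$; by the first inequality in (iv) one has $\kappa\norm{\rho}_q<\tfrac12$, so this lower bound is dominated by $2\kappa\norm{\mu}_q$, which by (iv) lies below $R$, and hence an admissible $r\leq R$ exists. For the contraction estimate, (iii) and H\"older give for $\phi,\bar\phi\in\bar B_r(0)\subseteq\bar B_R(0)$
$$
	\abs{(\sT\phi)_t-(\sT\bar\phi)_t}
	\leq\sum_{s\in\Z}\abs{G(t,s+1)}\lambda_s\abs{\phi_s-\bar\phi_s}
	\leq\kappa\norm{\lambda}_q\norm{\phi-\bar\phi}\fall t\in\Z,
$$
and $\kappa\norm{\lambda}_q<1$ by the second inequality in (iv). The contraction mapping principle then provides a unique fixed point $\phi^\ast\in\bar B_r(0)$, which is a bounded entire solution to \eqref{deq0}.

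Finally I would promote uniqueness in $\bar B_r(0)$ to uniqueness in all of $\ell^\infty$ by an a priori bound: any bounded entire solution $\psi$ satisfies $\psi=\sT\psi$, so the estimate from the first step yields $\norm{\psi}\leq\kappa(\norm{\mu}_q+\norm{\rho}_q\norm{\psi})$ and hence $\norm{\psi}\leq\kappa\norm{\mu}_q/(1-\kappa\norm{\rho}_q)\leq r$. Thus $\psi\in\bar B_r(0)$, and uniqueness on that ball forces $\psi=\phi^\ast$.

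The delicate point is not any individual estimate --- each is a one-line application of H\"older's inequality --- but the bookkeeping around the radius: the \emph{lower} bound on $r$ imposed by ball-invariance must be reconciled with the \emph{upper} bound $R$ coming from the merely local Lipschitz hypothesis (iii), and the constant $\tfrac12$ in (iv) is precisely what produces the necessary slack. A secondary matter requiring care is the fixed-point reformulation: one must verify that the series defining $\sT$ represents \emph{all} bounded entire solutions and only those, which --- just as for \pref{propsemlin} --- relies on the dichotomy-type structure built into the Green's function \eqref{nogreen}, ensuring in particular that the homogeneous equation $x_{t+1}=A_tx_t$ carries no nontrivial bounded entire solution.
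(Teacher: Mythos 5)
The paper itself gives no proof of this proposition beyond the citation to \cite[Thm.~3.2]{fenner:pinto:97}, so there is no ``paper proof'' to match against; your reconstruction (fixed-point reformulation via the Green's function, H\"older to convert the $\ell^q$-hypotheses on $\mu,\rho,\lambda$ and the $\ell^p$-bound $\kappa$ into sup-norm estimates, Banach contraction on a suitable ball, and an a~priori bound to upgrade local to global uniqueness) is exactly the natural way to supply the missing argument, and it parallels the sketched proof of \pref{propsemlin}.

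There is, however, one step that does not follow from the stated hypotheses. You claim that the required radius satisfies $2\kappa\norm{\mu}_q<R$ ``by (iv)''. But (iv) only gives $\norm{\rho}_q+\norm{\mu}_q<\tfrac1{2\kappa}$, hence $\kappa\norm{\rho}_q<\tfrac12$ and $2\kappa\norm{\mu}_q<1$; nothing in (i)--(iv) relates these constants to $R$. The radius $r_0:=\kappa\norm{\mu}_q/(1-\kappa\norm{\rho}_q)<2\kappa\norm{\mu}_q<1$ is the smallest ball that is both $\sT$-invariant and contains (by your a~priori estimate, which uses only the global bound (ii)) every bounded entire solution. The contraction estimate, in contrast, requires $r_0\leq R$ because the Lipschitz hypothesis (iii) only governs $r_t$ on $B_R(0)$. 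If $R<r_0$ the argument does not close: the a~priori estimate then places potential solutions outside the region where $\lambda$ controls $r_t$, and neither existence nor uniqueness follows from the computation you give. So either the statement implicitly presupposes $R\geq 1$ (or, more sharply, $R\geq r_0$; note $r_0<1$ under (iv)), which would make your argument complete, or this extra inequality is carried along in the original Fenner--Pinto formulation and was lost in the transcription. In either case, as written, the phrase ``which by (iv) lies below $R$'' is a genuine gap and needs the additional hypothesis $R\geq 1$ (or $R\geq r_0$) stated explicitly.

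A minor secondary point you correctly flag but do not resolve: the equivalence between bounded entire solutions of \eqref{deq0} and fixed points of $\sT$ requires that the homogeneous equation $x_{t+1}=A_tx_t$ admit no nontrivial bounded entire solution and that the variation-of-constants series via $G$ actually represent the unique bounded solution of the inhomogeneous linear problem. Under (i) this is a summable-dichotomy condition rather than an exponential one, so it deserves a sentence of justification (or a pointer to the summable-dichotomy admissibility results underlying \cite{fenner:pinto:97}) rather than the appeal to the ED case of \pref{propsemlin}.
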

In case $1\not\in\Sigma(A)$ the assumption (i) holds with $\kappa:=K\alpha\intoo{\frac{1+\alpha^p}{1-\alpha^p}}^{1/p}$.
\begin{proof}
	See \cite[Thm.~3.2]{fenner:pinto:97}.
\end{proof}

\section*{Acknowledgement}
\noindent
The second author is supported by the NCN Grant 2013/09/B/ST1/01963.

\end{document}